\definecolor{darkred}{rgb}{0.5,0,0}
\definecolor{darkgreen}{rgb}{0,0.5,0}
\definecolor{darkblue}{rgb}{0,0,0.5}
\newcommand{\br}{\mathbb{R}}
\newcommand{\bz}{\mathbb{Z}}
\newcommand{\bn}{\mathbb{N}}
\newcommand{\bt}{\mathbb{T}}
\newcommand{\cc}{\mathcal{C}}
\newcommand{\ch}{\mathcal{H}}
\newcommand{\ci}{\mathcal{I}}
\newcommand{\ce}{\mathcal{E}}
\newcommand{\cj}{\mathcal{J}}
\newcommand{\dd}{\mathrm{d}}
\newcommand{\dL}{\mathrm{L}}
\newtheorem{theo}{Theorem}
\newtheorem{lem}{Lemma} [section]
\newtheorem{defi}{Definition} [section]
\numberwithin{equation}{section}
\newtheorem{defi-pro}{Definition-Proposition} [section]
\newcounter{numero}
\newcounter{numerob}
\newcounter{numerobb}
\title{Stable ground states for the HMF Poisson model}
\date{\today}
\author[M. Fontaine]{Marine Fontaine}
\address[Marine Fontaine]{IRMAR, ENS Rennes, CNRS, Université de Rennes 1, 
Campus de Beaulieu, 263 avenue du G\'en\'eral Leclerc CS 74205 
35042 Rennes Cedex\\FRANCE}
\email{marine.fontaine@ens-rennes.fr}
\author[M. Lemou]{Mohammed Lemou}
\address[Mohammed Lemou]{CNRS, IRMAR, Université de Rennes 1,
Campus de Beaulieu, 263 avenue du G\'en\'eral Leclerc CS 74205 
35042 Rennes Cedex\\FRANCE}
\email{mohammed.lemou@univ-rennes1.fr}
\author[F. M\'ehats]{Florian M\'ehats}
\address[Florian M\'ehats]{IRMAR, Université de Rennes 1, CNRS,
Campus de Beaulieu, 263 avenue du G\'en\'eral Leclerc CS 74205 
35042 Rennes Cedex\\FRANCE}
\email{florian.mehats@univ-rennes1.fr}
\begin{document}

%$ \, $ 

\begin{abstract} 
In this paper we prove the nonlinear orbital stability of a large class of steady states solutions to the Hamiltonian Mean Field (HMF) system
with a Poisson interaction potential. These steady states are obtained as minimizers of an energy functional under {\em one, two or infinitely many constraints.}
The singularity of the Poisson potential  prevents from a direct run of the general strategy in \cite{Stability_VP,Lemou_seul} which was based on  generalized rearrangement techniques, and which has been recently extended to the case of  the usual (smooth) cosine potential \cite{Stability_HMFcos}.
Our strategy is rather based on variational techniques. However, due to the boundedness of the space domain, our variational problems do not enjoy  the usual scaling invariances which are, in general, very important in the analysis of variational problems. To replace these scaling arguments, we introduce new transformations which, although specific to our context, remain somehow  in the same spirit of rearrangements tools introduced in the references above. In particular, these transformations allow for the incorporation of an arbitrary number of constraints, and yield a stability result for a large class of steady states.
\end{abstract}

\maketitle
%\vspace{-0.1cm}
%\tableofcontents

%{\small \parskip=1pt
%\tableofcontents
%}

%\newpage
\section{Introduction and main results}
\subsection{The HMF Poisson model}
The Hamiltonian mean-field (HMF) model \cite{Messer1982, Antoni_Ruffo} describes the evolution of particles moving on a circle under the action of a given potential. The most popular model is the HMF system with an infinite range attractive cosine potential. Although this model has no direct physical relevance, it is commonly used in the physics literature as a toy model to describe some gravitational systems. In particular, it is involved in the study of non equilibrium phase transitions \cite{chavanis1, staniscia, antoniazzi, yama2}, of travelling clusters \cite{PhysRevE.79.036208, yama1} or of relaxation processes \cite{yamaguchi:hal-00008414, barre:hal-00018773, Chavanis-Vatteville-Bouchet}.  Many results exist concerning the stability of steady states solutions to the HMF system with a cosine potential. Some are about the dynamics of perturbations of inhomogeneous steady states \cite{BarreOlivettiYamaguchiDynamics,BarreOlivettiYamaguchidamping} and others deal with the linear stability of steady states \cite{chavanis1, ogawa, BarreYamaguchi}. In \cite{Stability_HMFcos}, the nonlinear stability of inhomogeneous steady states that satisfy an explicit criterion is proved. 
In the case of homogeneous (i.e. with dependence in velocity only) steady states and a cosine interaction potential , a nonlinear Landau damping analysis has been investigated for the HMF model in Sobolev spaces  \cite{faou_rousset}. 

There exist other kinds of potentials for the HMF model like the Poisson potential or the screened Poisson potential \cite{Chavanis-Delfini, Nagai}. In this paper, we study the orbital stability of ground states of a HMF model with a Poisson potential. This model is closer to the Vlasov-Poisson system than the HMF model with a cosine potential. The Poisson interaction potential is however more singular, which induces serious technical difficulties and prevent from a complete application of the strategy introduced in \cite{Stability_VP} for the Vlasov-Poisson system or in  \cite{Stability_HMFcos} for the HMF model with a cosine potential. For this reason, our analysis is based on variational methods. A general approach is introduced allowing to prove the nonlinear stability of a large class of steady states thanks to the study of variational problems with one, two or infinitely many constraints. Notice that, in our case, since the domain of the position is bounded and since the number of constraints may be infinite,  scaling arguments like in \cite{Manev, lemou-mehats-raphael} cannot be used. New transformations will be introduced to bypass these technical difficulties.

The HMF Poisson system reads
\begin{align}\label{HMF}
\begin{cases}
&\partial_t f+v\partial_{\theta}f-\partial_{\theta}\phi_{f}\partial_v f=0, \quad (t, \theta, v) \in \br_+\times\bt\times \br,\\
&f(t=0,\theta,v)=f_{init}(\theta,v)\geq 0,
\end{cases}
\end{align} 
where $\bt$ is the flat torus $\br/2\pi\bz$ and $f=f(t,\theta,v)$ is the nonnegative distribution function. The self-consistent potential $\phi_{f}$ associated to a distribution function $f$ is defined for $\theta \in \bt$ by 
\begin{equation}\label{Poisson}
\partial_\theta^2 \phi_f=\rho_f-\frac{\Vert f\Vert_{\dL^1}}{2\pi}, \qquad \rho_f(\theta)=\int_{\br}f(\theta,v)\dd v
\end{equation}
or, equivalently,
\begin{equation}\label{phi}
\phi_f(\theta)=\int_0^{2\pi}W(\theta-\tilde{\theta})\rho_f(\tilde{\theta})\dd\tilde{\theta},
\end{equation}
where the function $W$ is defined on $\br$ by 
\[W \text{ is } 2\pi\text{-periodic }, \quad \forall \theta\in[-\pi,\pi], \quad W(\theta)= -\frac{\theta^2}{4\pi}+\frac{\vert\theta\vert}{2}-\frac{\pi}{6}.\]
Note that $W$ has a zero average, is continuous on $\br$ and that $\phi_f$ is  $2\pi$-periodic with zero average : $\int_0^{2\pi}\phi_f(\theta)\dd\theta=0$.

Some quantites are invariant during the evolution:
\begin{itemize}
\item the Casimir functions: $\iint j(f(\theta,v))\dd\theta\dd v$, for any function $j\in \cc^1(\br_+)$ such that $j(0)=0$;
\item the nonlinear energy: 
\begin{equation}\label{Hamiltonien}
\ch(f)=\iint \frac{v^2}{2}f(\theta,v)\dd\theta\dd v-\frac{1}{2}\int_0^{2\pi}\phi_{f}'(\theta)^2\dd \theta;
\end{equation}
\item the total momentum: $\iint vf(\theta,v)\dd \theta\dd v$.
\end{itemize}
Moreover, the HMF system satisfies the Galilean invariance, that is, if $f(t, \theta,v)$ is a solution, then so is $f(t, \theta+v_0 t,v+v_0)$, for all $v_0\in \br$.\\

In Section \ref{1contrainte}, we prove the orbital stability of stationary states which are minimizers of a one-constraint variational problem. It is obtained for two kinds of steady states: the compactly supported ones and the  Maxwell-Boltzmann (non compactly supported) distributions \cite{Chavanis-Vatteville-Bouchet}. In Section \ref{2contraintes}, we prove the orbital stability of compactly supported steady states which are minimizers of a two constraints problem. In particular, this covers the case of compactly supported steady states which are minimizers of a one constraint problem. Lastly, in Section \ref{infini_contraintes}, we prove the orbital stability of the set of all the minimizers of a problem with an infinite number of constraints. This set of minimizers contains the minimizers of one and two constraints problems. However, at this stage, our strategy only provides a collective stability result (stability of the set of minimizers) for the minimizers of this problem with infinite number of constraints, instead of the individual stability of each minimizer which is only obtained for the one and two constraints variational problems.

\subsection{Statement of the results}\label{result}
\subsubsection{One-constraint problem}
First, in Section \ref{1contrainte}, we will show the orbital stability of stationary states which are minimizers of the following variational problem
\vspace{-0.2cm}
\begin{equation} \label{I(M)}
\mathcal{I}(M)=\underset{f\in E_j,\Vert f\Vert_{\mathrm{L}^1}=M}{\inf}\mathcal{H}(f)+\iint j(f(\theta,v))\mathrm{d}\theta\mathrm{d}v.
\end{equation}
The constant $M>0$ is given and $E_j$ is the energy space:
\begin{equation}\label{Ej1}
E_j=\left\{f\geq 0, \Vert (1+v^2) f\Vert_{\mathrm{L}^1}<+\infty, \left\vert\iint j(f(\theta,v))\mathrm{d}\theta\mathrm{d}v\right\vert<+\infty\right\},
\end{equation}
where $j: \mathbb{R}_+ \to \mathbb{R}$ is either the function defined by $j(t)=t\ln(t)$ for $t>0$ and $j(0)=0$ or a function $j$ satisfying the following assumptions
\begin{enumerate}
\item[(H1)] $j\in\mathcal{C}^2(\br_+^*)$; $j(0)=j'(0)=0$ and $j''(t)>0$ for all $t>0$,
\item[(H2)] $\underset{t\to+\infty}{\lim}\frac{j(t)}{t}=+\infty$,
\end{enumerate}
Note that $j(t)=t\ln(t)$ satisfies (H2) but not (H1) since $j'(0)\neq 0$ in this case. 
 
\begin{defi}
We shall say that a sequence $f_n$ converges to $f$ in $E_j$ and we shall write $f_n \xrightarrow{E_j} f$ if $\Vert (1+v^2)(f_n - f) \Vert_{\mathrm{L}^1}\underset{n\to+\infty}{\longrightarrow}0$ and $\iint j( f_n(\theta,v))\mathrm{d}\theta\,\mathrm{d}v \underset{n\to+\infty}{\longrightarrow}\iint j( f(\theta,v))\,\mathrm{d}\theta\mathrm{d}v$.
\end{defi}
In our first result, we establish the existence of ground states for the HMF Poisson model (\ref{HMF}) which are minimizers of the variational problem (\ref{I(M)}). This theorem will be proved in Section \ref{Proof1}.
\begin{theo}[Existence of ground states]\label{Existence_minimizers1} Let $j$ be the function $j(t)=t\ln(t)$ or a function satisfying (H1) and (H2). We have
\begin{enumerate}
\item In both cases, the infimum (\ref{I(M)}) exists and is achieved at a minimizer $f_0$ which is a steady state of (\ref{HMF}).
\item If $j$ satisfies (H1) and (H2), any minimizer $f_0$ of (\ref{I(M)}) is continuous, compactly supported,  piecewise $\cc^1$ and takes the form
\[f_0(\theta,v)=(j')^{-1}\left(\lambda_0-\frac{v^2}{2}-\phi_{f_0}(\theta)\right)_+ \text{ for some } \lambda_0\in \br.\]
The function $(.)_+$ is defined by $(x)_+=x$ if $x\geq 0$, 0 else.
\item If $j(t)=t\ln(t)$, any minimizer $f_0$ of (\ref{I(M)}) is a $\cc^{\infty}$ function which takes the form
\vspace{-0.2cm}
\[f_0(\theta,v)=\exp{\left(\lambda_0-\frac{v^2}{2}-\phi_{f_0}(\theta)\right)} \text{ for some } \lambda_0\in \br.\]
\end{enumerate}
%\noindent So in both cases, any minimizer of (\ref{I(M)}) is a steady state of (\ref{HMF}).
\end{theo}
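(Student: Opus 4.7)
The plan is to follow a direct method in the calculus of variations, combining weak $L^1$ compactness of minimizing sequences with the regularizing effect of the Poisson convolution, and then to derive the Euler--Lagrange equation by a standard Lagrange multiplier. First I would show that $\mathcal{I}(M)$ is finite. The Green-kernel representation $\phi'_f=W'\ast\rho_f$ together with $W'\in L^\infty(\bt)$ gives a uniform bound $\|\phi'_f\|_{L^2}^2\le 2\pi\|W'\|_\infty^2 M^2$ on the constraint set, so the negative potential part of $\mathcal{H}$ is harmless. For $j$ satisfying (H1), the assumptions $j(0)=j'(0)=0$ and $j''>0$ force $j\geq 0$; for $j(t)=t\ln t$, a relative-entropy comparison with a normalized Gaussian in $v$ yields $\iint f\ln f\ge C(M)-\tfrac12\iint v^2 f$, which absorbs half of the kinetic energy.

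Given a minimizing sequence $(f_n)$, the previous bounds together with $\mathcal{H}(f_n)$ bounded from above produce uniform control of both $\iint v^2 f_n$ and $\iint j(f_n)$. The $v^2$-moment bound gives tightness in $v$ (the torus being already compact), while assumption (H2) in the first case, or the superlinearity of $t\ln t$ in the second, provides equi-integrability via de la Vall\'ee--Poussin. By Dunford--Pettis, up to a subsequence $f_n\rightharpoonup f_0$ weakly in $L^1(\bt\times\br)$; testing against the constant $1\in L^\infty$ preserves the mass, so $\|f_0\|_{L^1}=M$ and the constraint is not lost.

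To pass to the limit in the energy, $\rho_n\rightharpoonup\rho_0$ in $L^1(\bt)$, and since $W,W'\in L^\infty(\bt)$, testing weak convergence against these kernels yields pointwise convergence of $\phi_{f_n}$ and $\phi'_{f_n}$; combined with the uniform pointwise bounds $\|\phi_{f_n}\|_\infty+\|\phi'_{f_n}\|_\infty\le CM$, dominated convergence on the compact torus upgrades this to strong convergence in every $L^p(\bt)$. Thus the potential energy is continuous along the sequence. The kinetic term and the Casimir entropy are weakly lower semicontinuous (the former by the truncation $v^2\wedge R$ and monotone convergence, the latter by convexity of $j$), so $\mathcal{H}(f_0)+\iint j(f_0)\le\liminf_n[\mathcal{H}(f_n)+\iint j(f_n)]=\mathcal{I}(M)$, which means $f_0$ is a minimizer.

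Finally, the Euler--Lagrange equation comes from perturbations $f_0+\varepsilon h$ that preserve mass and the positivity constraint: pointwise $j'(f_0)+v^2/2+\phi_{f_0}=\lambda_0$ on $\{f_0>0\}$ and $\lambda_0-v^2/2-\phi_{f_0}\le j'(0^+)$ elsewhere. Under (H1)--(H2), $j'(0^+)=0$ and $j'$ is a strictly increasing bijection from $[0,\infty)$ onto $[0,\infty)$, giving $f_0=(j')^{-1}((\lambda_0-v^2/2-\phi_{f_0})_+)$; continuity of $\phi_{f_0}$ on the compact torus then forces continuity, compact support in $v$, and piecewise $C^1$ regularity of $f_0$. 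For $j(t)=t\ln t$, $(j')^{-1}(s)=e^{s-1}$ is positive on all of $\br$, so the Maxwell--Boltzmann formula holds with no cutoff, and since $\rho_{f_0}(\theta)=\sqrt{2\pi}\,e^{\lambda_0-\phi_{f_0}(\theta)}$ is a smooth function of $\phi_{f_0}$, an elliptic bootstrap on the Poisson equation $\phi''_{f_0}=\rho_{f_0}-M/(2\pi)$ delivers $C^\infty$ regularity. The main obstacle I expect is preventing loss of mass at infinity in $v$: the whole argument hinges on the \emph{a priori} bound on $\iint v^2 f_n$, which itself requires a delicate interplay between the negative potential term, the kinetic term, and the entropy, and is the reason why the Gaussian comparison (for $t\ln t$) or the sign of $j$ (under (H1)) plays a pivotal role from the very first estimate.
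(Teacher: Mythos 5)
Your existence argument follows essentially the same route as the paper: finiteness of $\ci(M)$ from the sign of $j$ under (H1) (resp.\ a Gaussian relative-entropy lower bound for $t\ln t$), Dunford--Pettis compactness from the bounds on $\Vert v^2 f_n\Vert_{\dL^1}$ and $\iint j(f_n)$, strong convergence of $\phi_{f_n}$ and $\phi_{f_n}'$ by testing the weak $\dL^1$ convergence against the bounded kernels $W$, $W'$, and weak lower semicontinuity of the remaining terms. Where you genuinely diverge is the Euler--Lagrange step: you propose the classical first-variation argument with perturbations $f_0+\varepsilon h$ and a variational inequality on $\{f_0=0\}$, whereas the paper never differentiates the functional. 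Instead it introduces, for each potential $\phi$, the profile $F^{\phi}=(j')^{-1}(\lambda-\tfrac{v^2}{2}-\phi)_+$ normalized to mass $M$ (Lemma \ref{F^phi1}), the reduced functional $\cj(\phi)$ of (\ref{J1}), and the two exact identities $\cj(\phi)=J(F^{\phi})+\tfrac12\Vert\phi'_{F^{\phi}}-\phi'\Vert_{\dL^2}^2$ and $J(f)=\cj(\phi_f)+\iint\bigl(j(f)-j(F^{\phi_f})-j'(F^{\phi_f})(f-F^{\phi_f})\bigr)$; for a minimizer the convexity defect must vanish, and $j''>0$ forces $f_0=F^{\phi_{f_0}}$. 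The paper's route buys two things that your sketch leaves to be checked: (i) it requires no justification of $\frac{d}{d\varepsilon}\iint j(f_0+\varepsilon h)=\iint j'(f_0)h$, which is delicate because a minimizer is not a priori bounded and $j'$ may grow fast, so admissible perturbations must be localized on sets $\{\delta<f_0<K\}$ and the a.e.\ constancy of $j'(f_0)+\tfrac{v^2}{2}+\phi_{f_0}$ pieced together afterwards; (ii) the same construction $F^{\phi}$ is reused verbatim to prove compactness of minimizing sequences and hence stability, so it is not merely a device for the Euler--Lagrange equation. One further point you gloss over: for $j(t)=t\ln t$ the integrand changes sign, so the standard lower semicontinuity result for nonnegative convex integrands does not apply directly to $\iint f_n\ln f_n$; the paper needs a separate argument (Lemma \ref{sci_bis}, proved in the appendix) that splits off the region $\{f_n\le\lambda f_1\}$ against a reference Gaussian. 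Your proposal is correct in outline, but these two technical steps are exactly where the work is hidden.
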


Our second result concerns the orbital stability of the above constructed ground states under the action of the HMF Poisson flow. But first and foremost, we need to prove the uniqueness of the minimizers under equimeasurability condition. To do that, first recall the definition of the equimeasurability of two functions.

\begin{defi}
Let $f_1$ and $f_2$ be two nonnegative functions in $\dL^1([0,2\pi]\times\br)$. The functions $f_1$ and $f_2$ are said equimeasurable, if and only if, $\mu_{f_1}=\mu_{f_2}$ where $\mu_f$ denotes the distribution function of $f$, defined by
\begin{equation}\label{mu_f}
\mu_f(s)=\vert\{(\theta,v)\in[0,2\pi]\times\br: f(\theta,v)>s\}\vert,\, \text{ for all }s\geq0,
\end{equation}
and $\vert A\vert$ stands for the Lebesgue measure of a set $A$.
\end{defi}
\begin{lem}[Uniqueness of the minimizer under equimeasurability condition]\label{Unicite1}
Let $f_1$ and $f_2$ be two equimeasurable steady states of (\ref{HMF}) which minimize (\ref{I(M)}) with $j(t)=t\ln(t)$ or with $j$ given by a function satisfying (H1) and (H2). Then the steady states $f_1$ and $f_2$ are equal up to a shift in $\theta$.
\end{lem}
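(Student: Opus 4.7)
My plan is to reduce the lemma to a one-dimensional ODE for the effective potential $\psi_i := \lambda_i - \phi_{f_i}$, then use equimeasurability to match a conserved energy, and finally invoke uniqueness in the phase portrait. By Theorem~\ref{Existence_minimizers1}, both minimizers take the Euler--Lagrange form
\[ f_i(\theta,v) = F\bigl(\psi_i(\theta) - v^2/2\bigr), \qquad i = 1,2, \]
with $F := (j')^{-1}(\cdot)_+$ in the compactly supported case and $F:=\exp$ in the Maxwell--Boltzmann case. In both cases $F$ is strictly increasing on its positivity set, so for every $s>0$
\[ \mu_{f_i}(s) = \bigl|\{(\theta,v) : v^2 < 2(\psi_i(\theta)-F^{-1}(s))\}\bigr| = 2\sqrt{2}\int_0^{2\pi}\sqrt{(\psi_i(\theta)-F^{-1}(s))_+}\,\dd\theta. \]

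The hypothesis $\mu_{f_1}=\mu_{f_2}$, after the change of variable $a = F^{-1}(s)$ and a layer-cake rewriting in terms of $m_i(a) := |\{\theta\in\bt : \psi_i(\theta)>a\}|$, becomes the Abel-type identity
\[ \int_a^{+\infty}\frac{m_1(t)-m_2(t)}{\sqrt{t-a}}\,\dd t = 0 \]
for every admissible $a$. Injectivity of the Abel transform yields $m_1 = m_2$ on the relevant range; in particular $\max\psi_1 = \max\psi_2$.

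Setting $R(x) := \int_\br F(x-v^2/2)\,\dd v$, the Poisson equation $\partial_\theta^2\phi_{f_i} = \rho_{f_i} - M/(2\pi)$ combined with $\rho_{f_i} = R(\psi_i)$ and $\psi_i = \lambda_i - \phi_{f_i}$ gives the autonomous second-order ODE
\[ \psi_i''(\theta) = -R(\psi_i(\theta)) + \frac{M}{2\pi},\qquad \theta\in\bt. \]
Multiplication by $\psi_i'$ produces the first integral
\[ \tfrac12\psi_i'(\theta)^2 + G(\psi_i(\theta)) - \tfrac{M}{2\pi}\psi_i(\theta) \equiv C_i, \qquad G(x) := \int_0^x R(y)\,\dd y. \]
Evaluating at a point where $\psi_i$ attains its maximum (where $\psi_i' = 0$) and using $\max\psi_1 = \max\psi_2$ yields $C_1 = C_2$. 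Thus $(\psi_i,\psi_i')$ traces the \emph{same} closed level curve of the Hamiltonian $E(x,y)=y^2/2+G(x)-Mx/(2\pi)$ in the phase plane with identical vector field; uniqueness for autonomous ODEs then forces $\psi_2(\cdot) = \psi_1(\cdot + \theta_0)$ for some $\theta_0\in\bt$. Averaging this identity and using $\int_0^{2\pi}\phi_{f_i}\,\dd\theta = 0$ gives $\lambda_1 = \lambda_2$, hence $\phi_{f_2}(\theta) = \phi_{f_1}(\theta+\theta_0)$ and finally $f_2(\theta,v) = f_1(\theta+\theta_0, v)$.

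The main obstacle is the transfer of equimeasurability in the compactly supported case, where $F^{-1}$ is only defined on $(0,+\infty)$, so the Abel inversion recovers $m_i$ only on $\{t>0\}$. This is nonetheless enough for the phase-plane step, since only the equality of the maxima (which is positive as $M>0$) enters there; moreover, on the vacuum set $\{\psi_i\leq 0\}$ the ODE degenerates to $\psi_i'' = M/(2\pi)$ with $G$ vanishing, so the first integral is preserved across the matching points $\psi_i = 0$ and the phase-plane uniqueness argument extends smoothly to all of $\bt$.
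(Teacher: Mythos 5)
Your proof is correct and follows essentially the same route as the paper: reduce to the Euler--Lagrange form, use equimeasurability to match the extremum of the effective potential $\psi_i$, and conclude by uniqueness for the autonomous second-order ODE satisfied by $\psi_i$ (the paper's Cauchy--Lipschitz step). Your Abel-inversion detour (the paper just compares $\Vert f_1\Vert_{\dL^{\infty}}=\Vert f_2\Vert_{\dL^{\infty}}$ to get equality of the extrema of $\psi_i$) and the first integral (redundant once you know both trajectories pass through the common phase point $(e_0,0)$) are heavier than necessary but harmless.
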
 

This lemma will be proved in Section \ref{Unicite 1}. Now, using the compactness of all the minimizing sequences of (\ref{I(M)}) (which will be obtained along the proof of Theorem \ref{Orbital_stability1} in Section \ref{OS1}) and the uniqueness result given by Lemma \ref{Unicite1}, we can get the following stability result. It will be proved in Section \ref{OS1}.

\begin{theo}[Orbital stability of ground states]\label{Orbital_stability1} Consider the variational problem (\ref{I(M)}) with $j(t)=t\ln(t)$ or with $j$ given by a function satisfying (H1) and (H2). In both cases, we have the following result. For all $M>0$, any steady state $f_0$ of (\ref{HMF}) which minimizes (\ref{I(M)}) is orbitally stable under the flow (\ref{HMF}). $\!\!\!$ More precisely for all $\varepsilon\!>\!0$, there exists $\eta(\varepsilon)\!>\!0$ such that the following holds true. Consider $f_{init}\in E_j$  satisfying $\Vert (1+v^2)( f_{init}-f_0)\Vert_{\dL^1}<\eta(\varepsilon)$ and $\vert\iint j(f_{init})-\iint j(f_0)\vert < \eta(\varepsilon)$. Let $f(t)$ be a weak global solution to (\ref{HMF}) on $\br_+$ with initial data $f_{init}$ such that the Casimir functions are preserved during the evolution and that $\ch(f(t))\leq \ch(f_{init})$. Then there exists a translation shift $\theta(.)$ with values in $[0,2\pi]$ such that $\forall t\in\br_+^*$, we have
\[\Vert(1+v^2) (f(t,\theta+\theta(t),v)-f_0(\theta,v))\Vert_{\dL^1}<\varepsilon.\]
\end{theo}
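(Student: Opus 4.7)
The plan is to argue by contradiction, combining the compactness up to translations of minimizing sequences of (\ref{I(M)}) (to be established along the proof of Theorem \ref{Existence_minimizers1}) with the uniqueness Lemma \ref{Unicite1}. Suppose the conclusion fails: there exist $\varepsilon_0>0$, a sequence of initial data $f_n^{init}\in E_j$ with $f_n^{init}\xrightarrow{E_j} f_0$, and associated weak global solutions $f_n(t)$ preserving the Casimirs and verifying $\mathcal{H}(f_n(t))\le \mathcal{H}(f_n^{init})$, together with times $t_n\ge 0$ such that, for every $\theta\in[0,2\pi]$,
\[
\bigl\|(1+v^2)\bigl(f_n(t_n,\theta+\cdot,\cdot)-f_0\bigr)\bigr\|_{\dL^1}\ge \varepsilon_0.
\]

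First, I would show that $f_n(t_n)$ is, up to a harmless rescaling, a minimizing sequence of (\ref{I(M)}). The functional $f\mapsto \mathcal{H}(f)+\iint j(f)$ is continuous on $E_j$: the kinetic term and the $j$-term are so by definition of the convergence in $E_j$, while the potential term is controlled by standard elliptic estimates applied to (\ref{Poisson}), using $\|\rho_f\|_{\dL^1}=\|f\|_{\dL^1}$ and the uniform velocity moment. Hence $\mathcal{H}(f_n^{init})+\iint j(f_n^{init})\to \mathcal{I}(M)$. Conservation of mass ($j(t)=t$ is a Casimir) yields $\|f_n(t_n)\|_{\dL^1}\to M$; conservation of the Casimir associated with $j$ gives $\iint j(f_n(t_n))\to \iint j(f_0)$; and the energy inequality combined with the above limits produces $\mathcal{H}(f_n(t_n))+\iint j(f_n(t_n))\le \mathcal{I}(M)+o(1)$. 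Replacing $f_n(t_n)$ by $\alpha_n f_n(t_n)$ with $\alpha_n=M/\|f_n(t_n)\|_{\dL^1}\to 1$ restores the constraint $\|\cdot\|_{\dL^1}=M$ without modifying the value of the functional at leading order, so that the rescaled sequence is a genuine minimizing sequence for (\ref{I(M)}).

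Next, the compactness (up to translations) of minimizing sequences of (\ref{I(M)}) produces a subsequence (still denoted by $n$), shifts $\theta_n\in[0,2\pi]$, and a minimizer $\tilde f_0$ of (\ref{I(M)}) such that $f_n(t_n,\cdot+\theta_n,\cdot)\xrightarrow{E_j}\tilde f_0$. Preservation of all Casimirs along the flow gives the identity of distribution functions $\mu_{f_n(t_n)}=\mu_{f_n^{init}}$. Passing to the limit in the strong $\dL^1$ topology on both sides (and using that translation does not alter $\mu_f$) yields $\mu_{\tilde f_0}=\mu_{f_0}$ at every continuity point, hence everywhere by monotonicity. Therefore $\tilde f_0$ and $f_0$ are equimeasurable minimizers of (\ref{I(M)}), and Lemma \ref{Unicite1} forces $\tilde f_0(\cdot,\cdot)=f_0(\cdot+\theta^\ast,\cdot)$ for some $\theta^\ast\in[0,2\pi]$. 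Absorbing $\theta^\ast$ into $\theta_n$ directly contradicts the lower bound assumed at the outset, which proves the theorem.

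The main obstacle is the compactness result for minimizing sequences of (\ref{I(M)}), which must be proved separately in the existence part of the analysis. Scaling arguments are not available here because the position variable lives on the bounded torus $\bt$, and the rearrangement techniques of \cite{Stability_VP,Stability_HMFcos} cannot be transposed directly because of the lack of smoothness of the Poisson kernel $W$. One needs instead a priori bounds on $\|\phi_{f_n}'\|_{\dL^2}$ along a minimizing sequence, the exclusion of vanishing and splitting of mass through the $\dL^1$ and $j$-constraints, and an upgrade of the weak $\dL^1$ convergence of the density $\rho_{f_n}$ to the strong convergence required by the $E_j$ topology, which is precisely where the new transformations mentioned in the introduction are expected to enter.
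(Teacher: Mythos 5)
Your Step‑1 architecture is exactly the paper's: argue by contradiction, show that $f^n(t_n)$ is (after restoring the mass constraint) a minimizing sequence of (\ref{I(M)}), invoke compactness of minimizing sequences in $E_j$, pass to the limit in the distribution functions to get an equimeasurable limiting minimizer, and conclude with Lemma \ref{Unicite1}. Two points, however, need attention.

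First, the compactness of minimizing sequences is not an external input here: in the paper it is Step 2 of the proof of this very theorem, and it is the technical heart of the argument. Your sketch of it is both missing and somewhat misdirected. There is no vanishing/splitting issue to exclude (the $\theta$-domain is bounded and the velocity moment bound prevents escape to infinity, as the paper notes when applying Dunford--Pettis), and the ``new transformations'' $F^{\phi}$, $f^{*\phi}$ are not what drives the one-constraint compactness. The actual mechanism is: (i) strong $H^1$ convergence of the potentials turns $J(f_n)\to J(f_0)$ into convergence of the \emph{sum} $\iint \frac{v^2}{2}f_n+\iint j(f_n)$ to the corresponding quantity for $f_0$, and weak lower semicontinuity of each convex piece separately (Lemma \ref{sci_bis} for $t\ln t$) then forces each piece to converge individually; (ii) strong $\dL^1$ convergence of $f_n$ itself is obtained from the Csiszar--Kullback inequality when $j(t)=t\ln t$, and from a Taylor expansion of $j$ around $f_0$ combined with $j''>0$ and the Brezis--Lieb lemma when $j$ satisfies (H1)--(H2); (iii) Brezis--Lieb again upgrades $\iint v^2 f_n\to\iint v^2 f_0$ to $\Vert v^2(f_n-f_0)\Vert_{\dL^1}\to 0$. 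Without some version of (ii) your proof is incomplete.

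Second, your multiplicative renormalization $\alpha_n f^n(t_n)$ with $\alpha_n=M/\Vert f^n(t_n)\Vert_{\dL^1}$ does not obviously leave the functional unchanged at leading order: for a general convex $j$ satisfying only (H1)--(H2), when $\alpha_n>1$ convexity gives $j(\alpha_n t)\geq \alpha_n j(t)$, i.e.\ the wrong-sided bound, and without a doubling condition such as (H3) one cannot control $\iint j(\alpha_n f_n)-\iint j(f_n)$. The paper instead uses the velocity dilation $\tilde g_n(\theta,v)=g_n(\theta,v/\lambda_n)$ with $\lambda_n=M/\Vert g_n\Vert_{\dL^1}$, under which all three terms of $J$ scale by explicit powers of $\lambda_n\to 1$ (in particular $\iint j(\tilde g_n)=\lambda_n\iint j(g_n)$ exactly), so the rescaled sequence is manifestly minimizing. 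You should adopt that dilation or restrict your rescaling argument to the cases where it can be justified.
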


%%%%%%%%%%%%%%%%%Cas deux contraintes%%%%%%%%%%%%%%%%%%%%%%%%%%
\subsubsection{Two-constraints problem}
In Section \ref{2contraintes}, we will show the orbital stability of stationary states which are minimizers of the following variational problem 
\vspace{-0.1cm}
\begin{equation}\label{I(M1,Mj)}
\ci(M_1,M_j)=\underset{\underset{\Vert f\Vert_{\mathrm{L}^1}=M_1, \Vert j(f) \Vert_{\mathrm{L}^1}=M_j }{f\in E_j}}{\inf}\mathcal{H}(f)
\end{equation}
where $E_j$ is the same energy space as above and the function $j$ satisfies (H1) and (H2) together with the following additional assumption

\begin{itemize}
\item[(H3)] There exist  $p, q >1$ such that $p\leq \frac{tj'(t)}{j(t)}\leq q,$  for t>0,
\end{itemize} 
Note that $j$ is a nonnegative function. The first result of this part is the following theorem which will be proved in Section \ref{Proof2}.

\begin{theo}[Existence of ground states]\label{Existence2} Let $j$ be a function satisfying (H1), (H2) and (H3). We have
\begin{enumerate}
\item $\!\!\!\!$ The infimum (\ref{I(M1,Mj)}) exists and is achieved at a minimizer $f_0$ which is a steady state of (\ref{HMF});
\item $\!\!\!\!$ Any steady state $f_0$ obtained as a minimizer of (\ref{I(M1,Mj)}) is continuous, compactly supported, piecewise $\cc^1$ and takes the form
\begin{equation}\label{forme}
f_0(\theta,v)=(j')^{-1}\left(\frac{\frac{v^2}{2}+\phi_{f_0}(\theta)-\lambda_0}{\mu_0}\right)_+ \text{ where }(\lambda_0,\mu_0)\in \br\times\br_-^* ;
\end{equation}
\item $\!\!\!\!$ The associated density $\rho_{f_0}\!$ is $\!\!\!$ continuous and the associated potential $\phi_{f_0}$ is $\cc^2$ on $\bt$.
\end{enumerate}
\end{theo}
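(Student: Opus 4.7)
My plan is to prove existence by the direct method of calculus of variations, then derive the Euler--Lagrange equation to obtain the explicit form of the minimizer, and finally bootstrap regularity.

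\textbf{Step 1: Finiteness of the infimum and compactness of a minimizing sequence.} I would first show $\ci(M_1,M_j)>-\infty$. Integration by parts gives $\int_0^{2\pi}(\phi_f')^2\,\dd\theta=-\int \phi_f(\rho_f-\tfrac{M_1}{2\pi})\,\dd\theta$, and since $\phi_f=W*\rho_f$ with $W$ bounded on $\bt$, one gets $\|\phi_f\|_{\dL^\infty}\leq CM_1$ and hence $\int (\phi_f')^2\leq CM_1^2$. Thus $\ch(f)\geq -CM_1^2$. Take a minimizing sequence $(f_n)$. Assumption (H3) implies a bound $j(t)\geq c_1 t^p-c_2$ for some $p>1$, so the constraint $\iint j(f_n)=M_j$ yields a uniform $\dL^p$ bound; together with $\|f_n\|_{\dL^1}=M_1$ this gives weak compactness in $\dL^1\cap \dL^p$. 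The bound on $\ch(f_n)$ combined with the above potential estimate controls $\iint v^2 f_n$ uniformly, providing tightness in the $v$-variable. Extracting a subsequence, $f_n\rightharpoonup f_0$ weakly in $\dL^p$ and in the moment-weighted $\dL^1$. Tightness in $v$ together with compactness of $\bt$ yields strong convergence $\rho_{f_n}\to\rho_{f_0}$ in $\dL^q(\bt)$ for $q<p$, and via the Poisson equation $\phi_{f_n}'\to\phi_{f_0}'$ strongly in $\dL^2(\bt)$, so the nonlocal part of $\ch$ passes to the limit.

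\textbf{Step 2: Saturation of both constraints.} Fatou's lemma and convexity give $\|f_0\|_{\dL^1}\leq M_1$ and $\iint j(f_0)\leq M_j$; the tightness in $v$ rules out mass loss at infinity, so the $\dL^1$ constraint is preserved. The delicate point---and the main obstacle I anticipate---is to show that the $j$-constraint remains saturated. In problems on $\br^d$ one would usually rescale $f_\lambda(\theta,v)=\lambda^\alpha f(\theta,\lambda^\beta v)$ to produce an admissible competitor strictly below the infimum when $\iint j(f_0)<M_j$, but on the bounded torus $\bt$ this scaling is unavailable, as highlighted in the introduction. I would replace it by one of the transformations announced in the abstract, acting on the level sets of $f_0$ so as to perturb $\iint j(f_0)$ while preserving mass and without increasing $\ch$; the strict convexity of $j$ together with the strong convergence of $\phi_{f_n}'$ would then force equality $\iint j(f_0)=M_j$, ensuring that $f_0$ is admissible and attains the infimum.

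\textbf{Step 3: Euler--Lagrange equation and regularity.} With two active constraints, standard admissible variations yield two Lagrange multipliers $\lambda_0,\mu_0$ and the pointwise identity $\frac{v^2}{2}+\phi_{f_0}(\theta)=\lambda_0+\mu_0 j'(f_0)$ on $\{f_0>0\}$; inversion via $(j')^{-1}$ and the fact that $j'(0)=0$ gives the stated formula with the $(\cdot)_+$ truncation. The sign $\mu_0<0$ is forced: if $\mu_0>0$, then (H3) makes $(j')^{-1}$ grow polynomially on large positive arguments, so $f_0$ would blow up at $|v|\to\infty$ in contradiction with the moment bound; hence $\mu_0<0$ and $\{f_0>0\}=\{v^2/2<\lambda_0-\phi_{f_0}(\theta)\}$ is bounded in $v$. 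Continuity of $f_0$ follows because $f_0$ is a continuous function of the continuous quantity $v^2/2+\phi_{f_0}(\theta)$; integrating in $v$ gives $\rho_{f_0}\in \cc^0(\bt)$ (the $v$-integrand vanishes at the boundary of support), and the Poisson equation then upgrades $\phi_{f_0}\in \cc^2(\bt)$. Piecewise $\cc^1$ smoothness of $f_0$ follows by separating $\{f_0>0\}$ from its complement. Finally, $f_0$ being a function of $v^2/2+\phi_{f_0}(\theta)$ is automatically a stationary solution of (\ref{HMF}), since $v\partial_\theta f_0-\partial_\theta\phi_{f_0}\partial_v f_0=0$ by direct computation.
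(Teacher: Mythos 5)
There is a genuine gap at the heart of your Step 2, which is precisely the step the whole proof hinges on. You correctly identify that the weak limit $f_0$ of a minimizing sequence may only satisfy $\iint j(f_0)\leq M_j$ and that the usual scaling repair is unavailable on $\bt$, but you then defer to ``one of the transformations announced in the abstract, acting on the level sets of $f_0$'' without constructing it or explaining why it would restore the constraint without increasing $\ch$. That construction is the actual content of the paper's proof and it does not act on the weak limit at all: Lemma \ref{F^phi} shows that for \emph{any} continuous potential $\phi$ there is a unique pair $(\lambda,\mu)\in\br\times\br_-^*$ such that $F^{\phi}=(j')^{-1}\bigl(\frac{v^2/2+\phi-\lambda}{\mu}\bigr)_+$ satisfies \emph{both} constraints exactly (a two--parameter shooting argument requiring the monotonicity, continuity and limits of the map $\mu\mapsto G(\mu)$, which occupies several pages and uses (H3) essentially); Lemma \ref{inequality_relou} then gives the monotonicity $\ch(F^{\phi_f})\leq\cj(\phi_f)\leq\ch(f)$, so that the modified sequence $(F^{\phi_{f_n}})_n$ is again minimizing, is admissible by construction, and converges strongly because the multipliers $(\lambda_n,\mu_n)$ stay bounded with $\mu_n$ bounded away from $0$ (Lemma \ref{lambda_et_mu}). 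Without some substitute for this machinery your existence argument does not close, and it is not clear that a level-set perturbation of $f_0$ alone could simultaneously restore $\iint j=M_j$, preserve $\|f\|_{\dL^1}=M_1$ and not increase $\ch$.

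Two smaller points. First, in Step 1 you claim that weak $\dL^p$ compactness plus tightness in $v$ yields \emph{strong} $\dL^q$ convergence of $\rho_{f_n}$; this does not follow (no averaging structure is invoked), but it is also unnecessary: since $W$ and $W'$ are bounded continuous kernels, the weak $\dL^1$ convergence of $f_n$ already gives $\|\phi_{f_n}-\phi_{f_0}\|_{H^1}\to 0$, which is all that is needed to pass to the limit in the nonlocal term. Second, your Step 3 is essentially sound and close to the paper's conclusion (including the sign argument for $\mu_0$ and the bootstrap $\rho_{f_0}\in\cc^0\Rightarrow\phi_{f_0}\in\cc^2$), though the paper avoids the delicacies of infinite-dimensional Lagrange multipliers with a positivity constraint by instead proving $\bar f=F^{\bar\phi}$ directly from the convexity identity (\ref{convexity}) and Taylor's formula; if you keep the variational derivation you should justify the constraint-qualification and handle the set $\{f_0=0\}$ carefully.
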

Since the existence of ground states is established, the natural second result is the uniqueness of these ground states. For the two constraints cases, we are only able to obtain a local uniqueness for the ground states under equimeasurability condition. A steady state $f$ will be said to be homogeneous if $\phi_f = 0$ and inhomogeneous is $\phi_f\neq 0$. We have the  following lemma which will be proved in Section \ref{Uniqueness2}.

\sloppy \begin{lem}[Local uniqueness of the minimizer under equimeasurability condition] \label{Unicite2}Let $f_0\in E_j$ be a steady state of (\ref{HMF}) and a minimizer of (\ref{I(M1,Mj)}). It can be written in the form (\ref{forme}) with  $(\lambda_0, \mu_0)\in\br\times\br_-^*$. We have the following cases:
\begin{itemize}
\item[$\bullet$] $f_0$ is a homogeneous steady state. Then it is the only steady state minimizer of (\ref{I(M1,Mj)}) under equimeasurability condition. 
\item[$\bullet$]$f_0$ is an inhomogeneous steady states.Then, there exists $\delta_0>0$ such that for all $f\in E_j$ inhomogeneous steady state of (\ref{HMF}) and minimizer of (\ref{I(M1,Mj)}) equimeasurable to $f_0$ which can be written as (\ref{forme}) with $(\lambda,\mu)\in\br\times\br_-^*$, we have 
\begin{itemize}
\item either $\mu_0\neq \mu$ and $\vert \vert\mu_0\vert-\vert\mu\vert\vert>\delta_0$,
\item either $\mu_0=\mu$ and $f_0=f$ up to a translation shift in $\theta$.
\end{itemize}
\end{itemize}
\end{lem}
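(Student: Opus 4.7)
The plan is to use the Euler-Lagrange form (\ref{forme}) to reduce the problem, via the reduced profile $\chi := (\lambda-\phi_f)/|\mu|$, to a rigidity question about periodic orbits of a one-parameter family of autonomous ODEs, and then to exploit equimeasurability through an Abel inversion. Integrating (\ref{forme}) in $v$ after the substitution $v = \sqrt{2(\lambda - \phi_f)}\,w$ gives $\rho_f(\theta) = \sqrt{|\mu|}\,R(\chi(\theta))$, where
\[R(a) := \sqrt{2a}\int_0^1 (j')^{-1}\!\bigl(a(1-w^2)\bigr)\,dw\]
is a fixed, strictly increasing function; injecting this expression into the Poisson equation produces the autonomous conservative ODE
\[\chi'' = \frac{M_1}{2\pi|\mu|} - \frac{R(\chi)}{\sqrt{|\mu|}}\qquad (\mathcal{E}_\mu)\]
on $\{\chi>0\}$, with first integral $\tfrac12(\chi')^2 + V_\mu(\chi) = E$. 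A direct computation of $\mu_f(s) = |\{f>s\}|$ from (\ref{forme}) expresses it as an Abel integral of the $\theta$-distribution $N_\chi(u) := |\{\theta\in\bt : \chi(\theta)>u\}|$,
\[\mu_f(s) = \sqrt{2|\mu|}\int_{j'(s)}^{+\infty} N_\chi(u)\,(u-j'(s))^{-1/2}\,du,\]
and by uniqueness of the Abel inversion, equimeasurability $\mu_f = \mu_{f_0}$ is equivalent to
\[\sqrt{|\mu|}\,N_\chi(u) = \sqrt{|\mu_0|}\,N_{\chi_0}(u)\qquad\text{for every }u\geq 0.\qquad (\mathcal{R})\]

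In the homogeneous case $\phi_{f_0}=0$, $\chi_0$ is a positive constant and $N_{\chi_0}$ is a step function, so $(\mathcal{R})$ forces $N_\chi$ to take only two values; the continuity of $\chi$ inherited from $\phi_f\in\mathcal{C}^2$ then forces $\chi$ to be identically constant, so $\phi_f$ is constant and, having zero mean, vanishes. Matching the constant values in $(\mathcal{R})$ yields $|\mu|=|\mu_0|$, $\lambda=\lambda_0$, and hence $f=f_0$.

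For the inhomogeneous case, when $\mu=\mu_0$ relation $(\mathcal{R})$ reads $N_\chi = N_{\chi_0}$, so $\chi$ and $\chi_0$ are equimeasurable and share the same extrema; both solve the autonomous equation $(\mathcal{E}_{\mu_0})$ and hence trace the same closed orbit at the same energy, and Cauchy-Lipschitz (after translating $\chi$ so that its maximum sits above that of $\chi_0$) gives $\chi=\chi_0$ and thus $f=f_0$ up to a $\theta$-shift. For the existence of the gap $\delta_0$, I argue by contradiction: assume inhomogeneous minimizers $f_n$ equimeasurable to $f_0$ with $\mu_n\neq\mu_0$ and $\mu_n\to\mu_0$. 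Then $(\mathcal{R})$ evaluated near $u=\max\chi_0$ forces $\max\chi_n = \max\chi_0 =: M^\ast$, so $\chi_n$ is a $2\pi$-periodic orbit of $(\mathcal{E}_{\mu_n})$ through the phase-plane point $(M^\ast,0)$, i.e. the period function $\mathcal{P}(\mu,M^\ast)$ satisfies $\mathcal{P}(\mu_n,M^\ast)=2\pi$. A computation of $\partial_\mu\mathcal{P}(\mu_0,M^\ast)$ via the period-energy formula, combining the strict monotonicity of $R$ with the inhomogeneity inequality $R(\min\chi_0) < M_1/(2\pi\sqrt{|\mu_0|})$ (obtained by averaging the identity $\rho_{f_0} = \sqrt{|\mu_0|}\,R(\chi_0)$), yields $\partial_\mu\mathcal{P}(\mu_0,M^\ast)\neq 0$; this contradicts $\mathcal{P}(\mu_n,M^\ast)=2\pi$ with $\mu_n\to\mu_0$, $\mu_n\neq\mu_0$ by the implicit function theorem, and produces the claimed $\delta_0$.

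The main obstacle is the non-degeneracy $\partial_\mu\mathcal{P}(\mu_0,M^\ast)\neq 0$: the extremum constraint alone does not yield rigidity, and it is crucial to additionally exploit the $2\pi$-periodicity of $\chi_n$ through the period-energy integral, in a computation where the inhomogeneity of $\chi_0$ enters decisively to prevent the contributions from the coefficient $1/|\mu|$ and from the nonlinear term $R(\chi)/\sqrt{|\mu|}$ in $(\mathcal{E}_\mu)$ from cancelling at $\mu=\mu_0$.
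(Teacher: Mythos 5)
Your reduction to the profile $\chi=(\lambda-\phi_f)/|\mu|$, the Abel-integral expression of $\mu_f$, and the inverted relation $(\mathcal{R})$ are correct and in fact recover (and slightly strengthen) the paper's key identity (\ref{egalite_utile}); your treatment of the homogeneous case and of the inhomogeneous case with $\mu=\mu_0$ matches the paper's (Cauchy--Lipschitz on the autonomous ODE after matching extrema). The divergence, and the problem, is in the isolation of $\mu_0$.

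First, a concrete error: a $2\pi$-periodic inhomogeneous solution $\chi_n$ of $(\mathcal{E}_{\mu_n})$ through $(M^\ast,0)$ need not traverse the closed orbit once; its minimal period is $2\pi/p_n$ for some integer $p_n\geq 1$ ($p_n$ being the number of maxima of $\chi_n$ on the torus, which the paper must control via its Lemma \ref{nb_fini_max} and which enters its identity (\ref{egalite_mu}) precisely through the factors $p_1,p_2$). So your constraint is $\mathcal{P}(\mu_n,M^\ast)=2\pi/p_n$, not $=2\pi$, and you must rule out $p_n\to\infty$ or $p_n$ jumping before any implicit-function argument applies. Second, and more seriously, the entire gap argument rests on the assertion $\partial_\mu\mathcal{P}(\mu_0,M^\ast)\neq 0$, which you do not prove. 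The period of the orbit through $(M^\ast,0)$ is $2\int_{m_\mu}^{M^\ast}\bigl(2(V_\mu(M^\ast)-V_\mu(\chi))\bigr)^{-1/2}\dd\chi$ with a $\mu$-dependent lower turning point and a doubly singular integrand; differentiating in $\mu$ produces competing contributions from the $|\mu|^{-1}$ forcing term and the $|\mu|^{-1/2}R(\chi)$ term, and neither the strict monotonicity of $R$ nor the averaged inequality $R(\min\chi_0)<M_1/(2\pi\sqrt{|\mu_0|})$ gives these a common sign (period functions of nonlinear oscillators are generically non-monotone in parameters). This is not a routine computation you can defer; as stated it is an unsubstantiated claim on which the whole second bullet depends. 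The paper avoids this entirely: from the second-order expansion of $\int(\psi-e)_+^{1/2}$ near $e=\max\psi$ (Lemma \ref{DLcas2}) it extracts the scalar invariant $p\,F(|\mu|)=A_0$ with $F(x)=x^{3/4}/\sqrt{|a(e_0)-x^{-1/2}c_0|}$, observes that each fiber $\{F=A_0/p\}$ has at most four points, and that the union over $p$ accumulates only at $0$ --- a purely algebraic isolation argument requiring no differentiability of any period map. To salvage your route you would need either to actually establish the non-degeneracy of $\mathcal{P}$ (handling the $p_n$ multiplicity), or to replace it by an explicit local expansion of the constraint in $\mu$, which is essentially what the paper does.
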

Then, similarly to the one-constraint problem, we will show the following result concerning the orbital stability of the ground states under the action of the HMF Poisson flow. It will be proved in Section \ref{implication et compacite}.

\begin{theo}[Orbital stability of ground states]\label{Orbital_stability2} Let $M_1, M_j>0$. Then any steady state $f_0$ of (\ref{HMF}) which minimizes (\ref{I(M1,Mj)}) is orbitally stable under the flow (\ref{HMF}). It means that given $\varepsilon>0$, there exists $\eta(\varepsilon)>0$ such that the following holds true. Consider $f_{init}\in E_j$ with $\Vert (1+v^2) (f_{init}-f_0)\Vert_{\dL^1}<\eta(\varepsilon)$ and with $\left\vert \iint j(f_{init}) -\iint j(f_0)\right\vert<\eta(\varepsilon)$. Let $f(t)$ be a weak global solution to (\ref{HMF}) on $\br_+$ with initial data $f_{init}$ such that the Casimir functions are preserved during the evolution and that $\ch(f(t))\leq \ch(f_{init})$. Then there exists a translation shift $\theta(.)$ with values in $[0,2\pi]$ such that $\forall t\in\br_+^*$, we have
\[\Vert (1+v^2)(f(t,\theta+\theta(t),v)-f_0(\theta,v))\Vert_{\dL^1}<\varepsilon.\]
\end{theo}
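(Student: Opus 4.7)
I follow the standard variational approach to orbital stability of ground states: argue by contradiction, build a minimizing sequence of $\mathcal{I}(M_1,M_j)$ from a hypothetically drifting solution, apply the compactness of minimizing sequences that comes as a byproduct of the existence proof of Theorem \ref{Existence2}, and identify the limit as $f_0$ up to a shift via the equimeasurability preserved along the HMF flow together with Lemma \ref{Unicite2}.

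Concretely, suppose $f_0$ is not orbitally stable. Then one obtains $\varepsilon_0 > 0$, initial data $f_{init,n}$ with $\Vert (1+v^2)(f_{init,n}-f_0)\Vert_{\dL^1}+\vert\iint j(f_{init,n})-\iint j(f_0)\vert\to 0$, weak solutions $f_n(t)$ preserving all Casimirs and satisfying $\ch(f_n(t))\leq \ch(f_{init,n})$, and times $t_n$ such that $g_n:=f_n(t_n,\cdot,\cdot)$ stays at distance at least $\varepsilon_0$ from every $\theta$-translate of $f_0$ in the $(1+v^2)\dL^1$-norm. Conservation of mass and of the Casimir $\iint j(\cdot)$ give $\Vert g_n\Vert_{\dL^1}\to M_1$ and $\iint j(g_n)\to M_j$; combined with the energy inequality and $\ch(f_{init,n})\to \ch(f_0)=\mathcal{I}(M_1,M_j)$, this yields $\limsup_n \ch(g_n)\leq \mathcal{I}(M_1,M_j)$. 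A small rescaling $\tilde g_n(\theta,v)=\alpha_n g_n(\theta,\beta_n v)$ with $\alpha_n,\beta_n\to 1$ restores the exact constraints and makes $(\tilde g_n)$ a genuine minimizing sequence of $\mathcal{I}(M_1,M_j)$. The compactness of minimizing sequences, established in Section \ref{implication et compacite} along with Theorem \ref{Existence2}, then yields, up to a subsequence and translations $\theta_n\in[0,2\pi]$, a strong $E_j$-convergence $\tilde g_n(\cdot+\theta_n,\cdot)\to\bar f$ with $\bar f$ a minimizer of $\mathcal{I}(M_1,M_j)$.

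It remains to identify $\bar f$ with $f_0$ up to a shift. The transport structure of the HMF flow ensures $g_n$ is equimeasurable to $f_{init,n}$; extracting an almost-everywhere converging subsequence and passing to the limit in the distribution functions yields that $\bar f$ is equimeasurable to $f_0$. In the homogeneous case Lemma \ref{Unicite2} immediately gives $\bar f=f_0$ up to a shift, contradicting the $\varepsilon_0$-separation. The main obstacle is the inhomogeneous case, where Lemma \ref{Unicite2} only provides the dichotomy $\bar\mu=\mu_0$, in which case $\bar f=f_0$ up to a shift, or else the jump $\vert\vert\mu_0\vert-\vert\bar\mu\vert\vert>\delta_0$. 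To rule out this jump, I would exploit the representation (\ref{forme}) of $\bar f$ together with the $\cc^2(\bt)$-convergence of the potentials $\phi_{\tilde g_n(\cdot+\theta_n)}\to\phi_{\bar f}$ provided by Theorem \ref{Existence2}(3), and the nondegeneracy of the Euler-Lagrange system at $(\lambda_0,\mu_0)$: this should ensure that $\bar\mu$ depends continuously on the minimizing sequence through the parameters in (\ref{forme}) and stays close to $\mu_0$, so that the dichotomy forces $\bar\mu=\mu_0$. Once $\bar f=f_0$ up to a shift is obtained, the strong $E_j$-convergence of $\tilde g_n(\cdot+\theta_n,\cdot)$ contradicts the $\varepsilon_0$-separation and closes the argument.
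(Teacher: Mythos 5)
Your overall architecture (contradiction, rescaling to restore the two constraints, compactness of minimizing sequences, equimeasurability of the limit, then the dichotomy of Lemma \ref{Unicite2}) matches the paper's Step 1 and the first half of its Step 2. The genuine gap is in your treatment of what you correctly identify as the main obstacle: excluding the branch $\vert\vert\mu_0\vert-\vert\bar\mu\vert\vert>\delta_0$. Your proposed fix --- that the $\cc^2$ convergence of the potentials and a nondegeneracy of the Euler--Lagrange system make $\bar\mu$ ``depend continuously on the minimizing sequence'' and hence stay close to $\mu_0$ --- does not work. The potentials $\phi_{\tilde g_n}$ converge to $\phi_{\bar f}$, not to $\phi_{f_0}$, so continuity of the multiplier map $\phi\mapsto\mu(\phi)$ only tells you that $\bar\mu$ is the multiplier of $\bar f$, which is tautological. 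Nothing in the static variational structure connects $\bar\mu$ to $\mu_0$: the times $t_n$ are arbitrary, so the solution could in principle have wandered to a neighbourhood of a different equimeasurable minimizer with a different multiplier. What is needed is a \emph{dynamical} argument that the flow cannot cross from one branch of the dichotomy to the other.

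The paper supplies exactly this. Using the Euler--Lagrange computation it derives the identity (\ref{expression_mu}), $\mu=-\Vert v^2 f\Vert_{\dL^1}/C_f$ with $C_f$ an equimeasurability invariant, so the $\mu$-separation of Lemma \ref{Unicite2} becomes a separation $\vert\Vert v^2f_0\Vert_{\dL^1}-\Vert v^2f\Vert_{\dL^1}\vert>\delta_0$ of kinetic energies (Lemma \ref{nouveau}). Since $t\mapsto\Vert v^2f^n(t)\Vert_{\dL^1}$ is continuous, starts within $\delta_0/4$ of $\Vert v^2f_0\Vert_{\dL^1}$, and (by the auxiliary Lemma \ref{lem_pre}, itself proved by a compactness argument) is forced back within any $\varepsilon$ whenever it lies within $\delta_0/2$, the kinetic energy can never jump across the forbidden gap; hence $\vert\Vert v^2\bar g\Vert_{\dL^1}-\Vert v^2f_0\Vert_{\dL^1}\vert\leq\delta_0$, the jump branch is excluded, and $\bar\mu=\mu_0$. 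Without this (or an equivalent connectedness-in-time argument on some flow-continuous quantity that detects $\mu$), your proof does not close in the inhomogeneous case.
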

%%%%%%%%%%%%%%%Cas d'un nombre infini de contraintes%%%%%%%%%%%%%%%%%%%%%%%%%
\subsubsection{Infinite number of constraints problem}
Finally, in Section \ref{infini_contraintes}, we will show the orbital stability of stationary states which are minimizers of a problem with an infinite number of constraints. In this Section, the energy space is the following
\begin{equation}
\ce=\{f\geq 0, \Vert (1+v^2)f\Vert_{\dL^1}<+\infty, \Vert  f\Vert_{\dL^{\infty}}<+\infty\}.
\end{equation}
Let $f_0\in \ce\cap\mathcal{C}^0([0,2\pi]\times\mathbb{R})$. We will denote by  $Eq(f_0)$ the set of equimeasurable functions to $f_0$. The variational problem is
\begin{equation}\label{H0}
H_0= \underset{f\in Eq(f_0), f\in \ce}{\inf}\ch (f).
\end{equation}
This is a variational problem with infinitely  many constraints since the equimeasurability condition on $f$ is  
equivalent to say that $f$ has the same casimirs as $f_0$: 
$\|j(f)\|_{L^1} = \|j(f_0)\|_{L^1}$, $\forall j$.
\begin{defi}
We shall say that a sequence $f_n$ converges to $f$ in $\mathcal{E}$ and we shall write $ f_n \xrightarrow{\ce}f$ if $(f_n)_n$ is uniformly bounded and satisfies $\Vert (1+v^2)(f_n - f) \Vert_{\mathrm{L}^1}\underset{n\to +\infty}{\longrightarrow}0$.
\end{defi}
We start by showing in Section \ref{existence_limite_faible} the existence of ground states for the HMF Poisson model (\ref{HMF}) which are minimizers of the variational problem (\ref{H0}).
\begin{theo}[Existence of ground states]\label{Existence3}
The infimum (\ref{H0}) is finite and is achieved at a minimizer $\bar{f}\in\ce$ which is a steady state of (\ref{HMF}). 
\end{theo}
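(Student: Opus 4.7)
The argument will proceed by the direct method of the calculus of variations; the sole genuine difficulty is that the constraint set $Eq(f_0)$ is not closed under weak $\dL^p$ convergence.

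\emph{Finiteness, extraction, and passage to the limit.} For every $f\in Eq(f_0)\cap\ce$ one has $\Vert f\Vert_{\dL^1}=\Vert f_0\Vert_{\dL^1}$ and $\Vert f\Vert_{\dL^\infty}=\Vert f_0\Vert_{\dL^\infty}$. Testing the Poisson equation (\ref{Poisson}) against $\phi_f$ and using $\int\phi_f=0$ together with $\Vert\phi_f\Vert_{\dL^\infty}\leq\Vert W\Vert_{\dL^\infty}\Vert f_0\Vert_{\dL^1}$ gives
\[
\int_0^{2\pi}\vert\phi_f'\vert^2\,\dd\theta = -\int_0^{2\pi}\phi_f\,\rho_f\,\dd\theta \leq C\Vert f_0\Vert_{\dL^1}^2,
\]
so $\ch(f)\geq -C\Vert f_0\Vert_{\dL^1}^2$ and $H_0>-\infty$. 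A minimizing sequence $(f_n)\subset Eq(f_0)\cap\ce$ then satisfies a uniform bound on $\iint v^2 f_n$, hence, with the $\dL^\infty$ bound, uniform $\dL^p$ bounds for every $p$ and Chebyshev tightness in $v$; up to a subsequence $f_n\rightharpoonup\bar f$ weakly in $\dL^1\cap\dL^p$ for every finite $p>1$. The interpolation $\rho_f\leq C\Vert f\Vert_{\dL^\infty}^{2/3}\bigl(\int v^2 f\,\dd v\bigr)^{1/3}$ supplies a uniform $\dL^3(\bt)$ bound on $\rho_{f_n}$, hence a uniform $W^{2,3}(\bt)$ bound on $\phi_{f_n}$; Rellich compactness on the compact torus yields, along a further subsequence, $\phi_{f_n}\to\phi_{\bar f}$ strongly in $\cc^1(\bt)$, so that $\int\vert\phi_{f_n}'\vert^2\to\int\vert\phi_{\bar f}'\vert^2$. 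Truncating $v^2$ by $\min(v^2,R^2)$, using the weak $\dL^1$ convergence, and sending $R\to\infty$ by monotone convergence yields the lower semicontinuity $\iint v^2\bar f\leq\liminf_n\iint v^2 f_n$. Consequently $\ch(\bar f)\leq H_0$.

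\emph{Recovering equimeasurability --- the main obstacle.} Weak $\dL^p$ limits generally lose equimeasurability, and this is the only essential difficulty. To bypass it I would substitute, from the outset, each $f_n$ by its \emph{isoenergetic rearrangement}
\[
\tilde f_n(\theta,v)=F_n\bigl(\tfrac{v^2}{2}+\phi_{f_n}(\theta)\bigr),
\]
where the nonincreasing profile $F_n$ is uniquely determined by the condition $\tilde f_n\in Eq(f_0)$, namely $F_n=\mu_{f_0}^{-1}\circ\lambda_n$ with $\lambda_n(t)=\vert\{(\theta,v):v^2/2+\phi_{f_n}(\theta)<t\}\vert$. The Hardy--Littlewood rearrangement inequality applied to the microscopic energy $v^2/2+\phi_{f_n}$ gives $\iint\tilde f_n(v^2/2+\phi_{f_n})\leq\iint f_n(v^2/2+\phi_{f_n})$, which after a short computation expanding the potential terms as in the previous step rewrites as
\[
\ch(\tilde f_n)\leq \ch(f_n)-\tfrac{1}{2}\int_0^{2\pi}\vert\phi_{f_n}'-\phi_{\tilde f_n}'\vert^2\,\dd\theta.
\]
Hence $(\tilde f_n)$ is again a minimizing sequence, and $\ch(\tilde f_n)\geq H_0$ forces $\phi_{\tilde f_n}'-\phi_{f_n}'\to 0$ in $\dL^2$; combined with the $\cc^1$ convergence $\phi_{f_n}\to\phi_{\bar f}$ this also gives $\phi_{\tilde f_n}\to\phi_{\bar f}$. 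The level functions $\lambda_n$ and the profiles $F_n$ then converge pointwise at continuity points of $\mu_{f_0}$, so $\tilde f_n\to F\bigl(\tfrac{v^2}{2}+\phi_{\bar f}(\theta)\bigr)$ almost everywhere; tightness in $v$ and the uniform $\dL^\infty$ bound upgrade this to $\dL^1$ convergence, identify the limit with $\bar f$, and yield $\bar f\in Eq(f_0)$. Finally, the representation $\bar f=F\bigl(\tfrac{v^2}{2}+\phi_{\bar f}(\theta)\bigr)$ with $F$ nonincreasing gives at once $v\partial_\theta\bar f-\partial_\theta\phi_{\bar f}\partial_v\bar f=0$, so $\bar f$ is a steady state of (\ref{HMF}).
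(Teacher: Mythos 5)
Your construction $\tilde f_n=F_n\bigl(\tfrac{v^2}{2}+\phi_{f_n}\bigr)$ is exactly the paper's decreasing rearrangement $f_0^{*\phi_{f_n}}$ with respect to the microscopic energy, your key inequality $\ch(\tilde f_n)\leq \ch(f_n)-\tfrac12\Vert\phi'_{f_n}-\phi'_{\tilde f_n}\Vert_{\dL^2}^2$ is precisely the combination of Lemma \ref{Inequalities1-2-3} with the identity (\ref{norme2ter}), and the passage to the limit is carried out the same way, so this is essentially the paper's proof. The one step you have not justified is the identification of the strong $\dL^1$ limit of $(\tilde f_n)$ with the weak limit $\bar f$ of the original sequence --- in the paper that identification is the content of the much harder compactness argument via Lemma \ref{Bphi} --- but it is also unnecessary for existence: the limit $F\bigl(\tfrac{v^2}{2}+\phi_{\bar f}\bigr)$ is itself equimeasurable to $f_0$, its potential equals $\phi_{\bar f}$ by your $\dL^2$ estimate on $\phi'_{\tilde f_n}-\phi'_{f_n}$, and it is therefore the desired self-consistent minimizing steady state.
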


Our second result concerns the orbital stability of the above constructed ground states under the action of the HMF flow. As we do not have the uniqueness of the minimizers under constraint of equimeasurablility, we can just get the orbital stability of the set of minimizers and not the orbital stability of each minimizer. It will be proved in Section \ref{OS3}.

\begin{theo}[Orbital stability of ground states]\label{Stability3}
Let $f_0\in\ce\cap\cc^0([0,2\pi]\times\br)$.Then the set of steady states of (\ref{HMF}) which minimize (\ref{H0}) is orbitally stable under the flow (\ref{HMF}). More precisely given $f_{i_0}$ minimizer of (\ref{H0}), for all $\varepsilon>0$, there exists $\eta(\varepsilon)>0$ such that the following holds true. Consider $f_{init}\in \ce$ with $\Vert (1+v^2)(f_{init}-f_{i_0})\Vert_{\dL^1}<\eta(\varepsilon)$. Let $f(t)$ a weak global solution to (\ref{HMF}) on $\br^+$ with initial data $f_{init}$ such that the Casimir functions are preserved during the evolution and that $\ch(f(t))\leq \ch(f_{init})$. Then there exist $f_{i_1}$ minimizer of (\ref{H0}) and a translation shift $\theta(.)$ with values in $[0,2\pi]$ such that $\forall t\in\br_+^*$, we have
\[\Vert (1+v^2)( f(t,\theta+\theta(t),v)-f_{i_1}(\theta,v))\Vert_{\dL^1}<\varepsilon.\]
\end{theo}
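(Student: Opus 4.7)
The plan is to argue by contradiction, reducing the orbital stability statement to a compactness property for generalized minimizing sequences of \eqref{H0} that should be a by-product of the existence proof (Theorem \ref{Existence3}). Suppose the conclusion fails: there exist $\varepsilon_0>0$, a sequence of initial data $(f_{init}^n)$ with $\Vert(1+v^2)(f_{init}^n-f_{i_0})\Vert_{\dL^1}\to 0$, weak solutions $f_n(t)$ preserving the Casimirs and satisfying $\ch(f_n(t))\leq\ch(f_{init}^n)$, and times $t_n\geq 0$ such that the profiles $g_n:=f_n(t_n,\cdot,\cdot)$ stay at $(1+v^2)\dL^1$-distance at least $\varepsilon_0$ from every translate of every minimizer of \eqref{H0}. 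The goal is then to extract a subsequence of translates of $g_n$ converging in $\ce$ to some minimizer, which produces the contradiction.

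The first step is to check that $(g_n)$ is, up to translation, a generalized minimizing sequence for \eqref{H0}. Since $f_{init}^n\to f_{i_0}$ in $\dL^1$ and $f_{i_0}\in Eq(f_0)$, convergence in measure yields $\mu_{f_{init}^n}(s)\to\mu_{f_0}(s)$ at each continuity point of $\mu_{f_0}$. Conservation of the Casimirs gives $g_n\in Eq(f_{init}^n)$, so $\mu_{g_n}=\mu_{f_{init}^n}$; in particular $\Vert g_n\Vert_{\dL^\infty}=\Vert f_{init}^n\Vert_{\dL^\infty}$ is uniformly bounded. Uniform control of $\Vert(1+v^2)g_n\Vert_{\dL^1}$ follows from the conservation of mass and momentum together with the energy inequality and an estimate of the kinetic energy via the potential term $\tfrac12\Vert\phi_{f}'\Vert_{\dL^2}^2$, which is bounded on $\dL^\infty\cap\dL^1$-bounded sets thanks to the regularity of $W$. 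Continuity of $\ch$ with respect to $\ce$-convergence then gives $\ch(f_{init}^n)\to\ch(f_{i_0})=H_0$, whence $\limsup\ch(g_n)\leq H_0$.

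The second step invokes the compactness lemma used to prove Theorem \ref{Existence3}: any $\ce$-bounded sequence with distribution functions converging to $\mu_{f_0}$ at continuity points of $\mu_{f_0}$ and with $\limsup\ch(\cdot)\leq H_0$ admits, after an appropriate $\theta$-translation $\theta_n$ and extraction, a strong $\ce$-limit $\bar g$. Passing to the limit in the level-set identities yields $\bar g\in Eq(f_0)$, lower semicontinuity of the kinetic term together with strong convergence of $\rho_{g_n}$ (which passes to the nonlinear Poisson term since $W\in\dL^\infty$) gives $\ch(\bar g)\leq H_0$, and so $\bar g$ is a minimizer of \eqref{H0}. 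Strong $\ce$-convergence of $g_n(\cdot+\theta_n,\cdot)$ to $\bar g$ then contradicts the standing lower bound $\varepsilon_0$, concluding the proof.

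The main obstacle is guaranteeing strong, and not merely weak, $(1+v^2)\dL^1$-convergence of $g_n(\cdot+\theta_n,\cdot)$ to $\bar g$, and the simultaneous convergence of the Poisson nonlinearity. This relies on the rigidity imposed by the equimeasurability constraint, which rules out vanishing or splitting of mass along the sequence, together with the uniform second-moment bound, which prevents escape to infinity in $v$. A secondary subtlety is that $g_n\in Eq(f_{init}^n)$ rather than $Eq(f_0)$: the limit of the distribution functions is handled by combining convergence in measure of $f_{init}^n\to f_{i_0}$ with the identity $\mu_{g_n}=\mu_{f_{init}^n}$, so that $\bar g\in Eq(f_{i_0})=Eq(f_0)$ in the limit. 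Once these two ingredients are extracted from the proof of Theorem \ref{Existence3}, the remainder of the argument is the standard Cazenave--Lions type contradiction.
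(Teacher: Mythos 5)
Your overall architecture (contradiction, promote $g_n:=f^n(t_n)$ to a minimizing sequence of \eqref{H0}, extract a strong $\ce$-limit which must be a minimizer) is the same Cazenave--Lions scheme the paper uses, and your treatment of the peripheral points (uniform bounds on $\Vert(1+v^2)g_n\Vert_{\dL^1}$ and $\Vert g_n\Vert_{\dL^\infty}$, passage of equimeasurability to the limit via level sets, the sandwich giving $\ch(g_n)\to H_0$) matches Step 2 of the paper's proof. But there is a genuine gap at the point you yourself flag as "the main obstacle": the strong $\dL^1$ compactness of the minimizing sequence $(g_n)$ itself. You present this as "a by-product of the existence proof (Theorem \ref{Existence3})", but it is not: the existence proof only obtains compactness for the \emph{auxiliary} sequence $f_0^{*\phi_{g_n}}$ (the rearrangement of the fixed profile $f_0$ along the microscopic energy $\frac{v^2}{2}+\phi_{g_n}$), which converges a.e.\ simply because $a_{\phi_{g_n}}$ and $\phi_{g_n}$ converge and $f_0^{\#}$ is continuous. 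That gives a minimizer but says nothing about $g_n$ itself. The mechanism you propose to close the gap --- a concentration-compactness style "no vanishing / no splitting" argument based on the rigidity of equimeasurability plus the second-moment bound --- is not how the paper proceeds and is not obviously sufficient: equimeasurability constrains the distribution function $\mu_{g_n}$ but by itself does not prevent the mass of $g_n$ from being arranged along level sets of a phase-space geometry incompatible with strong convergence to a fixed profile.

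What the paper actually does (Step 1 of Section \ref{OS3} together with Lemma \ref{Bphi}) is quantitative: writing $\Vert g_n-f_0^{*\bar\phi}\Vert_{\dL^1}=2\int_0^{\infty}\beta_{g_n,f_0^{*\bar\phi}}(t)\,\dd t+\Vert g_n\Vert_{\dL^1}-\Vert f_0\Vert_{\dL^1}$, it controls $\beta_{g_n,f_0^{*\bar\phi}}$ through the strictly convex functional $B_{\bar\phi}$ of \cite{Lemou_seul}, using the inequality $\int_0^{\Vert f_0\Vert_{\dL^{\infty}}}\bigl[B_{\bar\phi}(\mu_{f_0}+\beta)+B_{\bar\phi}(\mu_{f_0}-\beta)-2B_{\bar\phi}(\mu_{f_0})\bigr]\dd t\leq A_n+B_n$ with $A_n=\ch(g_n)-\ch(f_0^{*\bar\phi})+\frac12\Vert\phi_{g_n}'-\bar\phi'\Vert_{\dL^2}^2\to 0$ and $B_n\to 0$; strict convexity of $B_{\bar\phi}$ then forces $\beta_{g_n,f_0^{*\bar\phi}}(t)\to 0$ a.e. This is the core technical content of the stability theorem and it is absent from your proposal. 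A secondary, harmless discrepancy: you introduce translations $\theta_n$ in the compactness step, but none are needed --- the limit is identified as the specific profile $f_0^{*\bar\phi}$ attached to the weak-limit potential $\bar\phi$, and the contradiction with the standing $\varepsilon_0$ lower bound is already obtained at $\theta_0=0$.
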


\section{Minimization problem with one constraint} \label{1contrainte}
\subsection{Existence of ground states}\label{Partie_existence1}
This section is devoted to the proof of Theorem \ref{Existence_minimizers1}. 
\subsubsection{Properties of the infimum}
For convenience, we set for $f\in E_j$, the below functional
\begin{align}\label{J}
J(f)=\mathcal{H}(f)+\!\!\iint j(f)=\iint \frac{v^2}{2}f(\theta,v)\mathrm{d}\theta\mathrm{d}v-\frac{1}{2}\int_0^{2\pi}\phi'_f(\theta)^2\mathrm{d}\theta + \iint j(f(\theta,v))\mathrm{d}\theta\mathrm{d}v.
\end{align}

\begin{lem}\label{Inf_fini_compacite1}
The variational problem (\ref{I(M)}) satisfies the following statements.
\begin{enumerate}
\item Let $j$ be a function satisfying (H1) and (H2) or $j(t)=t\ln(t)$, in both cases, the infimum (\ref{I(M)}) exists i.e $\ci(M)>-\infty$ for all $M>0$.
\item For any minimizing sequence $(f_n)_n$ of the variational problem (\ref{I(M)}), we have the following properties:
\begin{enumerate}
\item The minimizing sequence $(f_n)_n$ is weakly compact in $\dL^1([0,2\pi]\times\br)$ i.e. there exists $\bar{f}\in\dL^1([0,2\pi]\times\br)$ such that $f_n\underset{n\to+\infty}{\rightharpoonup}\bar{f}$ weakly in $\dL^1$.
\item We have $\Vert\phi_{f_n}-\phi_{\bar{f}}\Vert_{H^1}\underset{n\to+\infty}{\longrightarrow}0.$
\end{enumerate}
\end{enumerate}
\end{lem}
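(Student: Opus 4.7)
My plan rests on one a priori estimate controlling the negative potential part of the Hamiltonian by the mass, from which the rest follows by standard weak-compactness tools. Since the kernel $W$ in \eqref{phi} is bounded on $\bt$, any $f$ with $\|f\|_{\dL^1}=M$ satisfies $\|\phi_f\|_{\dL^\infty}\leq \|W\|_{\dL^\infty} M$. Multiplying the Poisson equation \eqref{Poisson} by $\phi_f$ and using $\int_0^{2\pi}\phi_f\,\dd\theta=0$ would give
\[
\int_0^{2\pi}\phi_f'(\theta)^2\,\dd\theta = -\int_0^{2\pi}\phi_f\rho_f\,\dd\theta \leq \|W\|_{\dL^\infty}M^2.
\]
Under (H1)--(H2), convexity together with $j(0)=j'(0)=0$ forces $j\geq 0$, hence $J(f)\geq -\tfrac12\|W\|_{\dL^\infty}M^2$ and finiteness is immediate. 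In the entropic case $j(t)=t\ln t$ I would instead compare $f$ to a Gaussian $g(\theta,v)=C_M\,e^{-v^2/2}$ of mass $M$: the nonnegativity $\iint f\ln(f/g)\geq 0$ produces
\[
\iint f\ln f\,\dd\theta\,\dd v + \iint \frac{v^2}{2} f\,\dd\theta\,\dd v \geq M\ln C_M,
\]
which bounds the sum of entropy and kinetic energy from below and, combined with the previous estimate, closes $\ci(M)>-\infty$.

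\textbf{Weak $\dL^1$ compactness of the minimizing sequence.} For a minimizing sequence $(f_n)$, the boundedness of $J(f_n)$ and the above estimates provide uniform bounds on $\iint v^2 f_n$ and on $\iint j(f_n)$ (in the entropic case the Gaussian comparison plays the same role once the kinetic term is isolated). The $v^2$-moment bound gives tightness at infinity in $v$ via $\iint_{|v|>R} f_n\leq R^{-2}\iint v^2 f_n$, while the superlinearity of $j$ (assumption (H2), or directly for $t\ln t$) combined with the bound on $\iint j(f_n)$ yields $\dL^1$-equi-integrability through the de la Vallée Poussin criterion. Dunford-Pettis then extracts a subsequence converging weakly in $\dL^1([0,2\pi]\times\br)$ to some $\bar f\geq 0$, which I will call $\bar f$ in what follows.

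\textbf{$H^1$ convergence of the potentials and main obstacle.} Testing the weak $\dL^1$ convergence of $f_n$ against functions depending only on $\theta$ yields the weak $\dL^1([0,2\pi])$ convergence $\rho_{f_n}\rightharpoonup \rho_{\bar f}$. Since $W$ is bounded on $\bt$ and its distributional derivative $W'$ is also bounded (with only a jump at $0$), for each fixed $\theta$ the kernels $W(\theta-\cdot)$ and $W'(\theta-\cdot)$ belong to $\dL^\infty(\bt)$, so the weak convergence of $\rho_{f_n}$ immediately produces pointwise convergence of both $\phi_{f_n}(\theta)$ and $\phi_{f_n}'(\theta)$ to $\phi_{\bar f}(\theta)$ and $\phi_{\bar f}'(\theta)$ respectively. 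Both families being uniformly bounded in $\dL^\infty$ by $\|W\|_{\dL^\infty}M$ and $\|W'\|_{\dL^\infty}M$, dominated convergence on the bounded interval $[0,2\pi]$ upgrades these pointwise convergences to $\dL^2$, yielding $\phi_{f_n}\to\phi_{\bar f}$ in $H^1$. The main technical difficulty throughout is the logarithmic case, where $j$ is not signed and fails (H1): the comparison with a Gaussian of the same mass is the essential device which simultaneously delivers the lower bound on $\ci(M)$ and the equi-integrability required by Dunford-Pettis.
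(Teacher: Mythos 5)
Your proposal is correct and follows essentially the same route as the paper: nonnegativity of $j$ under (H1)--(H2) together with a uniform bound on $\int_0^{2\pi}\phi_f'^2$ (the paper uses $\Vert\phi_f'\Vert_{\dL^\infty}\leq\Vert W'\Vert_{\dL^\infty}M$ where you use the identity $\int(\phi_f')^2=-\int\phi_f\rho_f$, an immaterial difference), the Jensen/relative-entropy comparison with a Gaussian for $j(t)=t\ln t$, Dunford--Pettis for weak $\dL^1$ compactness, and pointwise convergence of the convolutions with $W$, $W'$ upgraded by dominated convergence for the $H^1$ convergence of the potentials. Your explicit mention of de la Vallée Poussin only makes precise what the paper leaves implicit in its appeal to Dunford--Pettis.
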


\begin{proof}
Let us start with the proof of item (1). Let $f\in E_j$ such that $\Vert f\Vert_{\dL^1}=M$. If $j$ satisfies (H1) and (H2), then $j$ is nonnegative and we have 
\vspace{-0.1cm}
\[J(f)\geq -\frac{1}{2} \int_0^{2\pi}\phi_f'(\theta)^2\mathrm{d}\theta\geq -\pi\Vert W'\Vert_{\dL^{\infty}}^2 M^2 \]
and this term is finite for $f\in E_j$. Note that
\vspace{-0.1cm}
\begin{equation}\label{Norme_infini_phi'}
\Vert \phi'_f\Vert_{\dL^{\infty}}\leq \Vert W'\Vert_{\dL^{\infty}}\Vert f\Vert_{\dL^1}.
\end{equation} 
If $j(t)=t\ln(t)$, the sign of $j$ is not constant and we have to bound from below the term $\iint j(f(\theta,v))\mathrm{d}\theta\mathrm{d}v$. With Jensen's inequality and the convexity of $t\to -\ln(t)$, we get
\begin{equation}\label{jensen}
\iint f\ln\left(\frac{f}{f_1}\right)\geq \left(\iint f\right)\left[\ln\left(\iint f\right)-\ln\left(\iint f_1\right)\right].
\end{equation}
Taking $f_1(\theta,v)=e^{-\frac{v^2}{2}}$ and let $C_1=\ln\left(\iint f_1\right)$, we obtain
 \begin{equation}\label{minoration_J}
 J(f)\geq -\frac{1}{2} \int_0^{2\pi}\phi_f'^2(\theta)\mathrm{d}\theta +M[\ln(M)-C_1]\geq -\pi\Vert W'\Vert_{\dL^{\infty}}^2 M^2 +M[\ln(M)-C_1].
 \end{equation}
Each term is finite for $f\in E_j$. Thus $\mathcal{I}(M)$ exists for both functions $j$.

Then let us continue with the proof of item (2). Let $(f_n)_n$ be a minimizing sequence of (\ref{I(M)}). By the Dunford-Pettis theorem (see \cite{Dunford-Pettis}), if $\Vert f_n \Vert_{\dL^1}$, $\Vert v^2 f_n\Vert_{\dL^1}$ and $\iint j(f_n(\theta,v))\mathrm{d}\theta\mathrm{d}v$ are bounded from above, the sequence of functions $(f_n)_n$ is weakly compact in $\dL^1$. Notice that the domain in $\theta$ is bounded thus contrary to the Vlasov-Poisson system, there is no loss of mass at the inifinity in $\theta$ and $v$. Let us show that $\Vert v^2 f_n\Vert_{\dL^1}$ is bounded. We have from equality (\ref{J})
\vspace{-0.1cm}
\[\Vert v^2 f_n\Vert_{\dL^1} = 2J(f_n)+\int_0^{2\pi}\phi_{f_n}'(\theta)^2\mathrm{d}\theta-2\iint j(f_n(\theta,v)\mathrm{d}\theta\mathrm{d}v.\]
If j satisfies the hypotheses (H1) and (H2), this equality becomes
\vspace{-0.1cm}
\[\Vert v^ 2 _n\Vert_{\dL^1}\leq 2J(f_n)+2\pi\Vert W'\Vert_{\dL^{\infty}}^2 M^2.\]
Since $J(f_n)$ is bounded, we deduce in this case that $\Vert v^2 f_n\Vert_{\dL^1}$ is bounded. If $j(t)=t\ln(t)$, we have
\vspace{-0.1cm}
\begin{align*}
\Vert v^2f_n\Vert_{\dL^1}&\leq 2J(f_n)+2\pi\Vert W'\Vert_{\dL^{\infty}}^2 M^2 -2\iint f_n(\theta,v)\ln(f_n(\theta,v))\mathrm{d}\theta\mathrm{d}v,\\
&\leq 2J(f_n)+2\pi\Vert W'\Vert_{\dL^{\infty}}^2 M^2-2M[\ln(M)-C_1]+\frac{1}{2}\Vert v^2 f_n\Vert_{\dL^1}
\end{align*}
using Jensen's inequality (\ref{jensen}) with $ f_1(\theta,v)=e^{-\frac{v^2}{4}}$ and $C_1=\ln(\iint f_1)$. Thus 
\vspace{-0.1cm}
\[\Vert v^2f_n\Vert_{\dL^1}\leq 4J(f_n)+4\pi\Vert W'\Vert_{\dL^{\infty}}^2 M^2-4M[\ln(M)-C_1]\] 
and this quantity is bounded. Let us then show that $\iint j(f_n(\theta,v))\mathrm{d}\theta\mathrm{d}v$ is bounded from above. Let $j$ be a function satisfying (H1) and (H2) or $j(t)=t\ln(t)$, we have

\[\iint j(f_n(\theta,v))\mathrm{d}\theta\mathrm{d}v \leq J(f_n)+\pi\Vert W'\Vert_{\dL^{\infty}}^2 M^2.\]
Each term of this inequality is bounded, therefore this quantity is bounded. Hence by Dunford-Pettis theorem, there exists $\bar{f}\in \dL^1$ such that $f_n \underset{n\to +\infty}{\rightharpoonup}\bar{f}$ in $\dL^1_w$. This concludes the proof of item (1) of Lemma \ref{Inf_fini_compacite1}. Then, let us prove the last result.  Since 
\[\phi_{f_n}(\theta)-\phi_{\bar{f}}(\theta)=\int_{\mathbb{R}}\int_0^{2\pi}W(\theta-\tilde{\theta})[f_n(\tilde{\theta},v)-\bar{f}(\tilde{\theta},v)]\mathrm{d}\tilde{\theta}\mathrm{d}v,\]
and
\[\phi_{f_n}'(\theta)-\phi_{\bar{f}}'(\theta)=\int_{\mathbb{R}}\int_0^{2\pi}W'(\theta-\tilde{\theta})[f_n(\tilde{\theta},v)-\bar{f}(\tilde{\theta},v)]\mathrm{d}\tilde{\theta}\mathrm{d}v,\]
we immediately deduce applying dominated convergence and from the weak convergence of $f_n$ in $\dL^1([0,2\pi]\times\br)$ that $\Vert\phi_{f_n}-\phi_{\bar{f}}\Vert_{H^1}\underset{n\to+\infty}{\longrightarrow}0$. 
\end{proof}

%\noindent The following results are not properties of the infimum but a classical result related to the lower semicontinuity properties of convex nonnegative functions. For more details see \cite{Kavian}.
%\begin{lem} \label{sci} Let $(f_n)_n$ be a sequence of nonnegative functions converging weakly in $\dL^1([0,2\pi]\times\br)$ to $\bar{f}$ and let $j:\br_+\to\br_+$ be a convex continuous function such that $\Vert v^2f_n\Vert_{\dL^1}<C_1$ and $\Vert j(f_n)\Vert_{\dL^1}<C_2$ where $C_1$ and $C_2$ do not depend on n. Then we have
%\[\iint v^2\bar{f}(\theta,v)\mathrm{d}\theta\mathrm{d}v\leq \underset{n\to +\infty}{\liminf}\iint v^2f_n(\theta,v)\mathrm{d}\theta\mathrm{d}v,\]
%\[ \iint j(\bar{f}(\theta,v))\mathrm{d}\theta\mathrm{d}v\leq \underset{n\to+\infty}{\liminf}\iint j(f_n(\theta,v))\mathrm{d}\theta\mathrm{d}v.\]
%\end{lem}
\noindent The following lemma is the analogous for $j(t)=t\ln(t)$ of a well-known result about the lower semicontinuity properties of convex nonnegative functions see \cite{Kavian}. The proof is not a direct consequence of the lower semicontinuity properties of convex positive functions since $j(t)=t\ln(t)$ changes sign on $\br_+$. It will be detailed in the appendix.
\begin{lem}\label{sci_bis}
Let $(f_n)_n$ be a sequence of nonnegative functions converging weakly in $\dL^1$ to $\bar{f}$ such that $\Vert f_n\Vert_{\dL^1}=M$, $\Vert v^2 f_n\Vert_{\dL^1}\leq C_1$ and $\vert\iint f_n\ln(f_n)\vert\leq C_2$ where $M$, $C_1$ and $C_2$ do not depend on $n$, we have the following inequality
\[\iint \bar{f}\ln(\bar{f})\dd\theta\dd v\leq \underset{n\to +\infty}{\liminf}\iint f_n\ln(f_n)\dd\theta\dd v.\] 
\end{lem}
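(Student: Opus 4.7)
The main difficulty is that $j(t)=t\ln t$ is negative on $(0,1)$, so the classical weak-$\dL^1$ lower semicontinuity theorem for nonnegative convex integrands cannot be applied directly to $\iint j(f_n)$. The plan is to offset the negativity by a nonnegative Gaussian-weighted correction depending on $v$, apply the standard theorem, and then remove the correction by a parameter limit.

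First, I would use the Gibbs-type inequality derived from $x\ln x\geq x-1$ ($x\geq 0$): taking $x=t/h(v)$ with $h(v)=e^{-\alpha v^2}$, $\alpha>0$, one obtains
\[
\Phi_\alpha(v,t):= t\ln t + \alpha v^2 t - t + e^{-\alpha v^2} \geq 0,\qquad \forall\, t\geq 0,\ v\in\br.
\]
The integrand $\Phi_\alpha$ is continuous in $(v,t)$, convex in $t$ for each $v$, and nonnegative. As a byproduct, the same bound applied to $\bar f$ (combined with the Fatou estimate $\iint v^2\bar f\leq C_1$ below) shows that $(\bar f\ln \bar f)^-\in \dL^1$, so the left-hand side of the announced inequality is a well-defined element of $(-\infty,+\infty]$ to begin with.

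Next, I would invoke the classical weak-$\dL^1$ lower semicontinuity theorem for nonnegative convex normal integrands (cf. \cite{Kavian}) to obtain
\[
\iint \Phi_\alpha(v,\bar f)\,\dd\theta\,\dd v \;\leq\; \liminf_{n\to+\infty}\iint \Phi_\alpha(v,f_n)\,\dd\theta\,\dd v.
\]
Testing the weak convergence against $1\in \dL^\infty$ gives $\iint\bar f=\iint f_n=M$, so the $-\iint f$ contributions cancel, as do the constants $\iint e^{-\alpha v^2}$. After extracting a subsequence (not relabeled) along which $\iint f_n\ln f_n$ realizes its liminf and $\iint v^2 f_n\to L\in[0,C_1]$, the weak-$\dL^1$ lower semicontinuity of the linear functional $f\mapsto \iint v^2 f$ (obtained by testing against the bounded function $\min(v^2,R)$ and sending $R\to+\infty$) gives $\iint v^2\bar f\leq L$. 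What survives is
\[
\iint \bar f\ln\bar f - \liminf_{n\to+\infty}\iint f_n\ln f_n \;\leq\; \alpha\bigl(L-\iint v^2\bar f\bigr)\;\leq\;\alpha C_1,
\]
and letting $\alpha\to 0^+$ yields the conclusion.

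The main technical point is the choice of correction in the first step: the Gaussian weight $e^{-\alpha v^2}$ must be strong enough to restore pointwise nonnegativity of the full integrand for all $(v,t)$, yet weak enough that the resulting overshoot $\alpha\iint v^2 f_n$ is uniformly controlled by the available moment bound $\|v^2 f_n\|_{\dL^1}\leq C_1$, so that it can be washed out in the limit $\alpha\to 0^+$. The quadratic behaviour $-\ln h(v)=\alpha v^2$ is precisely calibrated to the $v^2$-moment assumption of the lemma, which is what makes the two parameters mesh.
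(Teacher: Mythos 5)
Your proof is correct. The pointwise inequality $t\ln t+\alpha v^2t-t+e^{-\alpha v^2}\geq 0$ does follow from $x\ln x\geq x-1$ with $x=t\,e^{\alpha v^2}$, the resulting integrand $\Phi_\alpha$ is a nonnegative Carath\'eodory integrand convex in $t$, and the bookkeeping after applying the weak-$\dL^1$ lower semicontinuity theorem (mass conservation, $\iint v^2\bar f\leq L\leq C_1$ via truncation, subsequence extraction so the liminf splits, then $\alpha\to 0^+$) is sound; the derivation also yields $\iint\bar f\ln\bar f<+\infty$ as a byproduct, so no $\infty-\infty$ issue arises. Your route is genuinely different from the paper's in its implementation, though it runs on the same engine. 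The paper takes the reference function $f_1(v)=e^{-|v|}$, writes $f_n\ln f_n$ via the relative entropy $f_n\ln\bigl(f_n/(\lambda f_1)\bigr)$, applies the lower semicontinuity theorem only to the \emph{positive part} of that quantity, and must then separately (i) prove that the contribution from the set $\{0\leq f_n\leq\lambda f_1\}$ is $o(1)$ uniformly in $n$ as $\lambda\to 0$, (ii) identify the $\lambda\to 0$ limit of $\iint\bigl(\bar f\ln(\bar f/(\lambda f_1))\bigr)_++M\ln\lambda$ by dominated convergence (which requires first establishing $\iint\bar f|\ln\bar f|<+\infty$), and (iii) pass to the limit in the linear term $\iint f_n\ln f_1=-\iint|v|f_n$ by a truncation-in-$v$ argument. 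Your additive correction $\alpha v^2t-t+e^{-\alpha v^2}$ makes the whole integrand nonnegative at once, so all three of these steps collapse: the only residue is $\alpha\bigl(L-\iint v^2\bar f\bigr)\leq\alpha C_1$, which is washed out trivially. The price is nil here because the correction is exactly calibrated to the assumed second-moment bound; the paper's small-set decomposition is more flexible (it would survive with weaker weights) but is noticeably longer and more delicate.
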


%%%%%%%%%Je ne pense pas qu'il soit utile de dÃ©tailler cette preuve.%%%%%%%%%%%%%%%
%\begin{proof}
%The proofs needs the lower semicontinuity properties of convex functions see \cite{Kavian}. First, we note that $\Vert v^2 f_n\Vert_{\mathrm{L}^1}\leq \tilde{C}$ (does not depend from n). Let $K=\{f\in L^1, \Vert v^2f\Vert_{L^1}<C \text{ (independent of f)}\}$. With Fatou's lemma, we can easily show that K is a convex closed set of $\mathrm{L}^1$. We define for $f\in K$, $\mathcal{L}(f)=\iint v^2 f(\theta,v)\mathrm{d}\theta\mathrm{d}v$, $\mathcal{L}$ is clearly convex. Let us prove that $\mathcal{L}$ is lower semicontinous on $K$. Let $h_n\underset{n\to+\infty}{\longrightarrow}h$ dans $L^1$. We can extract a subsequence which converges to $h$ a.e. So according to Fatou's Lemma, we have :
%\[ \iint\ v^2h\leq \underset{n\to+\infty}{\liminf}\iint v^2h_n.\]
%Thus $\mathcal{L}(f)\leq \underset{n\to+\infty}{\liminf} \text{ } \mathcal{L}(h_n)$ and $\mathcal{L}$ is strongly lower semicontinuous. As $\mathcal{L}$ is convex, it is weakly lower semicontinuous. Thus $f_n \underset{n\to +\infty}{\rightharpoonup}f_0$ in $L^1_w$ implies $\mathcal{L}(f)\leq 
%\underset{n\to+\infty}{\liminf} \text{ } \mathcal{L}(f_n)$ and we obtain :
%\[ \iint\ v^2f_0(\theta,v)\mathrm{d}\theta\mathrm{d}v\leq \underset{n\to+\infty}{\liminf}\iint v^2f_n(\theta,v)\mathrm{d}\theta\mathrm{d}v.\]
%\end{proof}

\subsubsection{Proof of Theorem \ref{Existence_minimizers1}}\label{Proof1}
We are now ready to prove Theorem \ref{Existence_minimizers1}.\\

\textbf{Step 1} Existence of a minimizer.\\

\noindent Let $M>0$. From item (1) of Lemma \ref{Inf_fini_compacite1}, we know that $\ci(M)$ is finite for functions $j$ satisfying (H1) and (H2) or $j(t)=t\ln(t)$.  Let us show that there exists a function $\bar{f}\in E_j$ which minimizes the variational problem (\ref{I(M)}). Let  $(f_n)_n\in E_j ^{\mathbb{N}}$ be a minimizing sequence of $\mathcal{I}(M)$. Thus $J(f_n)\underset{n\to+\infty}{\longrightarrow}\mathcal{I}(M)$ and $\Vert f_n\Vert_{\mathrm{L}^1}=M$ where $J$ is defined by (\ref{J}). From item (2) of Lemma \ref{Inf_fini_compacite1}, we know that there exists $\bar{f}\in \dL^1([0,2\pi]\times\br)$ such that $f_n \underset{n\to +\infty}{\rightharpoonup}\bar{f}$ weakly in $\dL^1([0,2\pi]\times\br)$. The $\dL^1$-weak convergence implies $\Vert \bar{f}\Vert_{\dL^1}=M$ and $\bar{f}\geq 0$ a.e. In the case where $j$ satisfies (H1) and (H2), from lower semicontinuity properties of nonnegative convex functions (see \cite{Kavian}) and from item (b) of Lemma \ref{Inf_fini_compacite1}, we get $\bar{f} \in E_j$. For $j(t)=t\ln(t)$, from lower semicontinuity properties of nonnegative convex functions and item (b) of Lemma \ref{Inf_fini_compacite1}, we get $\Vert v^2\bar{f}\Vert_{\dL^1}<+\infty$ and from Lemma \ref{sci_bis} and item (b) of Lemma \ref{Inf_fini_compacite1}, we get $\iint \bar{f}\ln(\bar{f})<+\infty$. Using Jensen's inequality (\ref{jensen}) with $f_1(\theta,v)=e^{-\frac{v^2}{2}}$, we get
\[M(\ln(M)-C_1)-\iint \frac{v^2}{2}\bar{f}\dd\theta\dd v\leq \iint \bar{f}\ln(\bar{f})\dd\theta\dd v,\]
and we conclude that $\vert \iint j(f(\theta,v))\dd\theta\dd v\vert<+\infty$ and that $\bar{f}\in E_j$. Therefore, in both cases, we have $\ci(M)\leq J(\bar{f})$. Moreover from item (2) of Lemma \ref{Inf_fini_compacite1} and classical inequalities about the lower semicontinuity properties of convex nonnegative functions see \cite{Kavian} for $j$ satisfying (H1) and (H2) and Lemma \ref{sci_bis} for $j(t)=t\ln(t)$, we have the followings inequalities:
\vspace{-0.1cm}
\[\mathcal{I}(M)=\underset{n\to+\infty}{\lim} J(f_n)\geq\iint \frac{v^2}{2}\bar{f}(\theta,v)\mathrm{d}\theta\mathrm{d}v-\frac{1}{2}\int_0^{2\pi}\phi'_{\bar{f}}(\theta)^2\mathrm{d}\theta+\iint j(\bar{f}(\theta,v))\mathrm{d}\theta\mathrm{d}v.\]
Thus $\mathcal{I}(M)\geq J(\bar{f})$.  To recap, we have proved that $\mathcal{I}(M)=J(\bar{f})$ with $\bar{f}\in E_j$ and $\Vert \bar{f} \Vert_{\dL^1}=M$ thus $\ci(M)$ is achieved.\\

\textbf{Step 2} Euler-Lagrange equation for the minimizers.\\

\noindent Let $M>0$ and $\bar{f}$ be a minimizer of $\mathcal{I}(M)$, let us write Euler-Lagrange equations satisfied by $\bar{f}$. For this purpose, for any given potential $\phi$, we introduce a new distribution function $F^{\phi}$ having mass $M$ and displaying nice monotonicity property for the energy-Casimir functional.

\begin{lem}\label{F^phi1}
Let $j$ be a function verifying (H1) and (H2) or $j(t)=t\ln(t)$ and let $M >0$. For all $\phi:[0,2\pi]\longrightarrow\br$  continuous function, there exists a unique $\lambda\in ]\min\phi, +\infty[$ for $j$ satisfying (H1), (H2) and $\lambda\in\br$ for $j(t)\!=\!t\ln(t)$ such that the function $F^{\phi}\!\!:\!\![0,2\pi]\!\times\br\longrightarrow\br_+$ defined by
\begin{equation}\label{Fphi_expression1}
\begin{cases}
&F^{\phi}(\theta,v)=(j')^{-1}\left(\lambda-\frac{v^2}{2}-\phi(\theta)\right)_+ \text{ for } j \text{ satisfying (H1), (H2)}\\ 
& F^{\phi}(\theta,v)=\exp\left(\lambda-\frac{v^2}{2}-\phi(\theta)\right) \text{ for } j(t)=t\ln(t),
\end{cases}
\end{equation}
satisfies $\Vert F^{\phi}\Vert_{\dL^1}=M$.
\end{lem}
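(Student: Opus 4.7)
The plan is to fix the continuous potential $\phi$ and study the dependence on $\lambda$ of the total mass
\[
G(\lambda) \;:=\; \Vert F^\phi \Vert_{\dL^1(\bt \times \br)},
\]
where $F^\phi$ is defined by the appropriate branch of (\ref{Fphi_expression1}) with free parameter $\lambda$. The problem will then reduce to showing that $G$ is a continuous, strictly increasing bijection from the admissible $\lambda$-interval onto $(0,+\infty)$; existence and uniqueness of $\lambda$ with $G(\lambda)=M$ will follow from the intermediate value theorem together with strict monotonicity.

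For the Maxwell--Boltzmann case $j(t)=t\ln t$, a Fubini computation will give
\[
G(\lambda) \;=\; e^{\lambda}\, \sqrt{2\pi}\, \int_0^{2\pi} e^{-\phi(\theta)}\, \dd\theta,
\]
which is a smooth, strictly increasing bijection from $\br$ onto $(0,+\infty)$, so the unique $\lambda$ with $G(\lambda) = M$ is available in closed form.

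For the case in which $j$ satisfies (H1), (H2), I would first observe that $j'\colon[0,+\infty)\to[0,+\infty)$ is a continuous, strictly increasing bijection: it starts at $j'(0)=0$ by (H1), is strictly increasing because $j''>0$, and tends to $+\infty$ because the convexity identity $j(t)=\int_0^t j'(s)\,\dd s \leq t\, j'(t)$ combined with (H2) forces $j'(t)\to+\infty$. Hence $(j')^{-1}$ is continuous and strictly increasing on $[0,+\infty)$ with $(j')^{-1}(0)=0$. For $\lambda \leq \min_{\bt}\phi$ the argument of $(j')^{-1}$ is nonpositive everywhere, so $F^\phi\equiv 0$, and the admissible range is $\lambda \in (\min_{\bt}\phi,+\infty)$. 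For such $\lambda$, the function $F^\phi$ is supported in $\bt\times\bigl[-\sqrt{2(\lambda-\min\phi)},\,\sqrt{2(\lambda-\min\phi)}\bigr]$ and pointwise bounded by $(j')^{-1}(\lambda-\min\phi)$, so $G(\lambda)<+\infty$.

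Three ingredients would then close this case. For strict monotonicity: if $\lambda_1<\lambda_2$ are in the admissible range, the set $\{\lambda_1 < v^2/2 + \phi(\theta) < \lambda_2\}$ has positive Lebesgue measure by continuity of $\phi$ on the torus, and strict monotonicity of $(j')^{-1}$ there yields $G(\lambda_1)<G(\lambda_2)$. For continuity of $G$: on any compact subinterval of $(\min\phi,+\infty)$ the family of functions $F^\phi$ is uniformly bounded and uniformly compactly supported in $v$, so continuity of $(j')^{-1}$ together with dominated convergence gives continuity of $G$. For the boundary limits: $G(\lambda)\to 0$ as $\lambda\downarrow \min\phi$ will follow from monotone convergence together with $(j')^{-1}(0)=0$, while $G(\lambda)\to +\infty$ as $\lambda\to+\infty$ will follow from monotone convergence together with $(j')^{-1}(+\infty)=+\infty$. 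The only non-routine input in the entire argument is the growth $(j')^{-1}(s)\to+\infty$ as $s\to+\infty$, which I would extract from (H2) via the convexity bound above; everything else is a direct application of the monotone and dominated convergence theorems on a bounded space in $\theta$ and a controlled compact support in $v$.
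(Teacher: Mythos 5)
Your proof is correct and follows essentially the same route as the paper: both study the total mass $K(\lambda)=\Vert F^{\phi}\Vert_{\dL^1}$ as a function of $\lambda$, establish strict monotonicity, and compute the limits at the endpoints of the admissible interval to conclude by the intermediate value theorem. Your write-up is in fact somewhat more complete than the paper's, since you explicitly verify the continuity of $\lambda\mapsto K(\lambda)$ and the surjectivity of $j'$ onto $[0,+\infty)$ (needed for $(j')^{-1}$ to be defined on all of $\br_+$), points the paper leaves implicit.
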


\begin{proof}
Let $\lambda\in\br$, we define
\vspace{-0.1cm}
\begin{equation}
\begin{cases}
&\!\!\!\!\!K(\lambda)=\int_0^{2\pi}\int_{\br}(j')^{-1}\left(\lambda-\frac{v^2}{2}-\phi(\theta)\right)_+\dd\theta\dd v \text{ for } j \text{ satisfying (H1), (H2) }\\ 
&\!\!\!\!\!K(\lambda)=\int_0^{2\pi}\int_{\br}\exp\left(\lambda-\frac{v^2}{2}-\phi(\theta)\right)\dd\theta\dd v \text{ for }j(t)=t\ln(t).
\end{cases}
\end{equation}
Since in both cases, $j$ is strictly convex and $\left\vert\left\{\frac{v^2}{2}+\phi(\theta)<\lambda\right\}\right\vert$ is strictly increasing in $\lambda$, the map $K$ is strictly increasing on $[\min \phi,+\infty[$ for $j$ satisfying (H1), (H2) and on $\br$ for $j(t)=t\ln(t)$. Note that for $j$ satisfying (H1), (H2), $K(\lambda)=0$ for $\lambda \leq \min \phi$, then we have the following limit: $\underset{\lambda\to \min \phi}{\lim}K(\lambda)=0$ by using the monotone convergence theorem. For $j(t)=t\ln(t)$, we have $\underset{\lambda\to -\infty}{\lim}K(\lambda)=0$. For both functions, we have $\underset{\lambda\to +\infty}{\lim}K(\lambda)=+\infty$ by using Fatou's lemma. Hence, there exists a unique $\lambda$ such that $\Vert F^{\phi}\Vert_{\dL^1}=M$. 
\end{proof}

We introduce a second problem of minimization, we set $M>0$. Let $j(t)=t\ln(t)$ or $j$ given by a function satisfying (H1) and (H2).
\begin{equation}\label{J1}
\cj_0=\underset{\int_0^{2\pi}\phi=0}{\inf}\cj(\phi) \text{ where } \cj(\phi)=\iint\left(\frac{v^2}{2}+\phi(\theta)\right)F^{\phi}(\theta,v)\mathrm{d}\theta\mathrm{d}v+\frac{1}{2}\int_0^{2\pi}\phi'(\theta)^2\mathrm{d}\theta+\iint j(F^{\phi}),
\end{equation}
where $F^{\phi}$ is defined by Lemma \ref{F^phi1}.

\begin{lem}\label{inequality_relou1}
We have the following inequalities:
\begin{enumerate}
\item For all $\phi \!\in\! H^2([0,2\pi])$ such that $\phi(0)=\phi(2\pi)$ and $\int_0^{2\pi}\!\phi\!=\!0$, we have $J(F^{\phi})\!\leq \!\cj(\phi)$.
\item For all $f\in E_j$ with $\Vert f\Vert_{\dL^1}=M_1$, we have 
\vspace{-0.2cm}
 \[\ci(M)\leq J(F^{\phi_f})\leq \cj(\phi_f)\!\leq\! J(f).\] 
Besides $\ci(M)=\cj_0$.
\end{enumerate}
\end{lem}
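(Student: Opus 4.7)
The plan is to treat the two items in order, with the bulk of the work being a convexity/integration-by-parts calculation that will be reused throughout.

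For item (1), I would first write out the difference
\[
\cj(\phi)-J(F^{\phi})=\iint \phi\,F^{\phi}\,\dd\theta\dd v+\tfrac12\int_0^{2\pi}\phi'(\theta)^2\dd\theta+\tfrac12\int_0^{2\pi}\phi'_{F^{\phi}}(\theta)^2\dd\theta,
\]
using the definitions of $\cj$ and $J$. The key identity is obtained by integrating by parts against the Poisson equation $\partial_\theta^2\phi_{F^\phi}=\rho_{F^\phi}-M/(2\pi)$, which together with the mean-zero hypothesis $\int_0^{2\pi}\phi=0$ gives $\iint\phi F^{\phi}=\int_0^{2\pi}\phi\,\rho_{F^\phi}=-\int_0^{2\pi}\phi'\phi'_{F^\phi}$. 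Substituting produces the perfect square
\[
\cj(\phi)-J(F^{\phi})=\tfrac12\int_0^{2\pi}\bigl(\phi'-\phi'_{F^\phi}\bigr)^2\dd\theta\ \ge\ 0,
\]
which yields (1).

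For item (2), the leftmost inequality $\ci(M)\le J(F^{\phi_f})$ is immediate: by Lemma \ref{F^phi1}, $F^{\phi_f}\in E_j$ with $\|F^{\phi_f}\|_{\dL^1}=M$, so it is admissible in (\ref{I(M)}). The middle inequality $J(F^{\phi_f})\le\cj(\phi_f)$ is exactly item (1) applied to $\phi=\phi_f$, which is $H^2$, $2\pi$-periodic with zero average by construction. The rightmost inequality $\cj(\phi_f)\le J(f)$ is where convexity of $j$ is essential: by convexity and the definition of $F^{\phi_f}$,
\[
j(f)-j(F^{\phi_f})\ \ge\ j'(F^{\phi_f})\,(f-F^{\phi_f}),
\]
and the pointwise identity $j'(F^{\phi_f})=\lambda-v^2/2-\phi_f$ holds on $\{F^{\phi_f}>0\}$ (for case (H1)-(H2) one shifts $\lambda$ by $1$ for $j(t)=t\ln t$). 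On the set $\{F^{\phi_f}=0\}$ in the (H1)-(H2) case, one checks directly that $j(f)\ge (\lambda-v^2/2-\phi_f)f$, since $\lambda-v^2/2-\phi_f\le 0$ there and $j(f)\ge 0$. Integrating and using $\iint f=\iint F^{\phi_f}=M$ (so the $\lambda$-terms and the constant correction in the $t\ln t$ case cancel), I obtain
\[
\iint[j(f)-j(F^{\phi_f})]\ \ge\ \iint\Bigl(-\tfrac{v^2}{2}-\phi_f\Bigr)(f-F^{\phi_f}).
\]
Plugging this into $J(f)-\cj(\phi_f)$ and integrating by parts again via $\iint\phi_f f=\int\phi_f\rho_f=-\int(\phi_f')^2$ makes all remaining terms cancel, yielding $J(f)\ge\cj(\phi_f)$.

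Finally, the identity $\ci(M)=\cj_0$ follows by taking infima. On one hand, $\ci(M)\le J(F^{\phi})\le\cj(\phi)$ for every admissible $\phi$, so $\ci(M)\le\cj_0$. On the other hand, $\cj_0\le\cj(\phi_f)\le J(f)$ for every admissible $f$, so $\cj_0\le\ci(M)$.

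I expect the only mildly delicate point to be the convexity estimate in the (H1)-(H2) case on the set where the cutoff $(\cdot)_+$ kicks in: one has to verify the pointwise inequality when $F^{\phi_f}=0$ using $j(0)=j'(0)=0$ and $j\ge 0$, rather than blindly invoking $j'(F^{\phi_f})=\lambda-v^2/2-\phi_f$. All the other steps are standard integrations by parts justified by the $H^1$-bounds on $\phi_f$ and periodicity.
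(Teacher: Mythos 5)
Your proposal is correct and follows essentially the same route as the paper: the perfect-square identity $\cj(\phi)=J(F^{\phi})+\tfrac12\Vert\phi'-\phi'_{F^{\phi}}\Vert_{\dL^2}^2$ for item (1), and the convexity of $j$ combined with the Euler--Lagrange relation $j'(F^{\phi_f})=\lambda-\tfrac{v^2}{2}-\phi_f$ and the mass normalization for the inequality $\cj(\phi_f)\le J(f)$, with the two infima then sandwiching each other to give $\ci(M)=\cj_0$. Your explicit treatment of the set $\{F^{\phi_f}=0\}$ in the (H1)--(H2) case is a point the paper glosses over, and it is handled correctly.
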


\begin{proof}
First we will show item (1) of this lemma. Let $\phi \in H^2([0,2\pi])$ such that $\phi(0)=\phi(2\pi)$ and $\int_0^{2\pi}\phi=0$, we have
\begin{align*}
\cj(\phi)&=J(F^{\phi})-\frac{1}{2}\Vert \phi'_{F^{\phi}}\Vert_{\dL^2}^2+\frac{1}{2}\Vert \phi'\Vert_{\dL^2}^2+\iint (\phi(\theta)-\phi_{F^{\phi}}(\theta))F^{\phi}(\theta,v)\dd \theta\dd v\\
&=J(F^{\phi})-\frac{1}{2}\Vert \phi'_{F^{\phi}}\Vert_{\dL^2}^2+\frac{1}{2}\Vert \phi'\Vert_{\dL^2}^2+\int_0^{2\pi}(\phi-\phi_{F^{\phi}})(\phi''_{F^{\phi}}+\frac{\Vert F^{\phi}\Vert_{\dL^1}}{2\pi})\dd\theta,
\end{align*}
since $\phi_{F^{\phi}}$ satisfies the Poisson equation (\ref{Poisson}). Then, after integrating by parts and gathering the terms, we get
\vspace{-0.4cm}
\begin{equation}\label{norme2}
\cj(\phi)=J(F^{\phi})+\frac{1}{2}\Vert \phi'_{F^{\phi}}-\phi'\Vert_{\dL^2}^2.
\end{equation}
Hence $\cj(\phi)\geq J(F^{\phi})$. Then, let us show the right inequality of item (2). Let $f\in E_j$ such that $\Vert f\Vert_{\dL^1}=M$. Using $\Vert F^{\phi}\Vert_{\dL^1}=M$, using the equality (\ref{Hamiltonien}), the functional can be written as
\begin{align*}
J(f)&=\cj(\phi_f)+\iint\left(\frac{v^2}{2}+\phi_f(\theta)\right)(f(\theta,v)-F^{\phi_f}(\theta,v))\dd\theta\dd v+\iint j(f)-\iint j(F^{\phi})\\
&=\cj(\phi_f)+\iint(\lambda -j'(F^{\phi_f}))(f(\theta,v)-F^{\phi_f}(\theta,v))\dd\theta\dd v+\iint j(f)-\iint j(F^{\phi}).
\end{align*}
\vspace{-0.3cm}
We get
\vspace{-0.1cm}
\begin{equation}\label{convexity1}
J(f)=\cj(\phi_f)+\iint (j(f)-j(F^{\phi_f})-j'(F^{\phi_f})(f-F^{\phi_f}))\dd\theta\dd v.
\end{equation}
The convexity of $j$ gives us the desired inequality. The others inequalities are straightforward.
\end{proof}

We are now ready to get Euler-Lagrange equations. According to Lemma \ref{inequality_relou1}, if $\bar{f}$ is a minimizer of $\ci(M)$, $\bar{\phi}:=\phi_{\bar{f}}$ is a minimizer of $\cj_0$ and $J(\bar{f})=\cj(\bar{\phi})$. Using (\ref{convexity1}), we get 
\[\iint (j(\bar{f})-j(F^{\bar{\phi}})-j'(F^{\bar{\phi}})(\bar{f}-F^{\bar{\phi}}))\dd\theta\dd v=0.\]
Then writting the Taylor's formula for the function $j(\bar{f})$ and integrating over $[0,2\pi]\times\br$, we get
\vspace{-0.1cm}
\[\iint (\bar{f}-F^{\bar{\phi}})^2\int_0^1(1-u)j''(u(\bar{f}-F^{\bar{\phi}})+F^{\bar{\phi}})\mathrm{d}u=\iint j(\bar{f})-\iint j(F^{\bar{\phi}})-\iint (\bar{f}-F^{\bar{\phi}})j'(F^{\bar{\phi}}).\]
Thus $\iint (\bar{f}-F^{\bar{\phi}})^2\int_0^1(1-u)j''(u(\bar{f}-F^{\bar{\phi}})+F^{\bar{\phi}})\mathrm{d}u\dd\theta\dd v=0$. As $j''>0$, we deduce that $\bar{f}=F^{\bar{\phi}}$. Hence, in the case where $j$ satisfies (H1) and (H2), the minimizer $\bar{f}$ has the following expression
\vspace{-0.2cm}
\[\bar{f}(\theta,v)=(j')^{-1}\left(\bar{\lambda}-\frac{v^2}{2}-\phi_{\bar{f}}(\theta)\right)_+ \text{ where } \bar{\lambda}\in\br.\] 
In the case where $j(t)=t\ln(t)$, we have 
\[\bar{f}(\theta,v)=\exp\left(\bar{\lambda}-\frac{v^2}{2}-\phi_{\bar{f}}(\theta)\right),\quad \text{ where } \bar{\lambda}\in\br.\]
Notice that in the case of $j$ satisfying (H1) and (H2), the minimizer is continuous, piecewise $\cc^1$ and compactly supported in $v$. In the case of $j(t)=t\ln(t)$, $\bar{f}$ is a function of class $\cc^{\infty}$. We have shown that any minimizer of (\ref{I(M)}) takes the above form and is at least piecewise $\cc^1$ thus clearly any minimizer is a steady state of (\ref{HMF}). The proof of Theorem \ref{Existence_minimizers1} is complete.

\subsection{Orbital stability of the ground states}
To prove the orbital stability result stated in Theorem \ref{Orbital_stability1}, we first need to prove the uniqueness of the minimizers under equimeasurability condition.
\subsubsection{Uniqueness of the minimizers under equimeasurability condition}\label{Unicite 1}

This section is devoted to the proof of Lemma \ref{Unicite1}. Let $f_1$ and $f_2$ be two equimeasurable minimizers of $\mathcal{I}(M)$. In the case where $j$ satisfies (H1) and (H2), they have the following expressions
\[f_1(\theta,v)=(j')^{-1}\left(\lambda_1-\frac{v^2}{2}-\phi_{f_1}(\theta)\right)_+,\quad
f_2(\theta,v)=(j')^{-1}\left(\lambda_2-\frac{v^2}{2}-\phi_{f_2}(\theta)\right)_+.\]
In the case where $j(t)=t\ln(t)$, they have the following expressions
\vspace{-0.1cm}
\[f_1(\theta,v)=\exp\left(\lambda_1-\frac{v^2}{2}-\phi_{f_1}(\theta)\right),\quad f_2(\theta,v)=\exp\left(\lambda_2-\frac{v^2}{2}-\phi_{f_2}(\theta)\right).\]
They can be written in the form
\begin{equation}\label{psi}
f_1(\theta,v)=G\left(\frac{v^2}{2}+\psi_1(\theta)\right), \quad f_2(\theta,v)=G\left(\frac{v^2}{2}+\psi_2(\theta)\right);
\end{equation}
where $G(t)=(j')^{-1}((-t)_+)$ or $G(t)=\exp(-t)$ with $\psi_i(\theta)=\phi_{f_i}(\theta)-\lambda_i$. In both cases, $G$ is a continuous, strictly decreasing and piecewise $\cc^1$ function. The functions $f_1$ and $f_2$ are equimeasurable so $\Vert f_1\Vert_{\dL^{\infty}}=\Vert f_2\Vert_{\dL^{\infty}}$. Since $G$ is a decreasing function, this means that $G(\min \psi_1)=G(\min \psi_2)$. Besides, $G$ being strictly decreasing and continuous on $\br$, it is one-to-one from $\br$ to $\br_+$ then $\min \psi_1=\min \psi_2=\alpha$. Thus, there exist $\theta_1$ and $\theta_2$ such that 
\vspace{-0.1cm}
\[\psi_1(\theta_1)=\psi_2(\theta_2)=\alpha,\quad \psi'_1(\theta_1)=\psi'_2(\theta_2)=0.\]
Therefore, $\psi_i$ satisfies 
\vspace{-0.3cm}
\begin{align*}
\begin{cases}
&\Psi''(\theta)=\mathcal{G}(\Psi(\theta)),\\
&\Psi'(\theta_i)=0,\\
&\Psi(\theta_i)=\psi_1(\theta_1)=\psi_2(\theta_2)=\alpha,
\end{cases}
\end{align*}
for $i=1$ or $2$ and where $\mathcal{G}(e)=\int_{\mathbb{R}}G(\frac{v^2}{2}+e)\mathrm{d}v-\frac{M}{2\pi}$. In both cases, $\mathcal{G}$ is locally Lipschitz thus according to Cauchy-Lipschitz theorem, $\psi_1=\psi_2$ up to the translation shift $\theta_2-\theta_1$. From (\ref{psi}), we get $f_1=f_2$ up to a translation shift in $\theta$.

\subsubsection{Proof of Theorem \ref{Orbital_stability1}}\label{OS1}
We will prove the orbital stability of steady states of (\ref{HMF}) which are minimizers of (\ref{I(M)}) in two steps. First, we will assume that all minimizing sequences of $\ci(M)$ are compact and deduce that all minimizer is orbitally stable. Then, we will show the compactness of all minimizing sequence.\\

\textbf{Step 1} Proof of the orbital stability\\

\noindent Assume that all minimizing sequences are compact. Let us argue by contradiction. Let $f_0$ be a minimizer and assume that $f_0$ is orbitally unstable. Then there exist $\varepsilon_0>0$, a sequence $(f_{init}^n)_n\in E_j^{\mathbb{N}}$ and a sequence $(t_n)_n\in \mathbb{R}^{+}_*$ such that $\underset{n\to+\infty}{\lim}\Vert (1+v^2)( f_{init}^n-f_0)\Vert_{\dL^1}=0$ and $\underset{n\to+\infty}{\lim}\left\vert \iint j(f_{init}^n)-\iint j(f_0)\right\vert =0$ and for all $n$, for all $\theta_0\in [0,2\pi]$ 
\vspace{-0.1cm}
\begin{align}\label{eq2}
\begin{cases}
&\Vert f^n(t_n,\theta+\theta_0,v)-f_0(\theta,v)\Vert_{\dL^1}>\varepsilon_0,\\
&\text{or }\Vert v^2(f^n(t_n,\theta+\theta_0,v)-f_0(\theta,v))\Vert_{\dL^1}>\varepsilon_0.
\end{cases}
\end{align}
where $f^n(t_n,\theta,v)$ is a solution to (\ref{HMF}) with initial data $f_{init}^n$. Let $g_n(\theta,v)=f^n(t_n,\theta,v)$, we have $J(g_n)-J(f_0)\leq J(f_{init}^n)-J(f_0) \underset{n\to +\infty}{\longrightarrow} 0$ since the system (\ref{HMF}) preservs the Casimir functionals and $\ch(f^n(t_n))\leq \ch(f_{init}^n)$. Introduce $\tilde{g_n}(\theta,v)=g_n(\theta,\frac{v}{\lambda_n})$ with $\lambda_n=\frac{M}{\Vert g_n\Vert_{\dL^1}}$. This function $\tilde{g_n}$ satisfies $\Vert \tilde{g_n}\Vert_{\dL^1}=M$, thus $0\leq J(\tilde{g_n})-J(f_0)$. Notice that
\vspace{-0.1cm}
\[J(f_0)\leq J(\tilde{g_n})\leq \lambda_n[(\lambda_n^2-1)\iint\frac{v^2}{2}g_n(\theta,v)\mathrm{d}\theta\mathrm{d}v-\frac{\lambda_n-1}{2}\int_0^{2\pi}\phi_{g_n}'^2(\theta)\mathrm{d}\theta+J(f_{init}^n)].\]
It is clear that  $\lambda_n\underset{n\to+\infty}{\longrightarrow}1$. Moreover using inequality (\ref{Norme_infini_phi'}), we show that $\left(\int_0^{2\pi}\phi_{g_n}'^2(\theta)\mathrm{d}\theta\right)_n$ is a bounded sequence. Then, arguing as in the proof of item (2) of Lemma \ref{Inf_fini_compacite1}, we get $\left(\Vert v^2g_n\Vert_{\dL^1}\right)_n$  is bounded sequence. Thus, $J(f_0)\leq \underset{n\to +\infty}{\lim} J(\tilde{g_n})\leq J(f_0)$. Hence $(\tilde{g_n})_n$ is a minimizing sequence of $\mathcal{I}(M)$. According to our assumption, it is a compact sequence in $E_j$: there exists $\tilde{g}\in E_j$ such that, up to an extraction of a subsequence, we have 
\begin{equation}\label{eq1}
\Vert g_n-\tilde{g}\Vert_{\dL^1}\underset{n\to +\infty}{\longrightarrow}0, \quad \Vert v^2(g_n-\tilde{g})\Vert_{\dL^1} \underset{n\to +\infty}{\longrightarrow}0,\quad \left\vert\iint j(g_n)-\iint j(\tilde{g})\right\vert\underset{n\to +\infty}{\longrightarrow}0.
\end{equation}
According to the conservation properties of HMF Poisson system, we have 
\[\vert\{(\theta,v)\in[0,2\pi]\times\mathbb{R},g_n(\theta,v)>t\}\vert=\vert\{(\theta,v)\in[0,2\pi]\times\mathbb{R}, f_{init}^n(\theta,v)>t\}\vert.\]
Let $\varepsilon>0$, we notice that $\forall\, 0<t<\varepsilon$ 
\vspace{-0.2cm}
\begin{align*}
\begin{cases}
&\{g_n>t\}\subset\left\{\{\vert g_n-\tilde{g}\vert<\varepsilon\}\cap\{\tilde{g}>t-\varepsilon\}\right\}\cup\{\vert g_n-\tilde{g}\vert\geq \varepsilon\},\\
&\{g_n>t\}\supset\{\vert g_n-\tilde{g}\vert <\varepsilon\}\cap\{\tilde{g}>t+\varepsilon\}.
\end{cases}
\end{align*}
Passing to the limit, we get 
\[\underset{n\to+\infty}{\limsup}\vert\{g_n>t\}\vert \leq \vert\{\tilde{g}>t-\varepsilon\}\vert,\qquad \underset{n\to+\infty}{\liminf}\vert\{g_n>t\}\vert\geq \vert\{ \tilde{g}>t+\varepsilon\}\vert.\]
Then we pass to the limit as $\varepsilon\to 0$ and we get up to an extraction of a subsequence; 
\[\underset{n\to+\infty}{\lim}\vert\{g_n>t\}\vert=\vert\{\tilde{g}>t\}\vert\quad \text{ for almost all } t>0.\] 
In the same way, we obtain up to an extraction of a subsequence
\[\underset{n\to+\infty}{\lim}\vert\{f_{init}^n>t\}\vert=\vert\{f_0>t\}\vert\quad \text{ for almost all } t>0.\] 
Noticing that the functions $t\to\vert\{f_0>t\}\vert$ and $t\to\vert\{\tilde{g}>t\}\vert$ are right-continuous, we get
\vspace{-0.1cm}
\[\vert\{f_0>t\}\vert=\vert\{\tilde{g}>t\}\vert,\quad \forall t\geq 0.\]
Thus $f_0$ and $g$ are two equimeasurable minimizers of $\mathcal{I}(M)$ but according to the previous uniqueness result stated in Lemma \ref{Unicite1}, $f_0=\tilde{g}$ up to a translation shift. To conclude, (\ref{eq1}) contradicts (\ref{eq2}) and we have proved that $f_0$ is orbitally stable.\\

\textbf{Step 2} Compactness of the minimizing sequences\\

\noindent Let $j$ satisfying (H1) and (H2) or $j(t)=t\ln(t)$. Let $(f_n)_n$ be a minimizing sequence of $\mathcal{I}(M)$. Let us show that $(f_n)_n$ is compact in $E_j$ i.e. that there exists $f_0 \in E_j$ such that $\underset{n\to+\infty}{\lim}\Vert (1+v^2)(f_n-f_0)\Vert_{\dL^1}=0$ and $\underset{n\to+\infty}{\lim}\left\vert \iint j(f_{init}^n)-\iint j(f_0)\right\vert =0$ up to an extraction of a subsequence. Arguing as before in Section \ref{Proof1}, there exists $f_0\in E_j$ such that $\Vert f_0\Vert_{\dL^1}=M$, $f_n \underset{n\to+\infty}{\rightharpoonup}f_0$ in $\dL^1_w$ up to an extraction of a subsequence and $J(f_0)=\mathcal{I}(M)$. From this last equality and the strong convergence in $\dL^2$ of the potential established in item (b) of Lemma \ref{Inf_fini_compacite1}, we deduce that
\vspace{-0.1cm}
\begin{equation}\label{somme}
\underset{n\to+\infty}{\lim}\left(\iint \frac{v^2}{2}f_n(\theta,v)\mathrm{d}\theta\mathrm{d}v+\iint j(f_n)\right)=\iint \frac{v^2}{2}f_0(\theta,v)\mathrm{d}\theta\mathrm{d}v+\!\iint j(f_0).
\end{equation}
From equality (\ref{somme}), from lower semicontinuity properties of nonnegative convex functions (see \cite{Kavian}) and from Lemma \ref{sci_bis}, we get
\vspace{-0.2cm}
\begin{equation}\label{lim1}
\iint j(f_n)\underset{n\to+\infty}{\longrightarrow}\iint j(f_0),\quad \text{ and } \quad \iint \frac{v^2}{2}f_n(\theta,v)\dd\theta\dd v\underset{n\to+\infty}{\longrightarrow}\iint \frac{v^2}{2}f_0(\theta,v)\dd\theta\dd v.
\end{equation}
There remains to show that $\Vert v^2(f_n-f_0)\Vert_{\dL^1}\underset{n\to+\infty}{\longrightarrow}0$ and $\Vert f_n-f_0\Vert_{\dL^1}\underset{n\to+\infty}{\longrightarrow}0$.\\

In the case of $j(t)=t\ln(t)$, the Csiszar-Kullback's inequality, see \cite{Csiszar_Kullback}, gives us the strong convergence in $\dL^1([0,2\pi]\times\br)$.  In our case, this Csiszar-Kullback's inequality writes
\vspace{-0.1cm}
\begin{equation}\label{CK}
\Vert f_n-f_0\Vert_{\dL^1}^2\leq 2M\iint f_n\ln\left(\frac{f_n}{f_0}\right).
\end{equation}
Hence, to prove the strong convergence in $\dL^1([0,2\pi]\times\br)$, it is sufficient to prove that 
\vspace{-0.1cm}
\[\iint f_n\ln\left(\frac{f_n}{f_0}\right)\dd\theta\dd v\underset{n\to+\infty}{\longrightarrow}0.\]

Since $f_0(\theta,v)=\exp\left(\lambda_0-\frac{v^2}{2}-\phi_{f_0}(\theta)\right)$, we have
\vspace{-0.1cm}
\begin{equation}\label{CKK}
\iint\!\! f_n\ln\left(\frac{f_n}{f_0}\right)\dd\theta\dd v\!=\!J(f_n)-J(f_0)+\frac{1}{2}(\Vert \phi_{f_n}'\Vert_{\dL^2}^2-\Vert \phi_{f_0}'\Vert_{\dL^2}^2)+\!\!\iint \!\!\phi_{f_0}(f_n-f_0).
\end{equation}
Note that
\begin{enumerate}
\item $J(f_n)-J(f_0)\underset{n\to+\infty}{\longrightarrow}0$ since $(f_n)_n$ is a minimizing sequence of $\ci(M)$,
\item $\Vert \phi_{f_n}'\Vert_{\dL^2}^2-\Vert \phi_{f_0}'\Vert_{\dL^2}^2\underset{n\to+\infty}{\longrightarrow}0$ since of the strong convergence in $\dL^2([0,2\pi]\times\br)$ of the potential established in item (b) of Lemma \ref{Inf_fini_compacite1},
\item $\iint \phi_{f_0}(\theta)(f_n(\theta,v)-f_0(\theta,v))\dd\theta\dd v\underset{n\to+\infty}{\longrightarrow}0$ since of the weak convergence of $f_n$ to $f_0$ in $\dL^1([0,2\pi]\times\br)$.
\end{enumerate}
Hence with (\ref{CK}) and (\ref{CKK}), we get $\Vert f_n-f_0\Vert_{\dL^1}\underset{n\to+\infty}{\longrightarrow}0$. From this strong convergence in $\dL^1([0,2\pi]\times\br)$, we deduce the a.e. convergence of $f_n$ and with Brezis-Lieb's lemma, and the second limit in (\ref{lim1}), we get the strong convergence of $v^2f_n$ in $\dL^1([0,2\pi]\times\br)$. Hence the sequence $(f_n)_n$ is compact in $E_j$.\\

In the case of $j$ satisfying (H1) and (H2), we again use Brezis-Lieb's lemma, see \cite{Brezis_Lieb}, to get the strong convergence of $f_n$ in $\dL^1$. We already have that $\Vert f_n\Vert_{\dL^1}\underset{n\to+\infty}{\longrightarrow}\Vert f_0\Vert_{\dL^1}$. Hence, with Brezis-Lieb's lemma, it is sufficient to show that $f_n\underset{n\to+\infty}{\longrightarrow}f_0$ a.e. Writing  the Taylor formula for the function $j(f_n)$ and integrating over $[0,2\pi]\times\mathbb{R}$, we get
\vspace{-0.1cm}
\begin{equation}\label{TL2}
\iint \!(f_n-f_0)^2\!\!\int_0^1\!(1\!-\!u)j''(u(f_n\!-\!f_0)\!+\!f_0)\mathrm{d}u=\iint\! j(f_n)-\!\!\iint \!j(f_0)-\!\!\iint\! (f_n-f_0)j'(f_0).
\end{equation}
Note also that
\begin{enumerate}
\item $\iint j(f_n)\underset{n\to+\infty}{\longrightarrow}\iint j(f_0)$,
\item $\iint j'(f_0)(f_n-f_0)\!\!\underset{n\to+\infty}{\longrightarrow}\!\!0$ since $f_n\!\!\underset{n\to+\infty}{\rightharpoonup}\!\!f_0$ $\dL^1_w$. Note that  $j'(f_0)\in \dL^{\infty}$ since $f_0 \in \dL^{\infty}$.
\end{enumerate}
Hence with Fubini-Tonelli 's theorem, we get
\vspace{-0.1cm}
\[\iint (f_n-f_0)^2j''((f_n-f_0)u+f_0)\underset{n\to+\infty}{\longrightarrow}0 \text{ for almost all } u\in[0,1].\] 
Let $u_0\in [0,1]$ such that $\iint (f_n-f_0)^2j''((f_n-f_0)u_0+f_0)\underset{n\to+\infty}{\longrightarrow}0$. Up to an extraction of a subsequence, we have
\vspace{-0.1cm}
\[(f_n-f_0)^2j''((f_n-f_0)u_0+f_0)\underset{n\to+\infty}{\longrightarrow}0 \text{ for almost all } (\theta,v)\in [0,2\pi]\times\mathbb{R}.\] 
This means there exists $\Omega_{u_0}$ such that $\vert \Omega_{u_0}\vert =0$ and $\forall (\theta,v)\in [0,2\pi]\times\mathbb{R}\setminus\Omega_{u_0}$, 
\begin{equation}\label{valadh}
(f_n(\theta,v)-f_0(\theta,v))^2j''(u_0(f_n(\theta,v)-f_0(\theta,v))+f_0(\theta,v))\underset{n\to+\infty}{\longrightarrow}0.
\end{equation}
Let us show that, up to a subsequence, $f_n(\theta,v)\underset{n\to+\infty}{\longrightarrow}f_0(\theta,v)$ for $(\theta,v)\in [0,2\pi]\times\mathbb{R}\setminus\Omega_{u_0}$. If $u_0=0$, we directly have the wanted convergence. Then let $u_0\in ]0,1]$ and let $l(\theta,v)$ be a limit point of $(f_n(\theta,v))_n$. Assume that $l(\theta,v)\neq f_0(\theta,v)$.
\begin{itemize}
\item[$\bullet$] First case: $l(\theta,v)<+\infty$. As $j''$ is continous and $j''>0$, we have
\begin{align*}
(f_n(\theta,v)-f_0(\theta,v))^2&j''(u_0(f_n(\theta,v)-f_0(\theta,v))+f_0(\theta,v))\\
&\underset{n\to+\infty}{\longrightarrow}(l(\theta,v)-f_0(\theta,v))^2j''(u_0(l(\theta,v)-f_0(\theta,v))+f_0(\theta,v))>0.
\end{align*}
This contradicts (\ref{valadh}).
\item[$\bullet$] Second case: $l(\theta,v)=+\infty$. Thus:
\begin{equation}\label{secondcas}
(f_n(\theta,v)-f_0(\theta,v))^2\underset{n\to+\infty}{\longrightarrow}+\infty\,\text{ and } \,
u_0(f_n(\theta,v)-f_0(\theta,v))+f_0(\theta,v)\underset{n\to+\infty}{\longrightarrow}+\infty.
\end{equation}
However the hypothesis (H2) implies that $t^2j''(t)$ does not converge to $0$ when $t$ goes to infinity. Indeed, arguing by contradiction, integrating twice over $[x_0,x]$ and taking the limit for $x\to +\infty$, we get
\vspace{-0.4cm}
\[\forall \varepsilon>0,\, \exists M>0, \text{ such that } \forall x>M,\, 0\leq \frac{j(x)}{x}\leq\frac{\varepsilon}{x_0}+j'(x_0).\]
This inequality contradicts (H2) then $t^2j''(t)$ does not converge to $0$ when $t$ goes to infinity and (\ref{secondcas}) contradicts (\ref{valadh}).
\end{itemize}
Hence $f_n\underset{n\to+\infty}{\longrightarrow}f_0$ a.e and we conclude using the Brezis-Lieb's lemma. The minimizing sequence is compact in $E_j$.\\

\section{Problem with two constraints} \label{2contraintes}

\subsection{Toolbox for the two constraints problem}
In this section, we define a new function denoted by $F^{\phi}$. Note that the function $F^{\phi}$ of (\ref{Fphi_expression}) differs from the one of Section \ref{Proof1}. However it can be seen as an equivalent of (\ref{Fphi_expression1}) in the sense that both functions $F^{\phi}$ satisfy the constraints of the one and two constraints problem respectively. There will be no possible confusion since the function $F^{\phi}$ of Section \ref{Proof1} will no longer be used. First, thank to this new function, the existence of minimizers is shown. Indeed the sequence $(F^{\phi_{f_n}})_n$ has better compactness properties than the sequence $(f_n)_n$. Then, we get the compactness of the sequence $(f_n)_n$ via the sequence $(F^{\phi_{f_n}})_n$ thanks to monotonicity properties of $\ch$ with respect to the transformation $F^{\phi}$. These properties will be detailed in Lemma \ref{inequality_relou}. More precisely, we have the following lemma:
\begin{lem}\label{F^phi}
Let $j$ be a function verifying (H1), (H2) and (H3) and let $M_1, M_j >0$. For all $\phi:[0,2\pi]\longrightarrow\br$  continuous function, there exists a unique pair $(\lambda,\mu)\in \br\times\br_-^*$ such that the function $F^{\phi}:[0,2\pi]\times\br\longrightarrow\br_+$ defined by
\begin{equation}\label{Fphi_expression}
F^{\phi}(\theta,v)=(j')^{-1}\left(\frac{\frac{v^2}{2}+\phi(\theta)-\lambda}{\mu}\right)_+ \text{satisfies } \Vert F^{\phi}\Vert_{\dL^1}=M_1, \,  \Vert j(F^{\phi})\Vert_{\dL^1}=M_j.
\end{equation}
\end{lem}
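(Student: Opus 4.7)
The plan is to reduce the two-constraint system to a one-parameter monotone analysis. Set $u = -\mu > 0$ and
\[ X(\theta, v) := \frac{\lambda - v^2/2 - \phi(\theta)}{u}, \qquad s := (j')^{-1}, \]
so that $F^\phi = s(X)_+$ and $j'(s(X)) = X$ on the open support $A_u := \{X > 0\}$. Thanks to (H1)--(H2), $s$ is a continuous, strictly increasing bijection from $[0,+\infty)$ onto $[0,+\infty)$ with $s(0)=0$. Introduce
\[ K_1(\lambda,u) := \iint F^\phi \, \dd\theta\dd v, \qquad K_j(\lambda,u) := \iint j(F^\phi) \, \dd\theta\dd v. \]
For fixed $u>0$, $K_1$ is continuous and strictly increasing in $\lambda$, vanishes for $\lambda\leq\min\phi$ and tends to $+\infty$ as $\lambda\to+\infty$ (monotone convergence; both the support and the pointwise values of $F^\phi$ blow up). Hence there is a unique $\lambda=\lambda(u)\in(\min\phi,+\infty)$ with $K_1(\lambda(u),u)=M_1$, depending continuously on $u$. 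Define $R(u):=K_j(\lambda(u),u)$; the lemma reduces to showing $R$ is a bijection from $(0,+\infty)$ onto $(0,+\infty)$.

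\textbf{Strict monotonicity of $R$.} Since $F^\phi$ and $j(F^\phi)$ vanish at $\partial A_u$, differentiation under the integral is legitimate; using $j'(s(X))=X$ on $A_u$, a direct computation gives
\[ \partial_\lambda K_1 = \tfrac{1}{u}\int_{A_u}\! s'(X),\ \partial_u K_1 = -\tfrac{1}{u}\int_{A_u}\! X s'(X),\ \partial_\lambda K_j = \tfrac{1}{u}\int_{A_u}\! X s'(X),\ \partial_u K_j = -\tfrac{1}{u}\int_{A_u}\! X^2 s'(X). \]
Implicit differentiation of $K_1(\lambda(u),u)=M_1$ then yields
\[ R'(u) = \frac{\bigl(\int_{A_u} X s'(X)\bigr)^2 - \bigl(\int_{A_u} s'(X)\bigr)\bigl(\int_{A_u} X^2 s'(X)\bigr)}{u\int_{A_u} s'(X)}. \]
Cauchy--Schwarz with weight $s'(X)\,\dd\theta\dd v$ shows the numerator is $\leq 0$, with strict inequality because $X$ depends nontrivially on $v^2$ and is therefore nonconstant on $A_u$. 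Hence $R'(u) < 0$, so $R$ is continuous and strictly decreasing.

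\textbf{Limits and conclusion.} As $u\to 0^+$: if $\lambda(u)\geq \min\phi+\delta$ along a subsequence, then on the fixed positive-measure set $\{v^2/2+\phi<\min\phi+\delta/2\}$ one has $X\geq \delta/(2u)$, so $F^\phi\geq s(\delta/(2u))\to+\infty$ pointwise and $K_1\to+\infty$, a contradiction. Hence $\lambda(u)\to\min\phi$, $|A_u|\to 0$ by continuity of measure, and Jensen's inequality yields $R(u)\geq |A_u|\,j(M_1/|A_u|)\to+\infty$ by (H2). As $u\to+\infty$: the rescaling $v=\sqrt{u}\,w$ recasts $K_1=M_1$ as
\[ \sqrt{u}\iint s\!\left(\frac{\lambda}{u}-\frac{w^2}{2}-\frac{\phi}{u}\right)_+\!\dd\theta\dd w = M_1, \]
which forces $\lambda(u)/u\to 0$ (otherwise the inner integral stays bounded below by a positive constant and $K_1\to+\infty$). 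Now (H3) gives the pointwise bound $j(s(X))\leq X\,s(X)/p$ on $A_u$, so
\[ R(u)\leq \frac{X_{\max}}{p}\iint F^\phi\,\dd\theta\dd v = \frac{(\lambda(u)-\min\phi)\,M_1}{p\,u}\underset{u\to+\infty}{\longrightarrow}0. \]
Combining, $R$ is a continuous strictly decreasing bijection from $(0,+\infty)$ onto $(0,+\infty)$, yielding a unique $u$ with $R(u)=M_j$ and hence the unique admissible $(\lambda,\mu)=(\lambda(u),-u)$. The main obstacle is the $u\to+\infty$ limit: (H3) is indispensable there, since it is what converts the nonlocal functional $K_j$ into a pointwise quantity controllable by $X_{\max}$, something neither (H1) nor (H2) alone would provide.
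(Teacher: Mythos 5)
Your proof is correct in substance and reaches the same structure as the paper's (a unique $\lambda(\mu)$ from the mass constraint, then a one-parameter monotonicity-plus-limits argument for the Casimir constraint), but the central monotonicity step is handled by a genuinely different mechanism. The paper never differentiates: after an integration by parts it rewrites $K$ and $G$ as integrals of $\frac{1}{j''\circ(j')^{-1}(t)}\,(\mu t+\lambda-\phi(\theta))_+^{1/2}$, subtracts the two mass identities, adds the judiciously chosen constant $C_0=\frac{\lambda_1-\lambda_2}{\mu_1-\mu_2}$ to the weight, and lands on the product $(A_{\mu_1}-A_{\mu_2})\bigl[(A_{\mu_1})_+^{1/2}-(A_{\mu_2})_+^{1/2}\bigr]\geq 0$ of a monotone function with its argument's increment. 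Your Cauchy--Schwarz defect $\bigl(\int Xs'\bigr)^2-\bigl(\int s'\bigr)\bigl(\int X^2s'\bigr)<0$ is precisely the infinitesimal version of that finite-difference positivity, so the two arguments are morally the same quadratic form; yours is shorter and makes the strictness transparent (nonconstancy of $X$ in $v$), while the paper's buys robustness: it needs only continuity of $\mu\mapsto\lambda(\mu)$ (which it proves by hand via Fatou and monotone convergence), not differentiability. That is the one place where you are a bit cavalier: your computation requires $K_1,K_j$ to be $C^1$ in $(\lambda,u)$ and $\lambda(u)$ to be differentiable via the implicit function theorem, and the formal derivatives involve $s'(X)=1/j''(s(X))$, which blows up at the free boundary $\{X=0\}$ whenever $j''(0^+)=0$ (e.g.\ $j(t)=t^p$, $p>2$); saying that $F^\phi$ and $j(F^\phi)$ vanish on $\partial A_u$ disposes of the moving-boundary terms but not of the local integrability and uniform domination of $s'(X)$ needed to differentiate under the integral. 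This is fixable under (H1)--(H3) (the singularity is integrable, as the paper's own integrated-by-parts formula implicitly requires), but it should be said. Your treatment of the limits is essentially the paper's (Jensen plus (H2) as $\mu\to0^-$, the bound $j(t)\leq tj'(t)/p$ from (H3) as $\mu\to-\infty$), with a cleaner rescaling argument for $\lambda(u)/u\to0$ in place of the paper's Fatou argument, and your closing remark correctly identifies where (H3) is indispensable.
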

\begin{proof}
Let $(\lambda,\mu)\in\br\times\br_-^*$, we define
\vspace{-0.1cm}
\[K(\lambda,\mu)=\int_0^{2\pi}\int_{\br}(j')^{-1}\left(\frac{\frac{v^2}{2}+\phi(\theta)-\lambda}{\mu}\right)_+\dd\theta\dd v.\] 
We set $\mu\in\br_-^*$, since $j$ is strict convex and $\left\vert\left\{\frac{v^2}{2}+\phi(\theta)<\lambda\right\}\right\vert$ is strictly increasing in $\lambda$, the map $\lambda\to K(\lambda,\mu)$ is strictly increasing on $[\min \phi,+\infty[$. Note that $K(\lambda,\mu)=0$ for $\lambda\leq \min \phi$. We also have the following limits:$\underset{\lambda\to \min \phi}{\lim}K(\lambda,\mu)\!=\!0$ using the monotone convergence theorem and $\underset{\lambda\to +\infty}{\lim}K(\lambda,\mu)=+\infty$ using Fatou's lemma. Therefore, there exists a unique $\lambda=\lambda(\mu)\in ]\min \phi,+\infty[$ such that $\Vert F^{\phi}\Vert_{\dL^1}=M_1$.  We now define the map:
\vspace{-0.2cm}
\begin{align*}
G: \begin{cases}&\br_-^*\longrightarrow \br_+\\
&\mu\to \int_0^{2\pi}\int_{\br}j\circ(j')^{-1}\left(\frac{\frac{v^2}{2}+\phi(\theta)-\lambda(\mu)}{\mu}\right)_+\dd\theta\dd v.
\end{cases}
\end{align*}
Our purpose is to show that $G$ is continuous, strictly increasing on $\br_-^*$ and that $\underset{\mu\to-\infty}{\lim} G(\mu)=0$ and $\underset{\mu\to 0}{\lim} \, G(\mu)=+\infty$. This claim would imply that there exists a unique $\mu\in\br_-^*$ such that $G(\mu)=M_j$ and the proof of the lemma will be ended.

To get the monotony of $G$ and the continuity of $\lambda$ on $\br_-^*$, we first have to show the decrease of $\lambda$. Since $K(\lambda(\mu),\mu)\!\!=\!\!M_1$, using that both functions $\lambda\! \mapsto\! \!K(\lambda, \mu)$ and $\mu\!\mapsto \!\!K(\lambda, \mu)$ are increasing, we get that the map $\lambda$ is nonincreasing on $\br_-^*$.
According to the definition of $G$, it is sufficient to show that $\mu\to\lambda(\mu)$ is continuous on $\br_-^*$ to get the continuity of $G$ on $\br_-^*$. To prove the continuity of $\lambda$, we argue by contradiction. Assume that $\mu\to\lambda(\mu)$ is discontinous at $\mu_0<0$. Assume on the one hand that $\lambda$ is left-discontinous, ie there exist $\varepsilon_0>0$ and an increasing sequence $(\mu_n)_n\in (\br_-^*)^{\bn}$ converging to $\mu_0$ such that $\vert \lambda(\mu_n)-\lambda(\mu_0)\vert>\varepsilon_0$. $\lambda$ being nonincreasing and $j$ being convex, we get 
\[M_1\geq K(\lambda(\mu_0)+\varepsilon_0, \mu_n).\] 
Applying Fatou's lemma, we have \[K(\lambda(\mu_0)+\varepsilon_0, \mu_n)\geq K(\lambda(\mu_0)+\varepsilon_0,\mu_0).\] 
Since $K(\lambda(\mu_0)+\varepsilon_0,\mu_0)>M_1$, we get a contradiction and $\lambda$ is left-continuous. On the other hand, assume that $\lambda$ is right-discontinuous at $\mu_0<0$, ie there exist $\varepsilon_0>0$ and a decreasing sequence $(\mu_n)_n\in (\br_-^*)^{\bn}$ converging to $\mu_0$ such that $\vert \lambda(\mu_n)-\lambda(\mu_0)\vert>\varepsilon_0$. $\lambda$ being nonincreasing and $j$ being convex, we get 
\[M_1 \leq K(\lambda(\mu_0)-\varepsilon_0, \mu_n).\] 
Using a generalization of the Beppo Levi's theorem for the decreasing functions, we get 
\[K(\lambda(\mu_0)-\varepsilon_0, \mu_n)\leq K(\lambda(\mu_0)-\varepsilon_0,\mu_0).\] 
Since $K(\lambda(\mu_0)-\varepsilon_0,\mu_0)<M_1$, we get a contradiction and $\lambda$ is right-continuous. We conclude that the map $\lambda$ is continuous on $\br_-^*$. Let us show the increase of $G$. Before that, notice that $K(\lambda, \mu)$ can be written as 
\begin{equation}\label{new_expression_K}
K(\lambda, \mu)=2\sqrt{2}\int_0^{2\pi}\int_0^{+\infty}\frac{1}{j''\circ (j')^{-1}(t)}\sqrt{(\mu t+\lambda-\phi(\theta))_+}\dd t\dd\theta,
\end{equation}
by performing a change of variables: $t=\frac{\frac{v^2}{2}+\phi(\theta)-\lambda}{\mu}$ and an integration by parts. By doing the exact same thing for $G$, we can also write
\begin{equation}\label{new_expression_G}
G(\mu)=2\sqrt{2}\int_0^{2\pi}\int_0^{+\infty}\frac{t}{j''\circ (j')^{-1}(t)}\sqrt{(\mu t+\lambda(\mu)-\phi(\theta))_+}\dd t\dd\theta.
\end{equation}
Let $\mu_1, \mu_2\in \br_-^*$ be such that $\mu_1\neq\mu_2$. Thanks to the previous step, there exists for $i=1,2$, $\lambda_i:=\lambda(\mu_i)\in]\min\phi,+\infty[$ such that $K(\lambda_i,\mu_i)=M_1$. Hence, by using the equality (\ref{new_expression_K}) and by setting for $i=1,2$, $A_{\mu_i}:=\mu_i t +\lambda_i-\phi(\theta)$, we get
\begin{equation}\label{new_expression_dif_K}
K(\lambda_1,\mu_1)-K(\lambda_2,\mu_2)=2\sqrt{2}\int_0^{2\pi}\int_0^{+\infty}\frac{1}{j''\circ (j')^{-1}(t)}[(A_{\mu_1})_+^{\frac{1}{2}}-(A_{\mu_2})_+^{\frac{1}{2}}]\dd t\dd\theta=0.
\end{equation}
Then, by using (\ref{new_expression_G}) and (\ref{new_expression_dif_K}), we have for all $C\in\br$
\[G(\mu_1)-G(\mu_2)=2\sqrt{2}\int_0^{2\pi}\int_0^{+\infty}\frac{t+C}{j''\circ (j')^{-1}(t)}[(A_{\mu_1})_+^{\frac{1}{2}}-(A_{\mu_2})_+^{\frac{1}{2}}]\dd t\dd\theta.\]
We set $C_0:=\frac{\lambda_1-\lambda_2}{\mu_1-\mu_2}$ and we get
\begin{equation}
(\mu_1-\mu_2)(G(\mu_1)-G(\mu_2))=2\sqrt{2}\int_0^{2\pi}\int_0^{+\infty}\frac{(A_{\mu_1}-A_{\mu_2})}{j''\circ (j')^{-1}(t)}[(A_{\mu_1})_+^{\frac{1}{2}}-(A_{\mu_2})_+^{\frac{1}{2}}]\dd t\dd\theta.
\end{equation}
Since the function $t\mapsto (t)_+^{\frac{1}{2}}$ is nondecreasing, we have $(A_{\mu_1}-A_{\mu_2})[(A_{\mu_1})_+^{\frac{1}{2}}-(A_{\mu_2})_+^{\frac{1}{2}}]\geq 0$. Hence $G$ is a  nondecreasing function. We now notice that $(A_{\mu_1}-A_{\mu_2})[(A_{\mu_1})_+^{\frac{1}{2}}-(A_{\mu_2})_+^{\frac{1}{2}}]>0$ for $\theta \in \{\phi< \lambda_1\}$ and $t\in \,]0,\frac{\phi(\theta)-\lambda_1}{\mu_1}[$. Besides the measure of the set $\{\phi < \lambda_1\}$ is strictly positive because $\lambda_1>\min\phi$. Thus, the function $G$ is  strictly increasing on $\br_-^*$.

It remains to compute the limits of G. First let us prove that $\underset{\mu\to -\infty}{\lim}\lambda(\mu)=+\infty$. The function $\lambda$ being nonincreasing, $\underset{\mu\to -\infty}{\lim}\lambda(\mu)$ exists and we denote it by $\lambda_{\infty}$. Assume that $\lambda_{\infty}<\infty$.  We have
\vspace{-0.2cm}
\[M_1=K(\lambda(\mu),\mu)\leq K(\lambda_{\infty},\mu)\underset{\mu\to -\infty}{\longrightarrow}0.\]
This is a contradiction then $\underset{\mu\to -\infty}{\lim}\lambda(\mu)=+\infty$. Then let us prove that $\underset{\mu\to 0^-}{\lim}\lambda(\mu)=\min\phi$. $\lambda$ being nonincreasing, $\underset{\mu\to 0^-}{\lim}\lambda(\mu)$ exists and we denote it by $\lambda_0$. We have to deal with three cases. First, notice that (H2) and (H3) imply $\underset{t\to+\infty}{\lim}(j')^{-1}(t)=+\infty$, then we get
\begin{align*}
\begin{cases}
&\text{if }\lambda_0>\min \phi\,: \, M_1=K(\lambda(\mu),\mu)>K(\lambda_0,\mu)\underset{\mu\to 0^-}{\longrightarrow}+\infty, \quad \text{ applying Fatou's lemma, } \\
&\text{if }\lambda_0<\min\phi\, :\, M_1=K(\lambda(\mu),\mu)< K(\frac{\min\phi+\lambda_0}{2},\mu)=0 \quad \text{ since } \frac{\min\phi+\lambda_0}{2}<\min\phi.
\end{cases}
\end{align*}
Hence only the third case can occur ie $\underset{\mu\to 0^-}{\lim}\lambda(\mu)=\min\phi$.\\

Let us continue with the computation of $\underset{\mu\to 0^-}{\lim}G(\mu)$. Performing the change of variables: $u=\frac{v}{\sqrt{2(\lambda(\mu)-\phi(\theta))_+}}$, we get
\[G(\mu)=2\sqrt{2}\int_0^{2\pi}\int_0^1\sqrt{(\lambda(\mu)-\phi(\theta))_+}j\circ(j')^{-1}\left(\frac{(\lambda(\mu)-\phi(\theta))_+}{\vert\mu\vert}(1-u^2)\right)\dd\theta\dd u\]
and
\vspace{-0.3cm}
\begin{equation}\label{M1}
M_1=2\sqrt{2}\int_0^{2\pi}\int_0^1\sqrt{(\lambda(\mu)-\phi(\theta))_+}(j')^{-1}\left(\frac{(\lambda(\mu)-\phi(\theta))_+}{\vert\mu\vert}(1-u^2)\right)\dd\theta\dd u.
\end{equation}
Then applying Jensen's inequality to the convex function $j$, we obtain
\[j\left(\frac{M_1}{\int_0^{2\pi}2\sqrt{2}\sqrt{(\lambda(\mu)-\phi(\theta))_+}\dd\theta}\right)_+\leq \frac{G(\mu)}{\int_0^{2\pi}2\sqrt{2}\sqrt{(\lambda(\mu)-\phi(\theta))_+}\dd \theta}.\]
Hence 
\vspace{-0.2cm}
\begin{equation}\label{lambda(mu)}
G(\mu)\geq \dfrac{j\left(\frac{M_1}{\alpha(\mu)}\right)}{\frac{M_1}{\alpha(\mu)}}M_1 \text{ with } \alpha(\mu)=2\sqrt{2}\int_0^{2\pi}\sqrt{(\lambda(\mu)-\phi(\theta))_+}\dd \theta.
\end{equation}
Using the dominated convergence theorem, we show that $\alpha(\mu)\underset{\mu\to 0^-}{\longrightarrow}0$. But $j$ satisfies (H2) therefore
\vspace{-0.4cm}
\[\frac{j\left(\frac{M_1}{\alpha(\mu)}\right)}{\frac{M_1}{\alpha(\mu)}}\underset{\mu\to 0^-}{\longrightarrow}+\infty \text{ and } \underset{\mu\to 0^-}{\lim}G(\mu)=+\infty.\] 
Let us continue with the computation of $\underset{\mu\to -\infty}{\lim}G(\mu)$. The hypothesis (H3) implies the following inequality:
\vspace{-0.3cm}
\begin{equation}\label{inegalite}
\frac{t(j')^{-1}(t)}{q}\leq j\circ (j')^{-1}(t)\leq \frac{t(j')^{-1}(t)}{p}.
\end{equation}
Thanks to (\ref{inegalite}), we can estimate
\vspace{-0.2cm}
\begin{equation}\label{truc2}
0\leq G(\mu)\leq\frac{M_1}{p}\frac{(\lambda(\mu)-\min\phi)_+}{\vert \mu\vert}
\end{equation}
Let us show that $\frac{M_1}{p}\frac{(\lambda(\mu)-\min\phi)_+}{\vert \mu\vert}\underset{\mu\to -\infty}{\longrightarrow}0$. Using the expression of $M_1$ given by (\ref{M1}), we get
\vspace{-0.2cm}
\[M_1\geq\sqrt{(\lambda(\mu)-\max\phi)_+}4\pi\sqrt{2}\int_0^1(j')^{-1}\left(\frac{(\lambda(\mu)-\max\phi)_+}{\vert \mu\vert}(1-u^2)\right)\dd u\geq 0.\]
For $\vert\mu\vert$ sufficiently large, we have $(\lambda(\mu)-\max\phi)_+>0$. Therefore, we have
\vspace{-0.1cm}
\[\frac{M_1}{\sqrt{(\lambda(\mu)-\max\phi)_+}}\frac{1}{4\pi\sqrt{2}}\geq\int_0^1(j')^{-1}\left(\frac{(\lambda(\mu)-\max\phi)_+}{\vert \mu\vert}(1-u^2)\right)\dd u\geq 0,\]
the term on the left side converges to 0. Hence using Fatou's lemma, we get\[\int_0^1\underset{\mu\to-\infty}{\liminf}(j')^{-1}\left(\frac{(\lambda(\mu)-\max\phi)_+}{\vert \mu\vert}(1-u^2)\right)\dd u=0.\] 
We deduce that $\frac{(\lambda(\mu)-\max\phi)_+}{\vert \mu\vert}\underset{\mu\to-\infty}{\longrightarrow}0$ and we conclude with (\ref{truc2}) that $\underset{\mu\to -\infty}{\lim}G(\mu)=0$. The proof is complete.
\end{proof}
As mentionned before the sequence $(F^{\phi_{f_n}})_n$ will be used to show the existence of minimizers of (\ref{I(M1,Mj)}) and the compactness of minimizing sequences. To do that, we need to link $\ch(f_n)$ and $\ch(F^{\phi_{f_n}})$. For this purpose, we introduce a second problem of minimization and we set $M_1,M_j>0$.
\begin{equation}\label{J}
\cj_0=\underset{\int_0^{2\pi}\phi=0}{\inf}\cj(\phi) \text{ where } \cj(\phi)=\iint\left(\frac{v^2}{2}+\phi(\theta)\right)F^{\phi}(\theta,v)\mathrm{d}\theta\mathrm{d}v+\frac{1}{2}\int_0^{2\pi}\phi'(\theta)^2\mathrm{d}\theta,
\end{equation}
where $F^{\phi}$ is defined by Lemma \ref{F^phi}.
\begin{lem}\label{inequality_relou}
We have the following inequalities:
\begin{enumerate}
\item For all $\phi \!\in\! H^2([0,2\pi])$ such that $\phi(0)=\phi(2\pi)$ and $\int_0^{2\pi}\!\phi\!=\!0$, we have $\mathcal{H}(F^{\phi})\!\leq \!\cj(\phi)$.
\item For all $f\in E_j$ with $\Vert f\Vert_{\dL^1}=M_1$ and $\Vert j(f)\Vert_{\dL^1}=M_j$, we have 
 \[\ci(M_1,M_j)\leq \mathcal{H}(F^{\phi_f})\leq \cj(\phi_f)\!\leq\! \mathcal{H}(f).\] 
Besides $\ci(M_1,M_j)=\cj_0$.
\end{enumerate}
\end{lem}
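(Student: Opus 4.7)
My plan is to mimic the proof of Lemma~\ref{inequality_relou1} for the one-constraint problem. There are two differences to accommodate here: the objective is $\ch$ alone rather than $\ch+\iint j$, and the term $\iint j(f)$ is fixed by the second constraint $\Vert j(f)\Vert_{\dL^1}=M_j$, so it must be reintroduced via convexity at the right moment rather than be absorbed into the functional itself.

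For item~(1), I fix $\phi\in H^2$ periodic with $\int_0^{2\pi}\phi\,\dd\theta=0$ and expand
\[
\cj(\phi)-\ch(F^{\phi})=\iint\phi\,F^{\phi}\dd\theta\dd v+\tfrac{1}{2}\Vert\phi'\Vert_{\dL^2}^2+\tfrac{1}{2}\Vert\phi'_{F^{\phi}}\Vert_{\dL^2}^2.
\]
The Poisson equation $\phi''_{F^{\phi}}=\rho_{F^{\phi}}-M_1/(2\pi)$, an integration by parts on $[0,2\pi]$, and the cancellation $\int_0^{2\pi}\phi\,\dd\theta=0$ turn the cross term into $-\int_0^{2\pi}\phi'\phi'_{F^{\phi}}\dd\theta$; completing the square then yields $\cj(\phi)-\ch(F^{\phi})=\tfrac{1}{2}\Vert\phi'-\phi'_{F^{\phi}}\Vert_{\dL^2}^2\geq 0$, which is the two-constraints analogue of (\ref{norme2}).

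For the right inequality in item~(2), I take $f\in E_j$ with $\Vert f\Vert_{\dL^1}=M_1$ and $\Vert j(f)\Vert_{\dL^1}=M_j$, and use the same Poisson identity $\int_0^{2\pi}(\phi_f')^2\dd\theta=-\iint\phi_f\,f\,\dd\theta\dd v$ together with the mass equality $\Vert F^{\phi_f}\Vert_{\dL^1}=\Vert f\Vert_{\dL^1}$ to obtain
\[
\ch(f)-\cj(\phi_f)=\iint\bigl(\tfrac{v^2}{2}+\phi_f-\lambda\bigr)(f-F^{\phi_f})\dd\theta\dd v,
\]
where $(\lambda,\mu)\in\br\times\br_-^*$ is the pair given by Lemma~\ref{F^phi} (the free insertion of $\lambda$ is allowed because the two densities share the same $\dL^1$ norm). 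On the support $\{F^{\phi_f}>0\}$ the defining equation yields $\frac{v^2}{2}+\phi_f-\lambda=\mu\,j'(F^{\phi_f})$, whereas off the support the same expression is nonnegative and $j'(F^{\phi_f})=j'(0)=0$. Splitting the integral accordingly reduces the right-hand side to
\[
\mu\iint j'(F^{\phi_f})(f-F^{\phi_f})\dd\theta\dd v+\iint_{\{F^{\phi_f}=0\}}\bigl(\tfrac{v^2}{2}+\phi_f-\lambda\bigr)f\,\dd\theta\dd v,
\]
whose second integral is manifestly nonnegative. For the first one, the convexity inequality $j(f)\geq j(F^{\phi_f})+j'(F^{\phi_f})(f-F^{\phi_f})$, integrated and combined with $\iint j(f)=\iint j(F^{\phi_f})=M_j$, gives $\iint j'(F^{\phi_f})(f-F^{\phi_f})\leq 0$; since $\mu<0$, the first integral is nonnegative as well, hence $\cj(\phi_f)\leq \ch(f)$.

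The leftmost inequality $\ci(M_1,M_j)\leq \ch(F^{\phi_f})$ is immediate since $F^{\phi_f}$ is admissible for (\ref{I(M1,Mj)}) by Lemma~\ref{F^phi}, and the equality $\ci(M_1,M_j)=\cj_0$ then follows by chaining the three inequalities of item~(2) and taking infima on each side. The step I expect to be most delicate is the combined use of the sign $\mu<0$ and the constraint $\Vert j(f)\Vert_{\dL^1}=M_j$: neither ingredient alone settles the sign of the Bregman-type remainder of $j$, but together they convert it into a nonnegative contribution, which is precisely what makes the two-constraints version of the identity work.
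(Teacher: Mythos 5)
Your proof is correct and follows essentially the same route as the paper: the completion of the square via the Poisson equation for item (1), and the convexity (Bregman-type) argument combined with $\mu<0$ and the constraint $\Vert j(f)\Vert_{\dL^1}=M_j$ for item (2). The only difference is that you explicitly split off the region $\{F^{\phi_f}=0\}$ and check its contribution is nonnegative, whereas the paper writes the substitution $\frac{v^2}{2}+\phi_f=\mu j'(F^{\phi_f})+\lambda$ as an identity throughout; your treatment is the slightly more careful version of the same step.
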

\begin{proof}
First, let us show item (1) of this lemma. Let $\phi \in H^2([0,2\pi])$ such that $\phi(0)=\phi(2\pi)$ and $\int_0^{2\pi}\phi=0$, we have
\begin{align*}
\cj(\phi)&=\ch(F^{\phi})-\frac{1}{2}\Vert \phi'_{F^{\phi}}\Vert_{\dL^2}^2+\frac{1}{2}\Vert \phi'\Vert_{\dL^2}^2+\iint (\phi(\theta)-\phi_{F^{\phi}}(\theta))F^{\phi}(\theta,v)\dd \theta\dd v\\
&=\ch(F^{\phi})-\frac{1}{2}\Vert \phi'_{F^{\phi}}\Vert_{\dL^2}^2+\frac{1}{2}\Vert \phi'\Vert_{\dL^2}^2+\int_0^{2\pi}(\phi-\phi_{F^{\phi}})(\phi''_{F^{\phi}}+\frac{\Vert F^{\phi}\Vert_{\dL^1}}{2\pi})\dd\theta,
\end{align*}
since $\phi_{F^{\phi}}$ satisfies the Poisson equation (\ref{Poisson}). Then, after integrating by parts and gathering the terms, we get
\vspace{-0.4cm}
\begin{equation}\label{norme2}
\cj(\phi)=\ch(F^{\phi})+\frac{1}{2}\Vert \phi'_{F^{\phi}}-\phi'\Vert_{\dL^2}^2.
\end{equation}
Hence $\cj(\phi)\geq \ch(F^{\phi})$.
Then, let us show the right inequality of item (2). Let $f\in E_j$ such that $\Vert f\Vert_{\dL^1}=M_1$ and $\Vert j(f)\Vert_{\dL^1}=M_j$. Using $\Vert F^{\phi}\Vert_{\dL^1}=M_1$ and $\Vert j(F^{\phi})\Vert_{\dL^1}=M_j$, using equality (\ref{Hamiltonien}), the Hamiltonian can be written in the form
\begin{align*}
\ch(f)&=\cj(\phi_f)+\iint\left(\frac{v^2}{2}+\phi_f(\theta)\right)(f(\theta,v)-F^{\phi_f}(\theta,v))\dd\theta\dd v\\
&=\cj(\phi_f)+\iint(\mu j'(F^{\phi_f})+\lambda)(f(\theta,v)-F^{\phi_f}(\theta,v))\dd\theta\dd v.
\end{align*}
\vspace{-0.3cm}
We get
\vspace{-0.1cm}
\begin{equation}\label{convexity}
\ch(f)=\cj(\phi_f)-\mu\iint (j(f)-j(F^{\phi_f})-j'(F^{\phi_f})(f-F^{\phi_f}))\dd\theta\dd v.
\end{equation}
The convexity of $j$ gives us the desired inequality. The other inequalities are straightforward.
\end{proof}

\subsection{Existence of ground states} This section is devoted to the proof of Theorem \ref{Existence2}.
\subsubsection{Properties of the infimum}
\begin{lem}\label{Inf_fini_compacite2}
The variational problem (\ref{I(M1,Mj)}) satisfies the following statements.
\begin{enumerate}
\item The infimum (\ref{I(M1,Mj)}) exists i.e. $\ci(M_1,M_j) >-\infty$ for $M_1, M_j>0$.
\item For any minimizing sequence $(f_n)_n$ of the variational problem (\ref{I(M1,Mj)}), we have the following properties:
\begin{enumerate}
\item The minimizing sequence $(f_n)_n$ is weakly compact in $\dL^1([0,2\pi]\times\br)$ i.e. there exists $\bar{f}\in\dL^1([0, 2\pi]\times \br)$ such that $f_n \underset{n\to+\infty}{\rightharpoonup}\bar{f}$ weakly in $\dL^1$.
\item We have $\Vert\phi_{f_n}-\phi_{\bar{f}}\Vert_{H^1}\underset{n\to+\infty}{\longrightarrow}0.$
\end{enumerate}
\end{enumerate}
\end{lem}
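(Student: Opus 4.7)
The proof closely parallels that of Lemma \ref{Inf_fini_compacite1}, and is in fact somewhat simpler: the bound $\Vert j(f_n)\Vert_{\dL^1}=M_j$ that had to be derived \emph{a posteriori} in the one-constraint setting (and whose proof in the $j(t)=t\ln t$ case required Jensen's inequality) is here imposed directly as part of the definition of $\ci(M_1,M_j)$. For item (1), the kinetic part of $\ch(f)$ is nonnegative and estimate (\ref{Norme_infini_phi'}), namely $\Vert\phi'_f\Vert_{\dL^\infty}\leq \Vert W'\Vert_{\dL^\infty}\Vert f\Vert_{\dL^1}$, gives, for any $f\in E_j$ with $\Vert f\Vert_{\dL^1}=M_1$,
$$\ch(f)\geq -\frac{1}{2}\int_0^{2\pi}\phi'_f(\theta)^2\,\dd\theta\geq -\pi\Vert W'\Vert_{\dL^\infty}^2 M_1^2,$$
which yields $\ci(M_1,M_j)>-\infty$.

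To prove item (2)(a), I plan to invoke the Dunford-Pettis theorem with the same three upper bounds used in the proof of Lemma \ref{Inf_fini_compacite1}. The $\dL^1$-norm bound is nothing but the mass constraint $\Vert f_n\Vert_{\dL^1}=M_1$; similarly the Casimir constraint $\Vert j(f_n)\Vert_{\dL^1}=M_j$ is built in, and combined with assumption (H2) (namely $j(t)/t\to+\infty$) it provides the equiintegrability of $(f_n)_n$ via the de la Vall\'ee-Poussin criterion. Since the $\theta$-domain is bounded, no tightness issue arises in $\theta$. For the second moment in $v$, the identity
$$\Vert v^2 f_n\Vert_{\dL^1}=2\ch(f_n)+\int_0^{2\pi}\phi'_{f_n}(\theta)^2\,\dd\theta\leq 2\ch(f_n)+2\pi\Vert W'\Vert_{\dL^\infty}^2 M_1^2,$$
combined with $\ch(f_n)\to \ci(M_1,M_j)$, yields a uniform bound. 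Hence, up to extraction, $f_n\rightharpoonup \bar f$ weakly in $\dL^1([0,2\pi]\times\br)$.

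For item (2)(b), I would reproduce verbatim the end of the proof of Lemma \ref{Inf_fini_compacite1}: writing
$$\phi_{f_n}(\theta)-\phi_{\bar f}(\theta)=\iint W(\theta-\tilde\theta)\,(f_n-\bar f)(\tilde\theta,v)\,\dd\tilde\theta\,\dd v$$
and the analogous identity with $W'$ in place of $W$, the continuity and boundedness of $W$ and $W'$ on $\br$ combined with the weak $\dL^1$-convergence of $f_n$ produce the pointwise convergence of $\phi_{f_n}(\theta)$ and $\phi'_{f_n}(\theta)$ for every $\theta\in[0,2\pi]$. Since moreover $|\phi_{f_n}-\phi_{\bar f}|$ and $|\phi'_{f_n}-\phi'_{\bar f}|$ are uniformly bounded by $2\Vert W\Vert_{\dL^\infty}M_1$ and $2\Vert W'\Vert_{\dL^\infty}M_1$ respectively, dominated convergence upgrades this to $H^1$ convergence. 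The one conceptually nontrivial input in this whole argument is the equiintegrability of the minimizing sequence, but as noted above, the two-constraint formulation removes this difficulty essentially for free, so no real obstacle remains.
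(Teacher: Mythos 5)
Your proposal is correct and follows exactly the route the paper intends: the paper simply states that the proof is "similar to the one of Lemma \ref{Inf_fini_compacite1}", and your argument is the natural instantiation of that — the lower bound via the kinetic term and (\ref{Norme_infini_phi'}), Dunford--Pettis from the three bounds (the two of which are now built into the constraints, as you rightly note), and dominated convergence for the $H^1$ convergence of the potentials. The only cosmetic remark is that $W'$ is bounded but not continuous at $0$; since weak $\dL^1$ convergence is tested against $\dL^\infty$ functions this changes nothing.
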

The proof of Lemma \ref{Inf_fini_compacite2} is similar to the one of Lemma \ref{Inf_fini_compacite1}.

\begin{lem}\label{lambda_et_mu}
Let $(f_n)_n$ be a minimizing sequence of the variational problem (\ref{I(M1,Mj)}) and let $\phi_n:=\phi_{f_n}$  be the associated potential. Using Lemma \ref{F^phi}, there exists a unique pair $(\lambda_n,\mu_n)\in\br\times\br_-^*$ such that $F^{\phi_n}(\theta,v)\!=\!(j')^{-1}\left(\frac{\frac{v^2}{2}+\phi_n(\theta)-\lambda_n}{\mu_n}\right)_+$ verifies $\Vert F^{\phi_n}\Vert_{\dL^1}=M_1$ and $\Vert j(F^{\phi_n})\Vert_{\dL^1}=M_j$. The sequences $(\lambda_n)_n$ and $(\mu_n)_n$ are bounded.
\end{lem}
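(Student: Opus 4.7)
The plan is to argue by contradiction, exploiting the coupling of $\lambda_n$ and $\mu_n$ forced by (H3) together with the change-of-variables representations of $M_1$ and $M_j$ introduced in the proof of Lemma \ref{F^phi}. From Lemma \ref{Inf_fini_compacite2} and the embedding $H^1([0,2\pi])\hookrightarrow \cc^0([0,2\pi])$, the sequence $(\phi_n)_n$ is uniformly bounded in $\dL^{\infty}$; in particular $\min\phi_n$ is uniformly bounded below, and since $\lambda_n > \min\phi_n$ by the construction of $F^{\phi_n}$, the sequence $(\lambda_n)_n$ is bounded below. On the support of $F^{\phi_n}$ one has $\mu_n j'(F^{\phi_n}) = \frac{v^2}{2}+\phi_n(\theta)-\lambda_n$; multiplying by $F^{\phi_n}$ and integrating yields
\[
\mu_n \iint j'(F^{\phi_n})F^{\phi_n}\dd\theta\dd v = \iint \tfrac{v^2}{2}F^{\phi_n}\dd\theta\dd v + \iint \phi_n F^{\phi_n}\dd\theta\dd v - \lambda_n M_1.
\]
Assumption (H3) gives $\iint j'(F^{\phi_n})F^{\phi_n} \in [pM_j,qM_j]$; Lemma \ref{inequality_relou} combined with $\|\phi'_{F^{\phi_n}}\|_{\dL^{\infty}} \leq \|W'\|_{\dL^{\infty}} M_1$ bounds $\iint \tfrac{v^2}{2}F^{\phi_n}$; and $\iint \phi_n F^{\phi_n}$ is controlled by $\|\phi_n\|_{\dL^{\infty}} M_1$. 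Hence the identity yields $\lambda_n = c_n|\mu_n| + O(1)$ with $c_n \in [pM_j/M_1,\, qM_j/M_1]$.

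Suppose now, toward a contradiction, that $|\mu_n|\to+\infty$ along a subsequence. The previous identity then forces $\lambda_n\to+\infty$ and (up to a further subsequence) $(\lambda_n-\phi_n(\theta))/|\mu_n|$ converges uniformly in $\theta$ to some $c_{\infty}>0$, while $(\lambda_n-\phi_n(\theta))_+\to+\infty$ uniformly. In the representation
\[
M_1 = 2\sqrt{2}\int_0^{2\pi}\sqrt{(\lambda_n-\phi_n(\theta))_+}\int_0^1 (j')^{-1}\!\left(\tfrac{(\lambda_n-\phi_n(\theta))_+(1-s^2)}{|\mu_n|}\right)\dd s\,\dd\theta
\]
obtained in the proof of Lemma \ref{F^phi} (cf.~(\ref{M1})), Fatou's lemma forces the right-hand side to diverge, contradicting $\|F^{\phi_n}\|_{\dL^1}=M_1$. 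Hence $(|\mu_n|)_n$ is bounded above, and the identity of the first paragraph then yields an upper bound on $(\lambda_n)_n$ as well.

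To rule out $\mu_n\to 0^-$, extract subsequences so that $\phi_n\to\bar\phi$ uniformly (from strong $H^1$ convergence plus Sobolev embedding) and $\lambda_n\to\bar\lambda\geq\min\bar\phi$. If $\bar\lambda>\min\bar\phi$, then on the positive-measure set $\{\bar\phi<\bar\lambda\}$ one has $(\lambda_n-\phi_n(\theta))_+/|\mu_n|\to+\infty$, so $(j')^{-1}(\cdots)\to+\infty$ for $s\in[0,1)$, and Fatou applied to the same representation of $M_1$ again yields a contradiction. If instead $\bar\lambda=\min\bar\phi$, then $(\lambda_n-\phi_n)_+\to 0$ uniformly, so $\alpha_n:=2\sqrt{2}\int_0^{2\pi}\sqrt{(\lambda_n-\phi_n(\theta))_+}\dd\theta\to 0$; the Jensen-type lower bound from the proof of Lemma \ref{F^phi} then gives $M_j\geq \frac{j(M_1/\alpha_n)}{M_1/\alpha_n}M_1\to+\infty$ by (H2), a contradiction.

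The main subtlety is the joint behavior of $(\lambda_n,\mu_n)$: once they are pinned to the same scale via (H3), both the divergence scenario ($|\mu_n|\to\infty$) and the degeneration scenario ($\mu_n\to 0$) are ruled out by comparing the two constraints $\|F^{\phi_n}\|_{\dL^1}=M_1$ and $\|j(F^{\phi_n})\|_{\dL^1}=M_j$ against Fatou-type and Jensen-type lower bounds on the change-of-variables representations used in the proof of Lemma \ref{F^phi}.
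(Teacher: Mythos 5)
Your proof is correct, and it reaches the boundedness of $(\lambda_n)_n$ and $(\mu_n)_n$ by a genuinely different mechanism than the paper. The paper works directly with the explicit representations of the two constraints: it first assumes $\lambda_n\to+\infty$, uses the representation (\ref{M1}) of $M_1$ (with $\phi_n$ replaced by the uniform bound $C=2\pi\Vert W\Vert_{\dL^{\infty}}M_1$) together with a Fatou argument to deduce $(\lambda_n-C)_+/\vert\mu_n\vert\to 0$, and then contradicts $M_j>0$ via the (H3) estimate $M_j\le \frac{M_1}{p}\frac{(\lambda_n+C)_+}{\vert\mu_n\vert}$; boundedness of $\vert\mu_n\vert$ then follows from the elementary bound $\frac{M_1}{4\pi\sqrt2\tilde C}\le (j')^{-1}(\tilde C/\vert\mu_n\vert)$. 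You instead integrate the Euler--Lagrange relation $\mu_n j'(F^{\phi_n})=\frac{v^2}{2}+\phi_n-\lambda_n$ against $F^{\phi_n}$ and invoke (H3) to pin $\lambda_n=c_n\vert\mu_n\vert+O(1)$ with $c_n\in[pM_j/M_1,qM_j/M_1]$, after which a single Fatou blow-up in (\ref{M1}) rules out $\vert\mu_n\vert\to+\infty$ and the identity transfers the bound to $\lambda_n$. Your coupling identity is arguably cleaner, since it makes the two-sided proportionality between $\lambda_n$ and $\vert\mu_n\vert$ explicit; the price is having to check that $\iint \frac{v^2}{2}F^{\phi_n}$ and $\iint\phi_nF^{\phi_n}$ are bounded, which you do correctly via Lemma \ref{inequality_relou} and (\ref{Norme_infini_phi'}). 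Note finally that your third paragraph (excluding $\mu_n\to 0^-$) is not required by the statement, which only asserts boundedness; the non-degeneracy $\mu_0<0$ is established separately in the paper, in Step 1 of the proof of Theorem \ref{Existence2}, by essentially the dichotomy you describe.
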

\begin{proof}
Let us first prove that the sequence $(\lambda_n)_n$ is bounded. We argue by contradiction. Hence up to an extraction of a subsequence, $\lambda_n\underset{n\to+\infty}{\longrightarrow}+\infty$. According to the expression (\ref{phi}) of the potential $\phi_n$, we have $\Vert \phi_n\Vert_{\dL^{\infty}}\leq 2\pi\Vert W\Vert_{\dL^{\infty}}M_1:=C$. Using the expression of $M_1$ given by (\ref{M1}), we get
\vspace{-0.2cm}
\[M_1\geq\sqrt{(\lambda_n-C)_+}4\pi\sqrt{2}\int_0^1(j')^{-1}\left(\frac{(\lambda_n-C)_+}{\vert \mu_n\vert}(1-u^2)\right)\dd u\geq 0.\]
Then, we argue as at the end of the proof of Lemma \ref{F^phi} and we deduce that $\frac{(\lambda_n-C)_+}{\vert \mu_n\vert}\!\!\underset{n\to+\infty}{\longrightarrow}\!\!0$. With the hypothesis (H3) and $\Vert \phi_n\Vert_{\dL^{\infty}}\leq C$, we can estimate $M_j$ as follows:
\vspace{-0.2cm}
\[0\leq M_j\leq\frac{M_1}{p}\frac{(\lambda_n+C)_+}{\vert \mu_n\vert}.\]
The term of the right side converges to 0 then we get a contradiction. The sequence $(\lambda_n)_n$ is hence bounded.
Now, we shall prove that the sequence $(\mu_n)_n$ is bounded. Using the expression (\ref{M1}) of $M_1$ and the fact that $\lambda_n$ is bounded, we have
\[\frac{M_1}{4\pi\sqrt{2}\tilde{C}}\leq(j')^{-1}\left(\frac{\tilde{C}}{\vert\mu_n\vert}\right) \quad \text{ where }\tilde{C} \text{ is a constant}.\]
Therefore we obtain
\vspace{-0.4cm}
\[0\leq\vert \mu_n\vert\leq \frac{\tilde{C}}{j'\left(\frac{M_1}{4\pi\sqrt{2}\tilde{C}}\right)}\]
and we deduce that the sequence $(\mu_n)_n$ is bounded. This achieves the proof of this lemma.
\end{proof}
\subsubsection{Proof of Theorem \ref{Existence2}}\label{Proof2}
We are now ready to prove Theorem \ref{Existence2}.\\

\textbf{Step 1} Existence of a minimizer.\\

Let $M_1, M_j>0$. From Lemma \ref{Inf_fini_compacite2}, we know that $\ci(M_1,M_j)$ is finite. Let us show that there exists a function of $E_j$ which minimizes the variational problem (\ref{I(M1,Mj)}). Let $(f_n)_n\in E_j^{\bn}$ be a minimizing sequence of $\ci(M_1,M_j)$. Thus $\ch(f_n)\underset{n\to+\infty}{\longrightarrow}\ci(M_1,M_j)$, $\Vert f_n\Vert_{\dL^1}=M_1$ and $\Vert j(f_n)\Vert_{\dL^1}=M_j$. From item (2) of Lemma \ref{Inf_fini_compacite2}, there exists $\bar{f}\in \dL^1([0,2\pi]\times\br)$ such that $f_n\underset{n\to+\infty}{\rightharpoonup}\bar{f}$ weakly in $\dL^1$. In what follows, we will denote by $\phi_n$ the potential $\phi_{f_n}$ defined by (\ref{phi}). Thanks to the weak convergence in $\dL^1$, we only get that $\Vert\bar{f}\Vert_{\dL^1}=M_1$ and $\Vert j(\bar{f})\Vert_{\dL^1}\leq M_j$. The idea is to introduce a new sequence which is a minimizing sequence of (\ref{I(M1,Mj)}) and which has better compactness properties. For this purpose, we define
\vspace{-0.1cm}
\begin{equation}\label{F^phibis}
F^{\phi_n}(\theta,v)=(j')^{-1}\left(\frac{\frac{v^2}{2}+\phi_n(\theta)-\lambda_n}{\mu_n}\right)_+
\end{equation}
where $(\lambda_n,\mu_n)$ is the unique pair  of $\br\times\br_-^*$ such that $\Vert F^{\phi_n}\Vert_{\dL^1}=M_1$ and $\Vert j(F^{\phi_n})\Vert_{\dL^1}=M_j$. According to Lemma \ref{F^phi}, $F^{\phi_n}$ is well-defined and notice that the pair $(\lambda_n, \mu_n)$ depends on $\phi_n$ this is why we will denote by $\lambda_n=\lambda(\phi_n)$ and $\mu_n=\mu(\phi_n)$. Besides, using Lemma \ref{inequality_relou}, we see that $(F^{\phi_n})_n$ is a minimizing sequence of (\ref{I(M1,Mj)}). According to item (b) of Lemma \ref{Inf_fini_compacite2}, $\phi_n$ converges to $\bar{\phi}:=\phi_{\bar{f}}$ strongly in $\dL^2([0,2\pi]\times\br)$. Thus, up to an extraction of a subsequence, $\phi_n$ converges to $\bar{\phi}$ a.e. Let us prove that the sequences $(\lambda_n)_n$ and $(\mu_n)_n$ converge. Using Lemma \ref{lambda_et_mu}, we get that the sequences $(\lambda_n)_n$ and $(\mu_n)_n$ are bounded. Therefore, there exists $\lambda_0$ and $\mu_0$ such that, up to an extraction of a subsequence, $\lambda_n\underset{n\to+\infty}{\longrightarrow}\lambda_0$ and $\mu_n\underset{n\to+\infty}{\longrightarrow}\mu_0$.
Let us prove that $\mu_0<0$. Assume that $\mu_n\underset{n\to +\infty}{\longrightarrow} 0$. First assume that $\lambda_n \underset{n\to +\infty}{\longrightarrow}\lambda_0\neq \min\bar{\phi}$. From assumptions on $j$, this implies
\vspace{-0.2cm}
\[(j')^{-1}\left(\frac{\lambda_n -\frac{v^2}{2}-\phi_n(\theta)}{\vert \mu_n\vert}\right)_+\underset{n\to+\infty}{\longrightarrow}+\infty \text{ for almost all } (\theta,v)\in[0,2\pi]\times\br.\]
And using Fatou's lemma, we get a contradiction. Then assume that $\lambda_n\underset{n\to+\infty}{\longrightarrow}\min\bar{\phi}$, using inequality (\ref{lambda(mu)}), we get 
\vspace{-0.2cm}
\begin{equation}\label{Mj}
M_j \geq \frac{j\left(\frac{M_1}{\alpha_n}\right)}{\frac{M_1}{\alpha_n}} \quad \text{ with }\quad \alpha_n=2\sqrt{2}\int_0^{2\pi}\sqrt{(\lambda_n-\phi_n(\theta))_+}\dd\theta.
\end{equation}
Using the dominated convergence theorem, we show that $\alpha_n\underset{n\to+\infty}{\longrightarrow} 0$. But $j$ satisfies (H2) thus $\frac{j\left(\frac{M_1}{\alpha_n}\right)}{\frac{M_1}{\alpha_n}}\underset{n\to+\infty}{\longrightarrow}+\infty$ and we get a contradiction with (\ref{Mj}). Besides $\lambda_0\neq \min\bar{\phi}$ since otherwise $F^{\phi_n}$ converges to $0$ and we get a contradiction with $\Vert F^{\phi_n}\Vert_{\dL^1}=M_1$. Hence we have proved that $F^{\phi_n}$ converges to $(j')^{-1}\left(\frac{\frac{v^2}{2}+\bar{\phi}(\theta)-\lambda_0}{\mu_0}\right)_+$ a.e. Now let us show that $\lambda_0=\lambda(\bar{\phi})$ and $\mu_0=\mu(\bar{\phi})$ to get that $(j')^{-1}\left(\frac{\frac{v^2}{2}+\bar{\phi}(\theta)-\lambda_0}{\mu_0}\right)_+$ satisfies the two constraints. For this purpose, we first prove by the dominated convergence theorem, $\Vert \phi_n\Vert_{\dL^{\infty}}$ being bounded, that
\begin{align}\label{a}
\begin{cases}
&\Vert F^{\phi_n}\Vert_{\dL^1}\underset{n\to+\infty}{\longrightarrow}\int_0^{2\pi}\int_{\br}(j')^{-1}\left(\frac{\frac{v^2}{2}+\bar{\phi}(\theta)-\lambda_0}{\mu_0}\right)_+\dd\theta\dd v,\\
&\Vert j(F^{\phi_n})\Vert_{\dL^1}\underset{n\to+\infty}{\longrightarrow}\int_0^{2\pi}\int_{\br}j\circ(j')^{-1}\left(\frac{\frac{v^2}{2}+\bar{\phi}(\theta)-\lambda_0}{\mu_0}\right)_+\dd\theta\dd v.
\end{cases}
\end{align}
But $(\Vert F^{\phi_n}\Vert_{\dL^1},\Vert j(F^{\phi_n})\Vert)=(M_1,M_j)$ then
\[
M_1\!=\!\int_0^{2\pi}\!\!\!\int_{\br}(j')^{-1}\left(\frac{\frac{v^2}{2}+\bar{\phi}(\theta)-\lambda_0}{\mu_0}\right)_{\!\!+}\!\!\!\dd\theta\dd v,\ \ M_j\!=\!\int_0^{2\pi}\!\!\!\int_{\br}j\circ(j')^{-1}\left(\frac{\frac{v^2}{2}+\bar{\phi}(\theta)-\lambda_0}{\mu_0}\right)_{\!\!+}\!\!\!\dd\theta\dd v.
\]
According to Lemma \ref{F^phi}, the couple ($\lambda(\bar{\phi}),\mu(\bar{\phi}))$ is unique, so $\lambda_0=\lambda(\bar{\phi})$ and $\mu_0=\mu(\bar{\phi})$. Hence $F^{\phi_n}$ converges to $F^{\bar{\phi}}$ a.e. . But $\Vert F^{\bar{\phi}}\Vert_{\dL^1}=\Vert F^{\bar{\phi_n}}\Vert_{\dL^1}=M_1$ then according to Brezis-Lieb's lemma, $F^{\phi_n}\underset{n\to+\infty}{\longrightarrow}F^{\bar{\phi}}$ strongly in $\dL^1([0,2\pi]\times\br)$. We already know that $F^{\bar{\phi}}$ satisfies the two constraints, there remains to show that $\ch(F^{\bar{\phi}})=\ci(M_1,M_j)$. The strong convergence in $\dL^1([0,2\pi]\times\br)$ of $F^{\phi_n}$ to $F^{\bar{\phi}}$ implies that $\phi'_{F^{\phi_n}}\underset{n\to+\infty}{\longrightarrow}\phi'_{F^{\bar{\phi}}}$ strongly in $\dL^{2}$. Therefore using classical inequalities about the lower semicontinuity properties of convex nonnegative functions see \cite{Kavian} and the convergence in $\dL^2([0,2\pi])$ of $\phi'_{F^{\phi_n}}$, we get
\vspace{-0.1cm}
\[\mathcal{I}(M_1,M_j)\geq\iint \frac{v^2}{2}F^{\bar{\phi}}(\theta,v)\mathrm{d}\theta\mathrm{d}v-\frac{1}{2}\int_0^{2\pi}\phi'_{F^{\bar{\phi}}}(\theta)^2\mathrm{d}\theta.\]
Thus $\mathcal{I}(M_1,M_j)\geq \ch(F^{\bar{\phi}})$.  As $F^{\bar{\phi}}$ satisfies the two constraints and belongs to $E_j$, we have $\mathcal{I}(M_1,M_j)\leq \ch(F^{\bar{\phi}})$. Therefore we get the equality and we have shown the existence of a minimizer.\\

\textbf{Step 2}: The minimizer is a steady state of (\ref{HMF}).\\

To prove that the minimizer $F^{\bar{\phi}}$ is a stationary state of the system (\ref{HMF}), it is sufficient to show that $\bar{\phi}=\phi_{F^{\bar{\phi}}}$. First, $(F^{\phi_n})_n$ being a minimizing sequence of (\ref{I(M1,Mj)}), we have $\ch(F^{\phi_n})\underset{n\to+\infty}{\longrightarrow}\ci(M_1,M_j)$. Then, using Lemma \ref{inequality_relou}, we know that $\cj_0=\ci(M_1,M_j)$ and that $\ci(M_1,M_j)\leq \cj(\phi_n)\leq\ch(f_n)$. Hence $(\phi_n)_n$ is a minimizing sequence of $\cj_0$: we have $\cj(\phi_n)\underset{n\to+\infty}{\longrightarrow}\ci(M_1,M_j)=\cj_0$. Hence using the equality (\ref{norme2}), we get
\vspace{-0.1cm}
\[\Vert \phi_{F^{\phi_n}}'-\phi_n'\Vert_{\dL^2}^2\underset{n\to+\infty}{\longrightarrow}0.\] 
Passing to the limit $n\to+\infty$ and knowing that $\bar{\phi}$ has a zero average, we deduce that $\bar{\phi}=\phi_{F^{\bar{\phi}}}$ a.e.\\ 

\textbf{Step 3}: Euler-Lagrange equation for minimizers.\\

There remains to prove part (2) of Therorem \ref{Existence2}. We obtain Euler-Lagrange equation for the minimizer in the same way as in the proof of Theorem \ref{Existence_minimizers1} in Section \ref{Proof1}. Indeed, according to Lemma \ref{inequality_relou}, if $\bar{f}$ is a minimizer of $\ci(M_1,M_j)$, $\bar{\phi}:=\phi_{\bar{f}}$ is a minimizer of $\cj_0$ and $\ch(\bar{f})=\cj(\bar{\phi})$. Using (\ref{convexity}), we get 
\[\iint (j(\bar{f})-j(F^{\bar{\phi}})-j'(F^{\bar{\phi}})(\bar{f}-F^{\bar{\phi}}))\dd\theta\dd v=0.\]
Then writting the Taylor's formula for $j$ and using $j''>0$, we can deduce as in Section \ref{Proof1} that $\bar{f}=F^{\bar{\phi}}$.\\

\textbf{Step 4}: Regularity of the potential $\phi_{f}$.\\

First, we will show that $\phi_f\in \cc^1([0,2\pi])$. Thanks to the Sobolev embedding 
\vspace{-0.1cm}
\[W^{2,3}([0,2\pi])\hookrightarrow \cc^{1,\frac{2}{3}}([0,2\pi]),\] 
it is sufficient to show that $\phi_f\in W^{2,3}([0,2\pi])$. We know that $f\in \dL^1([0,2\pi]\times\br)$, then with expression (\ref{phi}), we get $\phi_f\in\dL^{\infty}([0,2\pi])\subset L^3([0,2\pi])$. In the same way, $\phi_f'\in\dL^3([0,2\pi])$. Besides $\phi_f$ satisfies (\ref{Poisson}), then let us show that $\rho_f\in\dL^3([0,2\pi])$. According to the previous step, $f$ is compactly supported and since $\phi_f\in\dL^{\infty}$, we get $f\in\dL^{\infty}([0,2\pi]\times\br)$. We also have $v^2f\in\dL^1([0,2\pi]\times\br)$. Therefore with a classical argument, we show $\rho_f\in\dL^3([0,2\pi])$ and we get $\phi_f\in \cc^1([0,2\pi])$. Then, according to its expression (\ref{phi}), $\rho_f$ is continuous. Hence $\phi_f''\in \cc^0([0,2\pi])$ and $\phi_f'\in W^{1,3}([0,2\pi])\cap\cc^0([0,2\pi])$, then we can write for $x$, $y\in[0,2\pi]$
\vspace{-0.2cm}
\begin{equation}\label{phi_c2}
\phi_f'(y)-\phi_f'(x)=\int_x^y\phi_f''(t)\dd t.
\end{equation}
We deduce from (\ref{phi_c2}) that $\phi_f'\in\cc^1([0,2\pi])$ then $\phi_f\in\cc^2([0,2\pi])$.

\subsection{Orbital stability of the ground states}
To prove the orbital stability result stated in Theorem \ref{Orbital_stability2}, we first need to prove the local uniqueness of the minimizers under equimeasurability condition.
\subsubsection{Local uniqueness of the minimizers under equimeasurability condition}\label{Uniqueness2}

In this section, we prove Lemma \ref{Unicite2}. To this purpose, we first need to prove some preliminary lemmas.
\begin{lem}\label{mu_isole}
Let $f_1$, $f_2$ be two equimeasurable steady states of (\ref{HMF}) which minimizes (\ref{I(M1,Mj)}), they can be written in the form (\ref{forme}) with $(\lambda_1, \mu_1), (\lambda_2,\mu_2)\in \br\times\br_-^*$, we have for all $e\geq 0$
\begin{equation}\label{egalite_utile}
\vert\mu_1\vert^{\frac{1}{2}}\!\!\int_0^{2\pi}\!\!\!(\psi_1(\theta)-e)_+^{\frac{1}{2}}\dd\theta=\vert\mu_2\vert^{\frac{1}{2}}\!\!\int_0^{2\pi}\!\!\!(\psi_2(\theta)-e)_+^{\frac{1}{2}}\dd\theta \quad { where } \quad \psi_i=\frac{\phi_{f_i}-\lambda_i}{\mu_i},\, i=1,2.
\end{equation}
 Besides, if $f_1$ and $f_2$ are inhomogeneous then there exist $p_1=p_1(\phi_{f_1})\in\bn^*$ and $p_2=p_2(\phi_{f_2})\in \bn^*$ such that
\vspace{-0.1cm}
\begin{equation}\label{egalite_mu}
\frac{p_1\vert\mu_1\vert^{\frac{3}{4}}}{\sqrt{\left\vert a(e_0)-\frac{1}{\vert\mu_1\vert^{\frac{1}{2}}}c_0\right\vert}}=\frac{p_2\vert\mu_2\vert^{\frac{3}{4}}}{\sqrt{\left\vert a(e_0)-\frac{1}{\vert\mu_2\vert^{\frac{1}{2}}}c_0\right\vert}},
\end{equation}
where 
\vspace{-0.1cm}
\[
\begin{cases}
& a(e_0)=\int_{\br}(j')^{-1}\left(e_0-\frac{v^2}{2}\right)_+\quad \text{ with } e_0=max\left(\frac{\phi_{f_i}-\lambda_i}{\mu_i}\right), \quad i=1,2;\\
&c_0=\frac{M_1}{2\pi}.
\end{cases}\]
\end{lem}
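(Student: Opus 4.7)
My plan is to derive both identities directly from the explicit form \eqref{forme}: first by computing the distribution functions $\mu_{f_i}$ exactly, then by extracting the limit behavior of \eqref{egalite_utile} near the common maximum of the functions $\psi_i:=(\phi_{f_i}-\lambda_i)/\mu_i$.

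Using $\mu_i<0$, the form \eqref{forme} becomes $f_i(\theta,v)=(j')^{-1}\!\bigl(\psi_i(\theta)-\tfrac{v^2}{2\vert\mu_i\vert}\bigr)_+$. By (H1)--(H2), $j'$ is a continuous strictly increasing bijection from $\br_+$ onto $\br_+$ with $j'(0)=0$, so for $s>0$ the superlevel set $\{f_i>s\}$ coincides with $\{(\theta,v):v^2<2\vert\mu_i\vert(\psi_i(\theta)-j'(s))_+\}$, giving
\[
\mu_{f_i}(s)=2\sqrt{2\vert\mu_i\vert}\int_0^{2\pi}\bigl(\psi_i(\theta)-j'(s)\bigr)_+^{\frac{1}{2}}\dd\theta.
\]
The equality $\mu_{f_1}=\mu_{f_2}$ on $\br_+$, together with the substitution $e=j'(s)$ (which bijects $\br_+$ onto itself), yields exactly \eqref{egalite_utile}.

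For \eqref{egalite_mu}, I first observe that equimeasurability gives $\Vert f_1\Vert_{\dL^\infty}=\Vert f_2\Vert_{\dL^\infty}$; since $f_i$ reaches its supremum at $v=0$, $\theta=\arg\max\psi_i$, the injectivity of $(j')^{-1}$ forces $\max\psi_1=\max\psi_2=:e_0$. By Theorem \ref{Existence2}, $\phi_{f_i}\in\cc^2$, so $\psi_i$ is $\cc^2$, and the change of variables $u=v/\sqrt{\vert\mu_i\vert}$ in $\rho_{f_i}$ gives $\rho_{f_i}(\theta)=\sqrt{\vert\mu_i\vert}\,a(\psi_i(\theta))$. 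Injecting this into the Poisson equation \eqref{Poisson} and dividing by $\mu_i=-\vert\mu_i\vert$ produces the autonomous ODE
\[
\psi_i''(\theta)=\frac{c_0}{\vert\mu_i\vert}-\frac{a(\psi_i(\theta))}{\sqrt{\vert\mu_i\vert}}.
\]
In the inhomogeneous case $\psi_i$ is non-constant and $2\pi$-periodic, so by Cauchy--Lipschitz its minimal period must divide $2\pi$, giving $p_i\in\bn^*$ such that $\psi_i$ possesses exactly $p_i$ maxima in $[0,2\pi[$, all attaining the common value $e_0$.

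To extract \eqref{egalite_mu}, I analyse \eqref{egalite_utile} as $e\to e_0^-$. At each maximum $\theta^*_i$, $\psi_i'(\theta^*_i)=0$ and the ODE gives $\psi_i''(\theta^*_i)=-\beta_i$ with $\beta_i:=(a(e_0)-c_0/\sqrt{\vert\mu_i\vert})/\sqrt{\vert\mu_i\vert}$, which is strictly positive in the non-constant case. A second-order Taylor expansion together with the elementary identity $\int_{-\alpha}^{\alpha}(A-bx^2)_+^{\frac{1}{2}}\dd x=\pi A/(2\sqrt{b})$ yields
\[
\int_0^{2\pi}(\psi_i(\theta)-e)_+^{\frac{1}{2}}\dd\theta=\frac{\pi p_i(e_0-e)}{\sqrt{2\beta_i}}+o(e_0-e)\qquad\text{as }e\to e_0^-.
\]
Substituting into \eqref{egalite_utile}, dividing by $\pi(e_0-e)/\sqrt{2}$, letting $e\to e_0^-$ and simplifying $\sqrt{\vert\mu_i\vert/\beta_i}=\vert\mu_i\vert^{3/4}/\sqrt{a(e_0)-c_0/\sqrt{\vert\mu_i\vert}}$ delivers exactly \eqref{egalite_mu}. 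The main technical obstacle is to make this concentration argument fully rigorous: one must localize around each of the $p_i$ maxima using the $\cc^2$ control on $\psi_i$ supplied by the ODE, quantify the Taylor remainder after the rescaling $x=(\theta-\theta^*_i)\sqrt{\beta_i/(2(e_0-e))}$ so that it contributes only $o(e_0-e)$, and verify that the region where $\psi_i<e_0-\delta$ (for a suitably chosen $\delta$ independent of $e$) contributes a strictly smaller order.
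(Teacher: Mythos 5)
Your argument is correct and follows the paper's strategy in its essentials: \eqref{egalite_utile} is obtained exactly as in the paper by computing the distribution functions $\mu_{f_i}$ explicitly and substituting $e=j'(s)$, and \eqref{egalite_mu} is then read off from the $e\to e_0^-$ asymptotics of \eqref{egalite_utile}, using the same value $\psi_i''(\xi)=-\vert\mu_i\vert^{-1/2}\bigl(a(e_0)-c_0\vert\mu_i\vert^{-1/2}\bigr)$ at the maxima. Two sub-steps differ. First, where you use Cauchy--Lipschitz for the autonomous ODE $\psi_i''=c_0/\vert\mu_i\vert-a(\psi_i)/\sqrt{\vert\mu_i\vert}$ both to count the level-$e_0$ maxima (they form a coset of $T\bz$ with $T$ dividing $2\pi$) and to get $\beta_i>0$ (otherwise the constant solution $e_0$ would coincide with $\psi_i$), the paper proves finiteness by a Rolle-plus-limit-point argument (Lemma \ref{nb_fini_max}) and nondegeneracy by observing that $\psi_i''=0$ at a maximum would force $\rho_{f_i}\equiv M_1/2\pi$, hence $\phi_{f_i}=0$. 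Your route is arguably cleaner, but it rests on $a$ being locally Lipschitz, which you should justify (the paper asserts the analogous property of $\mathcal{G}$ in the one-constraint uniqueness proof without detail, so you are at least on equal footing). Second, the asymptotic expansion you state but only sketch is precisely the paper's Lemma \ref{DLcas2}, which is proved via the layer-cake identity $\int_0^{2\pi}(\psi-e)_+^{1/2}\dd\theta=\tfrac12\int_0^{e_0-e}s^{-1/2}\vert\{e+s\leq\psi\leq e_0\}\vert\dd s$, a rescaling around each $\xi_i$, and dominated convergence; your leading constant agrees with it, since $\int_0^1 s^{-1/2}(1-s)^{1/2}\dd s=\pi/2$. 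That expansion needs to be written out in full for the proof to be complete, but the localization-and-remainder plan you describe is exactly what the paper's proof carries out.
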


\begin{lem}\label{DLcas2}
Let $\psi\in\cc^2([0,2\pi])$ such that there exists a finite number $p$ of values $\xi\in[0,2\pi]$ satisfying $\psi(\xi)=\max(\psi):=e_0$. We will denote them by $\xi_i$ for $i\in \{1,..,p\}$. Besides we assume that for all $i\in \{1,..,p\}$, we have $\psi''(\xi_i)\neq 0$ thus we have
\[\int_0^{2\pi} (\psi(\theta)-e)_+^{\frac{1}{2}}\dd\theta=\varepsilon\sum_{i=1}^p\frac{\sqrt{2}}{\sqrt{\vert \psi''(\xi_i)\vert }}\int_0^1s^{-\frac{1}{2}}(1-s)^{\frac{1}{2}}\dd s+ o(\varepsilon) \quad \text{ with } \varepsilon=e_0-e.\]
\end{lem}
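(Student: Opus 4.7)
The strategy is to localize the integral near the maxima $\xi_1, \ldots, \xi_p$, perform a Taylor expansion, rescale with $\varepsilon = e_0 - e$, and apply a dominated convergence argument.

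\textbf{Step 1: Localization.} Since $\psi$ is continuous on the compact set $[0,2\pi]$ and attains its maximum $e_0$ only at the finitely many points $\xi_1, \ldots, \xi_p$, there exists $r_0 > 0$ such that the intervals $I_i := (\xi_i - r_0, \xi_i + r_0)$ are pairwise disjoint (taken modulo $2\pi$ since we are on the torus) and
\[
\max_{\theta \in [0,2\pi]\setminus \bigcup_i I_i} \psi(\theta) < e_0.
\]
Denoting this latter max by $e_0 - \delta$ for some $\delta > 0$, for every $\varepsilon < \delta$ we obtain $(\psi(\theta) - e)_+ = 0$ outside $\bigcup_i I_i$. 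Thus
\[
\int_0^{2\pi} (\psi(\theta) - e)_+^{1/2}\,\dd\theta = \sum_{i=1}^p \int_{I_i} (\psi(\theta) - e)_+^{1/2}\,\dd\theta.
\]

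\textbf{Step 2: Taylor expansion and rescaling.} Each $\xi_i$ is an interior maximum, so $\psi'(\xi_i) = 0$, and by assumption $\psi''(\xi_i) < 0$. Writing the second-order Taylor expansion with Peano remainder,
\[
\psi(\theta) - e = \varepsilon + \frac{\psi''(\xi_i)}{2}(\theta - \xi_i)^2 + r_i(\theta), \qquad r_i(\theta) = o((\theta-\xi_i)^2) \ \text{as } \theta \to \xi_i.
\]
I perform the change of variables $\theta - \xi_i = u\sqrt{2\varepsilon/|\psi''(\xi_i)|}$, so that $\dd\theta = \sqrt{2\varepsilon/|\psi''(\xi_i)|}\,\dd u$ and
\[
\frac{\psi(\theta) - e}{\varepsilon} = 1 - u^2 + \frac{r_i(\theta)}{\varepsilon}.
\]
For each fixed $u$, as $\varepsilon \to 0^+$ we have $\theta - \xi_i \to 0$, and since $r_i(\theta)/(\theta-\xi_i)^2 \to 0$, one gets $r_i(\theta)/\varepsilon \to 0$. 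Hence pointwise,
\[
\Bigl(\tfrac{\psi(\theta)-e}{\varepsilon}\Bigr)_+^{1/2} \longrightarrow (1-u^2)_+^{1/2}.
\]

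\textbf{Step 3: Domination and passage to the limit.} The range of $u$ is $|u| \leq r_0\sqrt{|\psi''(\xi_i)|/(2\varepsilon)}$, which tends to $+\infty$ with $1/\varepsilon$; outside a bounded set in $u$ the integrand is $0$ for $\varepsilon$ small because $\psi < e_0$ implies the Taylor expansion eventually becomes negative. Concretely, one can choose $r_0$ small enough that $|r_i(\theta)| \leq \frac{|\psi''(\xi_i)|}{4}(\theta-\xi_i)^2$ on $I_i$, which gives the uniform bound
\[
\Bigl(\tfrac{\psi(\theta)-e}{\varepsilon}\Bigr)_+^{1/2} \leq \bigl(1 - \tfrac{1}{2}u^2\bigr)_+^{1/2} \mathbf{1}_{\{u^2 \leq 2\}},
\]
which is an integrable, $\varepsilon$-independent majorant. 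By the dominated convergence theorem,
\[
\int_{I_i}(\psi(\theta)-e)_+^{1/2}\,\dd\theta = \varepsilon \sqrt{\tfrac{2}{|\psi''(\xi_i)|}} \int_{-u_{\max}(\varepsilon)}^{u_{\max}(\varepsilon)} \Bigl(\tfrac{\psi(\theta)-e}{\varepsilon}\Bigr)_+^{1/2} \dd u = \varepsilon \sqrt{\tfrac{2}{|\psi''(\xi_i)|}} \int_{-1}^{1}(1-u^2)^{1/2}\,\dd u + o(\varepsilon).
\]

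\textbf{Step 4: Identification of the constant and summation.} The change of variables $s = u^2$ on $[0,1]$ gives $\int_{-1}^{1}(1-u^2)^{1/2}\,\dd u = \int_0^1 s^{-1/2}(1-s)^{1/2}\,\dd s$. Summing the contributions over $i = 1,\ldots,p$ yields the announced expansion. The main technical point is to justify the uniform bound on the Taylor remainder on the whole localization interval $I_i$ (so that one can really apply dominated convergence rather than a less quantitative argument); this uses only the $\cc^2$ regularity together with $\psi''(\xi_i) \ne 0$, and is where the hypothesis $\psi''(\xi_i) \neq 0$ is crucial, since otherwise the rescaling exponent $1/2$ would be wrong.
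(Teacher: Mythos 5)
Your proof is correct, and it takes a genuinely more direct route than the paper's. The paper first applies the layer--cake identity
\[
\int_0^{2\pi}(\psi(\theta)-e)_+^{1/2}\,\dd\theta=\frac{1}{2}\int_0^{e_0-e}s^{-1/2}\,\bigl\vert\{e+s\le\psi\le e_0\}\bigr\vert\,\dd s,
\]
rescales $s$ by $\varepsilon$, and then reduces the problem to the asymptotics of the measures $\vert E_\varepsilon^i\vert$ of the superlevel sets near each $\xi_i$, which it computes by Taylor's formula with integral remainder and dominated convergence applied to an indicator function. You instead rescale directly in the $\theta$-integral via $\theta-\xi_i=u\sqrt{2\varepsilon/\vert\psi''(\xi_i)\vert}$ and apply dominated convergence to the rescaled integrand itself, identifying the Beta-type constant at the end by $s=u^2$; this collapses the two-stage argument (Fubini, then set-measure asymptotics) into a single Laplace-type computation. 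Your version also has the merit of an explicit, quantitative majorant: the bound $\vert r_i(\theta)\vert\le\tfrac{1}{4}\vert\psi''(\xi_i)\vert(\theta-\xi_i)^2$ on a small enough $I_i$ (available from the integral remainder and continuity of $\psi''$) yields the $\varepsilon$-independent dominating function $(1-\tfrac{1}{2}u^2)_+^{1/2}$, whereas the paper's domination step for $\vert E_\varepsilon^i\vert/\sqrt{\varepsilon}$ is left somewhat implicit. Both arguments use the same ingredients ($\cc^2$ regularity, $\psi''(\xi_i)\neq0$ forcing $\psi''(\xi_i)<0$ at an interior maximum, localization away from the non-maximal set) and arrive at the same constant $\int_0^1 s^{-1/2}(1-s)^{1/2}\,\dd s$ per maximum, so the two proofs are interchangeable; yours is arguably the cleaner one to read.
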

We first show Lemma \ref{Unicite2} using Lemmas \ref{mu_isole} and \ref{DLcas2} then Lemmas \ref{mu_isole} and \ref{DLcas2} will be proved.
\begin{proof}[Proof of Lemma \ref{Unicite2}]
Let $f_0$ be a homogeneous steady state of (\ref{HMF}) and a minimizer of (\ref{I(M1,Mj)}). It can be written in the form (\ref{forme}) with $(\lambda_0, \mu_0)\in \br\times\br_-^*$. First, let $f$ be a homogeneous steady state of (\ref{HMF}) and a minimizer of (\ref{I(M1,Mj)}) equimeasurable to $f_0$. It can be written in the form (\ref{forme}) with $(\lambda, \mu)\in\br\times\br_-^*$. We can also write
\begin{align}\label{expression_f_psi}
\begin{cases}
f_0(\theta,v)=(j')^{-1}\left(\frac{-v^2}{2\vert\mu_0\vert}+\psi_0(\theta)\right)_+ \quad &\text{ with } \quad \psi_0(\theta)=\frac{\phi_{f_0}(\theta)-\lambda_0}{\vert\mu_0\vert},\\
f(\theta,v)=(j')^{-1}\left(\frac{-v^2}{2\vert\mu\vert}+\psi(\theta)\right)_+ \quad &\text{ with }\quad \psi(\theta)=\frac{\phi_f(\theta)-\lambda}{\vert\mu\vert}.
\end{cases}
\end{align} 
The homogeneity and equimeasurability of $f_0$ and $f$ implies $\frac{\lambda_0}{\vert \mu_0\vert}=\frac{\lambda}{\vert \mu\vert}$. Besides replacing in equality (\ref{egalite_utile}) of Lemma \ref{mu_isole}, we get $\mu_0=\mu$ and then $\lambda_0=\lambda$. Thus $f_0=f$. Then let $f$ be an inhomogeneous steady state (\ref{HMF}) and a minimizer of (\ref{I(M1,Mj)}) equimeasurable to $f_0$. The minimizer $f$ can be written in the form (\ref{expression_f_psi}). The equimeasurability of $f_0$ and $f$ implies $\max(\psi_0)=\max(\psi)$. We note this value $e_0$ and we notice that $\psi_0(\theta)=e_0$ for all $\theta\in[0,2\pi]$. Replacing in equality (\ref{egalite_utile}) of Lemma \ref{mu_isole}, we get
\[2\pi\vert \mu_1\vert^{\frac{1}{2}}(e_0-e)_+^{\frac{1}{2}}=\vert\mu_2\vert^{\frac{1}{2}}\int_0^{2\pi}(\psi_2(\theta)-e)_+^{\frac{1}{2}}\dd\theta.\]
To estimate the right term of this equality, we will apply Lemma \ref{DLcas2} and we get
\[2\pi\vert\mu_1\vert^{\frac{1}{2}}\sqrt{\varepsilon}=\left(\vert\mu_2\vert^{\frac{1}{2}}\sum_{j=1}^{p_2}\frac{\sqrt{2}}{\sqrt{\vert\psi_2''(\xi_j)\vert}}\int_0^1s^{-\frac{1}{2}}(1-s)^{\frac{1}{2}}\dd s\right)\varepsilon+o(\varepsilon).\]
This last equality show us that this case cannot occur. Thus $f_0$ is the only homogeneous steady states of (\ref{HMF}) and minimizer of (\ref{I(M1,Mj)}) under equimeasurability condition.\\

Let $f_0$ be an inhomogeneous steady state of (\ref{HMF}) and a minimizer of (\ref{I(M1,Mj)}), it can be written in the form (\ref{forme}) with $(\lambda_0, \mu_0)\in \br\times\br_-^*$. Let $f$ be an inhomogeneous steady state of (\ref{HMF}) and a minimizer of (\ref{I(M1,Mj)}) equimeasurable to $f_0$. It can be written in the form (\ref{forme}) with $(\lambda, \mu)\in\br\times\br_-^*$. Let assume that $\mu_0=\mu$ then we can write our two minimizers like that
\vspace{-0.2cm}
\[f_0(\theta,v)=G\left(\frac{v^2}{2}+\psi_0(\theta)\right), \quad f(\theta,v)=G\left(\frac{v^2}{2}+\psi(\theta)\right),\]
 with $G(t)=(j')^{-1}\left(\left(\frac{t}{\mu}\right)_+\right)$ and $\psi_i(\theta)-\lambda_i$. Arguing as the one constraint case, we get $f_0=f$ up to a translation shift in $\theta$.
Let assume that $\mu_0\neq \mu$ and let us show that $\mu_0$ is isolated. Since $f_0$ and $f$ are inhomogeneous, they verify (\ref{egalite_mu}) according to Lemma \ref{mu_isole}. Define for $x>0$, $F(x)=\frac{x^{\frac{3}{4}}}{\sqrt{\vert a(e_0)-x^{-\frac{1}{2}}c_0\vert}}$ and  introduce the set 
\[E=\underset{p\in\bn}{\cup}\{\mu \text{ s.t. } pF(\vert\mu\vert)=A_0\}.\] 
If $E$ is finite, the result is trivial. Otherwise $E$ is countable, it can be written in the form $E=(\mu_n)_n$ with $\mu_n$ injective and satisfying for all $n\in\bn$, there exists $p_n$ such that $p_nF(\vert\mu_n\vert)=A_0$. Let $\mu_1$ a limit point of the sequence $(\mu_n)_n$, it verifies $F(\vert \mu_1\vert)=0$. Indeed, the sequence $(p_n)_n$ cannot take an infinity of times the same value since in equality (\ref{egalite_mu}), for $p$ fixed, there are at the most 4 $\mu$. Therefore $p_n\underset{n\to+\infty}{\longrightarrow}+\infty$. Thus $\mu_1=0$. As $\mu_0<0$, it is isolated. Thus there exists $\delta_0>0$ such that for all $f\neq f_0$ inhomogeneous steady state of (\ref{HMF}) and minimizer of (\ref{I(M1,Mj)}), we have $\vert\vert \mu\vert-\vert\mu_0\vert\vert>\delta_0$.
\end{proof}
Now, let us prove Lemma \ref{DLcas2}.
\begin{proof}[Proof of Lemma \ref{DLcas2}]
Let $\psi\in\cc^2([0,2\pi])$ satisfying the assumptions noted above, we have
\vspace{-0.1cm}
\begin{align*}
\int_0^{2\pi} (\psi(\theta)-e)_+^{\frac{1}{2}}\dd\theta&=\frac{1}{2}\int_0^{2\pi}\int_0^{(\psi(\theta)-e)_+}s^{-\frac{1}{2}}\dd s\dd\theta=\frac{1}{2}\int_0^{e_0-e}s^{-\frac{1}{2}}\left\vert\left\{e+s\leq \psi\leq e_0\right\}\right\vert\dd s,\\
&=\frac{\varepsilon^{\frac{1}{2}}}{2}\int_0^1s^{-\frac{1}{2}}\left\vert\left\{e_0-\varepsilon(1-s)\leq \psi\leq e_0\right\}\right\vert\dd s,\\
\end{align*}
using Fubini's theorem, putting $\varepsilon=e_0-e$ and performing a change of variables $\tilde{s}=\frac{s}{\varepsilon}$.

\noindent We define $E_{\varepsilon}=\left\{\theta \in [0,2\pi], e_0-\varepsilon(1-s)\leq \psi\leq e_0\right\}$. We can write $[0,2\pi]=\cup_{i=1}^{p}E_i$ with
\begin{align*}
\begin{cases}
&E_1=[0,\frac{\xi_1+\xi_2}{2}]\\
&E_i=[\frac{\xi_{i-1}+\xi_{i}}{2},\frac{\xi_{i}+\xi_{i+1}}{2}] \text{ for } i\in\{2,..,p-1\}\\
&E_{p}=[\frac{\xi_{p-1}+\xi_p}{2},2\pi].\\
\end{cases}
\end{align*}
Thus $E_{\varepsilon}=\cup_{i=1}^{p}E_{\varepsilon}^i$ with $E_{\varepsilon}^i=\left\{\theta\in E_i, -\varepsilon(1-s)\leq \psi(\theta)-e_0\leq 0\right\}$ and we get
\[\int_0^{2\pi}(\psi(\theta)-e)_+^{\frac{1}{2}}\dd\theta=\sum_{i=1}^{p}\frac{\varepsilon^{\frac{1}{2}}}{2}\int_0^1s^{-\frac{1}{2}}\vert E_{\varepsilon}^i\vert\dd s.\]
The next step is to compute for $i\in \{1...p\}$ the limit of $\vert E_{\varepsilon}^i\vert$ when $\varepsilon$ goes to 0. Notice that there is a unique $\xi_i$ in each interval $E_i$ for $i\in \{1...p\}$, and use the Taylor formula for $\psi$, to get
\vspace{-0.3cm}
\[E_{\varepsilon}^i=\left\{\theta\in E_i, -\varepsilon(1-s)\leq (\theta-\xi_i)^2\int_0^1(1-u)\psi''(u(\theta-\xi_i)+\xi_i)\dd u\leq 0 \right\}.\]
Let $A(\theta,\xi)=\int_0^1(1-u)\psi''(u(\theta-\xi)+\xi)\dd u$, we can write
\[E_{\varepsilon}^i=\left\{\theta\in E_i,  \frac{\vert\theta-\xi_i\vert}{\sqrt{\varepsilon}}\sqrt{\vert A(\theta,\xi_i)\vert}\leq \sqrt{1-s}\right\}.\]
Then we have
\[\vert E_{\varepsilon}^i\vert=2\sqrt{\varepsilon}\left\vert\left\{\theta\in E_i,  \theta\sqrt{\vert B(\theta,\xi_i)\vert}\leq \sqrt{1-s}\}\right\}\right\vert \text{ where } B(\theta,\xi_i)\!=\!\!\!\int_0^1\!\!(1-u)\psi''(u\sqrt{\varepsilon}\theta+\xi_i)\dd u.\]
Recall that $\psi''(\xi_i)\neq 0$ hence by continuity of $\psi''$, we have $\psi''\neq 0$ on a neighborhood of $\xi_i$. Thus for $e$ close to $e_0$ i.e. for $\varepsilon$ sufficiently small, we have $B(\theta, \xi_i)\neq 0$. Thus we can write
\[\frac{\vert E_{\varepsilon}^i\vert}{\sqrt{\varepsilon}}=2\int_0^{2\pi}\mathds{1}_{\left\{0\leq \theta\leq \frac{\sqrt{1-s}}{\sqrt{\vert B(\theta,\xi_i)\vert}}\right\}}\dd\theta.\]
Applying the dominated convergence theorem, we get for $i\in \{1...p\}$
\[\underset{\varepsilon\to 0}{\lim}\frac{\vert E_{\varepsilon}^i\vert}{\sqrt{\varepsilon}}=2\frac{\sqrt{2(1-s)}}{\sqrt{\vert\psi''(\xi_i)\vert}}.\]
This ends the proof of Lemma \ref{DLcas2}.
\end{proof}
To prove Lemma \ref{mu_isole}, we need a last technical lemma.
\begin{lem}\label{nb_fini_max}
Let $f$ be an inhomogenenous minimizer of the variational problem (\ref{I(M1,Mj)}) given by (\ref{forme}) with $(\lambda,\mu)\in\br\times\br_-^*$. We denote by $e_0:=\max\psi$ where $\psi(\theta)=\frac{\phi_f(\theta)-\lambda}{\mu}$. Then there is only a finite number of values $\xi$ satisfying $\psi(\xi)=e_0$.
\end{lem}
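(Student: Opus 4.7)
The plan is to argue by contradiction: assume $A:=\{\xi\in[0,2\pi]:\psi(\xi)=e_0\}$ is infinite, and deduce from the autonomous second-order ODE satisfied by $\phi_f$ that $\phi_f$ must be constant, contradicting the inhomogeneity assumption. Since $\mu<0$, $\psi(\xi)=e_0$ is equivalent to $\phi_f(\xi)=\min\phi_f$, so $A$ coincides with the set of global minimizers of $\phi_f$. Since $\phi_f\in\cc^2(\bt)$ by Theorem \ref{Existence2}, $A$ is closed in the compact torus $\bt$, so if it is infinite it admits an accumulation point $\xi_0\in A$. Being a global minimum, $\phi_f'(\xi_0)=0$; and for a sequence $\xi_n\to\xi_0$ with $\xi_n\in A\setminus\{\xi_0\}$, the second-order Taylor expansion of $\phi_f(\xi_n)-\phi_f(\xi_0)=0$ at $\xi_0$ forces $\phi_f''(\xi_0)=0$.

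Substituting the Euler-Lagrange form (\ref{forme}) into the Poisson equation (\ref{Poisson}), $\phi_f$ satisfies the autonomous ODE
\[
\phi_f''(\theta)=F(\phi_f(\theta))-\frac{M_1}{2\pi},\qquad F(\phi):=\int_{\br}(j')^{-1}\!\left(\frac{(\lambda-\phi-v^2/2)_+}{|\mu|}\right)dv,
\]
with $F$ continuous on $\br$ and strictly decreasing on $(-\infty,\lambda)$. Evaluating the ODE at $\xi_0$ gives $F(\min\phi_f)=M_1/(2\pi)$. Multiplying by $\phi_f'$ and integrating yields the first integral
\[
\tfrac12(\phi_f'(\theta))^2=W(\phi_f(\theta))-W(\min\phi_f),
\]
where $W'=F-M_1/(2\pi)$ and the constant of integration is fixed using $\phi_f'(\xi_0)=0$. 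The left-hand side being nonnegative, $W\circ\phi_f\geq W(\min\phi_f)$ on all of $\bt$. On the other hand $W'(\min\phi_f)=0$, and the strict monotonicity of $F$ on $(-\infty,\lambda)$ makes $W$ strictly concave there (with $W'\equiv-M_1/(2\pi)<0$ on $[\lambda,+\infty)$), so $W$ attains a strict global maximum on the range of $\phi_f$ precisely at $\min\phi_f$. The two inequalities can coexist only if $\phi_f(\theta)\equiv\min\phi_f$; combined with $\int_0^{2\pi}\phi_f=0$ this gives $\phi_f\equiv0$, contradicting the inhomogeneity of $f$.

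The main obstacle is the verification that $W$ is strictly concave on the range of $\phi_f$ and that $\min\phi_f$ lies strictly inside the region where this strict concavity holds. This relies on the bound $\min\phi_f<\lambda$, which is guaranteed because $M_1=\|f\|_{\dL^1}>0$ forces $f$ (and hence the support of $f$) to be nontrivial, so $\lambda-\min\phi_f>0$; from there the monotonicity of $(j')^{-1}$ together with the fact that the truncation $(\cdot)_+$ is nontrivial on a neighborhood of $\min\phi_f$ upgrades to the required strict monotonicity of $F$, and thus to strict concavity of $W$ at its critical point.
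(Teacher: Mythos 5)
Your proof is correct, and its first half is essentially the paper's: both arguments locate, by contradiction, a point $\tilde\xi$ where $\psi$ attains its maximum with $\psi'(\tilde\xi)=\psi''(\tilde\xi)=0$ (the paper gets the vanishing second derivative by applying Rolle's theorem to $\psi'$ between consecutive maximizers and passing to the limit; you get it by a second-order Taylor expansion at an accumulation point of the set of maximizers — both are fine and both use the $\cc^2$ regularity from Theorem \ref{Existence2}). Where you genuinely diverge is in the concluding step. The paper observes that $\psi''(\tilde\xi)=0$ means $\rho_f(\tilde\xi)=\frac{M_1}{2\pi}$, that $\rho_f(\theta)$ is a nondecreasing function of $\psi(\theta)$ so that $\rho_f\leq\rho_f(\tilde\xi)=\frac{M_1}{2\pi}$ everywhere, and that $\int_0^{2\pi}\rho_f=M_1$ then forces $\rho_f\equiv\frac{M_1}{2\pi}$, hence $\phi_f''\equiv 0$ and $\phi_f\equiv 0$; this is shorter and uses nothing beyond monotonicity and the mean-value constraint. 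You instead exploit the autonomous structure of $\phi_f''=F(\phi_f)-\frac{M_1}{2\pi}$ via the first integral $\frac12(\phi_f')^2=W(\phi_f)-W(\min\phi_f)$ and the strict concavity of the effective potential near its critical point; this is a standard and robust phase-plane argument, and your verification that $\min\phi_f<\lambda$ (so that $F$ is genuinely strictly decreasing there) is the right point to worry about. The only caution is notational: the paper already uses $W$ for the interaction kernel in (\ref{phi}), so the effective potential should be given a different name to avoid a clash.
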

\begin{proof}
Let us argue by contradiction. Assume there is an infinite number of different values $\xi$ satisfying $\psi(\xi)=e_0$. We define a strictly increasing sequence $(\xi_n)_n$ such that for all $n$,  $\psi(\xi_n)=e_0$. In particular we have $\psi'(\xi_n)=0$. Then we apply Rolle's theorem on each interval $[\xi_n,\xi_{n+1}]$ and we build a new sequence $(\tilde{\xi}_n)_n$ such that $\psi''(\tilde{\xi}_n)=0$. We have $(\tilde{\xi}_n)_n\in[0,2\pi]^{\bn}$ thus there exists $\tilde{\xi}$ such that $\tilde{\xi}_n\underset{n\to+\infty}{\longrightarrow}\tilde{\xi}$ up to an extraction of a subsequence. With the continuity of $\psi''$ and Theorem \ref{Existence2}, we get $\psi''(\tilde{\xi})=0$. By construction, we have for all $n$, $\tilde{\xi}_{n-1}<\xi_n<\tilde{\xi}_n$. Thus up to an extraction of a subsequence $\xi_n\underset{n\to+\infty}{\longrightarrow}\tilde{\xi}$ and the limit satisfies $\psi'(\tilde{\xi})=0$ and $\psi(\tilde{\xi})=e_0$. Besides we know that
\vspace{-0.2cm}
\[\psi''=\frac{\phi''_f}{\mu}=\frac{\rho_f-\frac{M_1}{2\pi}}{\mu},\]
then $\rho_f(\tilde{\xi})=\frac{M_1}{2\pi}$. Using the expression of $\rho_f$, we get for all $\theta\in[0,2\pi]$, $\rho_f(\theta)\leq \rho_f(\tilde{\xi})$ and $\max(\rho_f)=\rho_f(\tilde{\xi})=\frac{M_1}{2\pi}$. Since $\int_{\br}\rho_f=M_1$, we deduce that for all $\theta\in[0,2\pi]$, $\rho_f(\theta)=\frac{M_1}{2\pi}$. Thus for all $\theta$, $\phi''_f(\theta)=0$. Since $\phi_f$ has a zera average and $\phi_f(0)=\phi_f(2\pi)$, we get $\phi_f=0$. Contradiction.
\end{proof}
We are now ready to prove Lemma \ref{mu_isole}.
\begin{proof} Let $f_1$ and $f_2$ be two steady states of (\ref{HMF}) and two minimizers of (\ref{I(M1,Mj)}) equimeasurable. They can be written in the form (\ref{forme}) and we can write
\vspace{-0.2cm}
\[f_1(\theta,v)=(j')^{-1}\left(\frac{v^2}{2\mu_1}+\psi_1(\theta)\right)_+, \qquad f_2(\theta,v)=(j')^{-1}\left(\frac{v^2}{2\mu_2}+\psi_2(\theta)\right)_+\]
where $\psi_i(\theta)=\frac{\phi_{f_i}(\theta)-\lambda_i}{\mu_i}$ for $i=1$ or $2$. Since $f_1$ and $f_2$ are equimeasurable, we know that for all $t\geq 0$
\[\left\vert\left\{(j')^{-1}\left(\frac{-v^2}{2\vert\mu_1\vert}+\psi_1(\theta)\right)_+>t\right\}\right\vert=\left\vert\left\{(j')^{-1}\left(\frac{-v^2}{2\vert\mu_2\vert}+\psi_2(\theta)\right)_+>t\right\}\right\vert.\]
We have for $i=1$ or $2$,
\vspace{-0.1cm}
\begin{align*}
\left\vert\left\{(j')^{-1}\left(\frac{-v^2}{2\vert\mu_i\vert}+\psi_i(\theta)\!\!\right)_{+}>t\right\}\right\vert1&=\left\vert\left\{\frac{v^2}{2}-\vert\mu_i\vert\psi_i(\theta)<-\vert\mu_i\vert j'(t)\right\}\right\vert\\
&=2\sqrt{2}\vert \mu_i\vert^{\frac{1}{2}}\int_0^{2\pi}(\psi_i(\theta)-j'(t))_+^{\frac{1}{2}}\dd\theta.
\end{align*}
Thus for all $e\geq 0$, we have equality (\ref{egalite_utile}). Then let assume that $\phi_{f_1}\neq 0$ and $\phi_{f_2}\neq 0$. According to the third point of Theorem \ref{Existence2}, $\psi_1$, $\psi_2\in\cc^2([0,2\pi])$. Besides according to Lemma \ref{nb_fini_max}, there exists for $i=1$ or $2$, $p_i=p_i(\phi_{f_i})$ such that $\psi_i$ has $p_i$ values $\xi$ satisfying $\psi_i(\xi)=e_0$. We note them $\{\xi_{i,1},..,\xi_{i,p_i}\}$. In order to apply Lemma \ref{DLcas2}, let us show that $\psi_i''(\xi_{i,j})\neq 0$ for $j\in\{1,..,p_i\}$ and $i=1$ or $2$. If $\psi_i''(\xi_{i,j})=0$, since $\xi_{i,j}$ is a maximum of $\psi$ too, we are in the same case as the end of the proof of Lemma \ref{nb_fini_max} and we get a contradiction. Hence we are allowed to use Lemma \ref{DLcas2} and get 
\[\vert\mu_1\vert^{\frac{1}{2}}\sum_{j=1}^{p_1}\frac{1}{\sqrt{\vert \psi_1''(\xi_{1,j})\vert}}=\vert\mu_2\vert^{\frac{1}{2}}\sum_{j=1}^{p_1}\frac{1}{\sqrt{\vert \psi_2''(\xi_{1,j})\vert}}.\]
Notice that we have for $i=1$ or $2$
\begin{align*}
\psi_i''(\theta)&=\phi_{f_i}''(\theta)=\frac{\rho_{f_i}(\theta)-\frac{M_1}{2\pi}}{\mu_i}=\frac{1}{\mu_i}\left(\int_{\br}(j')^{-1}\left(\frac{-v^2}{2\vert\mu_i\vert}+\psi_i(\theta)\right)_+\dd v-\frac{M_1}{2\pi}\right),\\
&=-\vert\mu_i\vert^{-\frac{1}{2}}\left(\int_{\br}(j')^{-1}\left(\frac{-v^2}{2}+\psi_i(\theta)\right)_+\dd v-\frac{1}{\vert \mu_i\vert^{\frac{1}{2}}}\frac{M_1}{2\pi}\right).
\end{align*}
 Thus we have
\vspace{-0.3cm}
\[\psi_i''(\xi_{i,j})=-\vert\mu_i\vert^{-\frac{1}{2}}\left(a(e_0)-\frac{1}{\vert \mu_i\vert}\frac{M_1}{2\pi}\right)\]
with $a(e_0)=\int_{\br}(j')^{-1}\left(e_0-\frac{v^2}{2}\right)_+\dd v$, and therefore equality (\ref{egalite_mu}) is proved.
\end{proof}

\subsubsection{Proof of Theorem \ref{Orbital_stability2}}\label{implication et compacite}
We will prove the orbital stability of steady states of (\ref{HMF}) which are minimizers of (\ref{I(M1,Mj)}) in two steps. First we will show that any minimizing sequence is compact.\\

\textbf{Step 1} Compactness of the minimizing sequences\\

Let $(f_n)_n$ be a minimizing sequence of $\mathcal{I}(M_1,M_j)$. Let us show that $(f_n)_n$ is compact in $E_j$ i.e. there exists $f_0 \in E_j$ such that $ f_n\xrightarrow{E_j}f_0$ up to an extraction of a subsequence. Using item (2) of Lemma \ref{Inf_fini_compacite2}, there exists $f_0\in\dL^1([0,2\pi]\times\br)$ such that $f_n \underset{n\to+\infty}{\rightharpoonup}f_0$ weakly in $\dL^1([0,2\pi]\times\br)$ and we denote by $\phi_0:=\phi_{f_0}$. In the same way as the proof of Theorem \ref{Existence2} in Section \ref{Proof2}, we introduce the function $F^{\phi_n}$ defined by (\ref{F^phibis}). According to Step 1 of the proof of Theorem \ref{Existence2} in Section \ref{Proof2}, it is a minimizing sequence of (\ref{I(M1,Mj)}), $F^{\phi_n}$ converges to $F^{\phi_0}$ strongly in $\dL^1([0,2\pi]\times\br)$ and $F^{\phi_0}$ is a minimizer of $\ci(M_1,M_j)$. Our goal is to prove that $f_0=F^{\phi_0}$ and $f_n\xrightarrow{E_j}f_0$.

In order to do that, let us start with the proof of the strong convergence in $\dL^1([0,2\pi]\times\br)$ of $f_n$ to $F^{\phi_0}$. First, we notice that $\Vert f_n\Vert_{\dL^1}=\Vert F^{\phi_0}\Vert_{\dL^1}=M_1$, then thanks to Brezis-Lieb's lemma, it is sufficient to show that $f_n$ converges to $F^{\phi_0}$ a.e. in order to get the strong convergence in $\dL^1([0,2\pi]\times\br)$. To this purpose, let us write
\vspace{-0.1cm}
\[f_n-F^{\phi_0}=f_n-F^{\phi_n}+F^{\phi_n}-F^{\phi_0}.\]
As the a.e.  convergence of $F^{\phi_n}$ to $F^{\phi_0}$ is already known, the next step is to show that $f_n-F^{\phi_n}$ converges to 0 a.e. For this purpose, we wil argue as in the proof of Theorem \ref{Orbital_stability1} in Section \ref{OS1}. We notice that we have
\begin{equation}\label{pp}
\iint (j(f_n)-j(F^{\phi_n})-j'(F^{\phi_n})(f_n-F^{\phi_n}))\dd\theta\dd v \underset{n\to+\infty}{\longrightarrow}0.
\end{equation}
Indeed, using equality (\ref{convexity}), we get
\vspace{-0.1cm}
\[\iint (j(f_n)-j(F^{\phi_n})-j'(F^{\phi_n})(f_n-F^{\phi_n}))\dd\theta\dd v =\frac{\cj(\phi_n)-\ch(f_n)}{\mu}.\]
 There remains to argue as in Step 2 of the proof of Theorem \ref{Existence2} in Section \ref{Proof2} to get the desired limit. Then writting the Taylor's formula for the function $j(f_n)$ and integrating over $[0,2\pi]\times\br$, we get
\vspace{-0.1cm}
\[\iint (f_n-F^{\phi_n})^2\!\int_0^1(1-u)j''(u(f_n-F^{\phi_n})+F^{\phi_n})\mathrm{d}u=\!\!\iint\! j(f_n)-\iint \!j(F^{\phi_n})-\iint \!(f_n-F^{\phi_n})j'(F^{\phi_n}).\]
Thus $\iint (f_n-F^{\phi_n})^2\int_0^1(1-u)j''(u(f_n-F^{\phi_n})+F^{\phi_n})\mathrm{d}u\underset{n\to+\infty}{\longrightarrow}0$. Arguing in the same way as the proof of Theorem \ref{Orbital_stability1} in Section \ref{OS1}., we get $f_n-F^{\phi_n}\underset{n\to+\infty}{\longrightarrow}0$ a.e. To recap, we have obtained that $\Vert f_n- F^{\phi_0}\Vert_{\dL^1}\underset{n\to+\infty}{\longrightarrow}0$. But $f_n\underset{n\to+\infty}{\longrightarrow}f_0$ weakly in $\dL^1([0,2\pi]\times\br)$ then by uniqueness of the limit, we have $F^{\phi_0}=f_0$. Therefore $\Vert f_n- f_0\Vert_{\dL^1}\underset{n\to+\infty}{\longrightarrow}0$. To show the convergence in $E_j$, there remains to show that
\vspace{-0.1cm}
\[\Vert v^2(f_n-f_0)\Vert_{\dL^1}\underset{n\to+\infty}{\longrightarrow}0,\quad \text{and} \quad
\Vert j(f_n)\Vert_{\dL^1}\underset{n\to+\infty}{\longrightarrow}\Vert j(f_0)\Vert_{\dL^1}.\]
The second limit clearly comes from the fact that $f_0=F^{\phi_0}$ satisfies the constraints. For the first limit, we write
\[\iint v^2(f_n(\theta,v)-f_0(\theta,v))\dd\theta\dd v = 2(\ch(f_n)-\ch(f_0))+\Vert \phi'_n\Vert_{\dL^2}^2-\Vert \phi'_0\Vert_{\dL^2}^2.\]
Then $\Vert v^2 f_n\Vert_{\dL^1}\underset{n\to+\infty}{\longrightarrow}\Vert v^2f_0\Vert_{\dL^1}$. Besides the strong convergence in $\dL^1([0,2\pi]\times\br)$ of $f_n$ to $f_0$ implies that $v^2f_n\underset{n\to+\infty}{\longrightarrow}v^2f_0$ a.e. up to an extraction of a subsequence. We conclude with Brezis-Lieb's lemma. Hence the minimizing sequence is compact in $E_j$.\\

\textbf{Step 2 } Proof of the orbital stability\\

Before starting the proof of Theorem \ref{Orbital_stability2}, notice the following fact. As mentioned in Section \ref{Proof2}, it is possible to obtain Euler-Lagrange equations for the minimizers in the same way as in the proof of Theorem \ref{Existence_minimizers1}. This method provides the expressions of $\lambda$ and $\mu$. In particular, we have
\vspace{-0.3cm}
\begin{equation}\label{expression_mu}
\mu=-\frac{\Vert v^2f \Vert_{\dL^1}}{C_f} \text{ with } C_f= \iint fj'(f)\dd\theta\dd v-M_j.
\end{equation}
If $f_1$ and $f_2$ are equimeasurable, then $C_{f_1}=C_{f_2}$. Hence, we can rewrite the first point of Lemma \ref{Unicite2} as follows.
\begin{lem}\label{nouveau}
Let $f_0$ be an inhomogeneous steady state of (\ref{HMF}) which is a minimizer of (\ref{I(M1,Mj)}). Let $(\lambda,\mu)\in\br\times\br_-^*$ be the Lagrange multipliers associated with $f_0$ according to (\ref{forme}). There exists $\delta_0>0$ such that for all $f\in E_j $ inhomogeneous steady state of (\ref{HMF}) which is minimizer of (\ref{I(M1,Mj)}) and which is equimeasurable to $f_0$ with $\mu_0\neq \mu$, where $\mu$ is the Lagrange constant associated with $f$ in the expression (\ref{forme}), we have
\vspace{-0.1cm}
\begin{equation}\label{eq_nouveau}
\left\vert \Vert v^2f_0\Vert_{\dL^1}-\Vert v^2f\Vert_{\dL^1}\right\vert>\delta_0.
\end{equation}
\end{lem}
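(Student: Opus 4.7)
The strategy is to convert the gap on $|\mu|$ coming from Lemma~\ref{Unicite2} into the required gap on the kinetic energy $\Vert v^2 f\Vert_{\dL^1}$, via the explicit representation (\ref{expression_mu}) of the multiplier $\mu$. First I would invoke Lemma~\ref{Unicite2} applied to $f_0$: it provides $\delta>0$ such that any inhomogeneous minimizer $f$ of (\ref{I(M1,Mj)}) equimeasurable to $f_0$, written in the form (\ref{forme}) with Lagrange multipliers $(\lambda',\mu')\in\br\times\br_-^*$ and $\mu'\neq\mu_0$, satisfies $\bigl||\mu_0|-|\mu'|\bigr|>\delta$. Next I would observe that the denominator $C_f=\iint f\,j'(f)\,\dd\theta\dd v-M_j$ appearing in (\ref{expression_mu}) is rearrangement-invariant: both $\iint j(f)\,\dd\theta\dd v=M_j$ and $\iint f\,j'(f)\,\dd\theta\dd v$ are integrals of fixed scalar functions of $f$ (namely $j$ and $t\mapsto tj'(t)$) and therefore depend only on the distribution function $\mu_f$. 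Hence $C_f=C_{f_0}$ whenever $f$ is equimeasurable to $f_0$.

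Combining these two facts with (\ref{expression_mu}) applied to $f_0$ and to $f$, one gets
\[|\mu_0|=\frac{\Vert v^2 f_0\Vert_{\dL^1}}{C_{f_0}}\qquad\text{and}\qquad|\mu'|=\frac{\Vert v^2 f\Vert_{\dL^1}}{C_{f_0}},\]
so that
\[\bigl|\Vert v^2 f_0\Vert_{\dL^1}-\Vert v^2 f\Vert_{\dL^1}\bigr|=C_{f_0}\bigl||\mu_0|-|\mu'|\bigr|>C_{f_0}\,\delta,\]
and it suffices to take $\delta_0:=C_{f_0}\delta$ to obtain (\ref{eq_nouveau}). The only point to verify is the strict positivity of $C_{f_0}$; but since $\mu_0\in\br_-^*$ and $\Vert v^2 f_0\Vert_{\dL^1}>0$ (the mass constraint $M_1>0$ together with the continuity of $f_0$ established in Theorem~\ref{Existence2} prevents $f_0$ from being concentrated on the null set $\{v=0\}$), the identity $\mu_0=-\Vert v^2 f_0\Vert_{\dL^1}/C_{f_0}$ itself forces $C_{f_0}>0$. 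No genuine obstacle is expected here: all the substantive work has already been carried out in Lemma~\ref{Unicite2}, and the present lemma is essentially an algebraic reformulation built on (\ref{expression_mu}) together with the rearrangement-invariance of $C_f$.
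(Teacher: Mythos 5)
Your proposal is correct and follows exactly the paper's route: the paper presents Lemma \ref{nouveau} as a direct rewriting of the second point of Lemma \ref{Unicite2}, using the identity (\ref{expression_mu}) together with the observation that $C_f$ is invariant under equimeasurability, which is precisely your argument. Your additional check that $C_{f_0}>0$ (forced by $\mu_0<0$ and $\Vert v^2 f_0\Vert_{\dL^1}>0$) is a welcome detail the paper leaves implicit.
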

This characterization will be used in the proof of the orbital stability of steady states.\\

Before proving the orbital stability of minimizers, we need to prove a preliminary lemma. 
\begin{lem}\label{lem_pre}Let $f_0$ be an inhomogeneous steady state of (\ref{HMF}) which minimizes (\ref{I(M1,Mj)}). We denote by $\delta_0$ the constant associated with $f_0$ as defined in Lemma \ref{Unicite2}. We have: $\forall\varepsilon>0$, $\exists \eta>0$ such that $\forall f_{init}\in E_j$
\vspace{-0.1cm}
\begin{align*}
\Vert (1+v^2)&(f_{init}-f_0)\Vert_{\dL^1}\leq\eta \text{ and } \left\vert \iint j(f_{init})-\iint j(f_0)\right\vert\leq\eta \\
&\Rightarrow \left[\forall t>0, \left[\left\vert\Vert v^2f(t)\Vert_{\dL^1}-\Vert v^2f_0\Vert_{\dL^1}\right\vert\leq \frac{\delta_0}{2}\Rightarrow\left\vert\Vert v^2f(t)\Vert_{\dL^1}-\Vert v^2f_0\Vert_{\dL^1}\right\vert\leq \varepsilon\right]\right],
\end{align*}
where $f(t)$ is a solution to (\ref{HMF}) with initial data $f_{init}$.
\end{lem}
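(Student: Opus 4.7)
I would argue by contradiction: suppose the stated implication fails for some $\varepsilon>0$. Then there exist a sequence $(f_{init}^n)_n\subset E_j$ with $\Vert(1+v^2)(f_{init}^n-f_0)\Vert_{\dL^1}\to 0$ and $|\iint j(f_{init}^n)-\iint j(f_0)|\to 0$, together with times $t_n>0$ such that the corresponding solutions $f^n$ of \eqref{HMF} satisfy
\[
\varepsilon \,<\, \bigl|\Vert v^2 f^n(t_n)\Vert_{\dL^1}-\Vert v^2 f_0\Vert_{\dL^1}\bigr|\,\leq\, \frac{\delta_0}{2}.
\]
Setting $g_n:=f^n(t_n)$, mass conservation, Casimir conservation, and the energy inequality respectively give $\Vert g_n\Vert_{\dL^1}\to M_1$, $\Vert j(g_n)\Vert_{\dL^1}\to M_j$, and $\mathcal{H}(g_n)\leq \mathcal{H}(f_{init}^n)\to \mathcal{I}(M_1,M_j)$. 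A two-parameter renormalization $\tilde g_n(\theta,v):=\alpha_n g_n(\theta,v/\beta_n)$, with $(\alpha_n,\beta_n)\to(1,1)$ chosen to enforce both constraints exactly (a two-variable version of the single-parameter rescaling used in Step~1 of the proof of Theorem~\ref{Orbital_stability1}), produces a true minimizing sequence of $\mathcal{I}(M_1,M_j)$ with $\Vert(1+v^2)(\tilde g_n-g_n)\Vert_{\dL^1}\to 0$.

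Step~1 of Section~\ref{implication et compacite} (compactness of minimizing sequences) then yields, up to a subsequence, $\tilde g_n\xrightarrow{E_j}\tilde g$ for some minimizer $\tilde g$ of $\mathcal{I}(M_1,M_j)$. Hence also $g_n\xrightarrow{E_j}\tilde g$, and in particular $\Vert v^2 g_n\Vert_{\dL^1}\to\Vert v^2\tilde g\Vert_{\dL^1}$. Since the HMF flow preserves all Casimirs, $g_n$ is equimeasurable to $f_{init}^n$ for every $n$; passing to the limit in the distribution functions as in Section~\ref{OS1} (strong $\dL^1$ convergence combined with right-continuity of $s\mapsto\mu_f(s)$), one obtains $\mu_{\tilde g}=\mu_{f_0}$.

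I would then invoke the uniqueness dichotomy of Lemma~\ref{Unicite2}. Since $f_0$ is inhomogeneous, the first item of that lemma prevents $\tilde g$ from being homogeneous (if it were, then by that item it would be the unique minimizer in its own equimeasurability class, forcing $\tilde g=f_0$ and contradicting $\phi_{f_0}\not\equiv0$). Thus $\tilde g$ is an inhomogeneous minimizer equimeasurable to $f_0$, and Lemma~\ref{nouveau} leaves only two alternatives: either the Lagrange constants coincide, in which case the second item of Lemma~\ref{Unicite2} gives $\tilde g=f_0$ up to a translation in $\theta$, so $\Vert v^2\tilde g\Vert_{\dL^1}=\Vert v^2 f_0\Vert_{\dL^1}$, contradicting the lower bound $\varepsilon$ after passing to the limit; or $|\Vert v^2\tilde g\Vert_{\dL^1}-\Vert v^2 f_0\Vert_{\dL^1}|>\delta_0$, contradicting the upper bound $\delta_0/2$. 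Either way, we reach a contradiction.

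The delicate step is the two-parameter renormalization restoring both constraints exactly. One must check that the map $(\alpha,\beta)\mapsto(\Vert\alpha g_n(\cdot,\cdot/\beta)\Vert_{\dL^1},\Vert j(\alpha g_n(\cdot,\cdot/\beta))\Vert_{\dL^1})$ is a local $\cc^1$-diffeomorphism near $(1,1)$ when $g_n$ is close to $f_0$, which relies on the strict convexity of $j$ and on assumption (H3); non-degeneracy of its Jacobian at $(1,1)$ then provides $(\alpha_n,\beta_n)\to(1,1)$. Once this technical point is settled, the argument reduces to the standard compactness--plus--uniqueness scheme already used in Section~\ref{OS1}.
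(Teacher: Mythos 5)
Your proposal is correct and follows essentially the same route as the paper: contradiction, a two-parameter rescaling $\gamma_n g_n(\theta,\tfrac{\gamma_n}{\lambda_n}v)$ restoring both constraints (the paper solves for $\lambda_n$ explicitly and for $\gamma_n$ via a scalar equation using (H3bis), as in Lemma A.1 of \cite{Manev}, rather than an implicit-function argument), then compactness of minimizing sequences, equimeasurability of the limit with $f_0$, and the dichotomy of Lemmas \ref{Unicite2} and \ref{nouveau} to conclude. No gaps.
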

With this lemma, we are able to prove Theorem \ref{Orbital_stability2}. We will prove Lemma \ref{lem_pre} after the proof of Theorem \ref{Orbital_stability2}.
\begin{proof}[Proof of Theorem \ref{Orbital_stability2}]
Let us argue by contradiction, let $f_0$ be an inhomogeneous minimizer of (\ref{I(M1,Mj)}). Assume that $f_0$ is orbitally unstable. Then there exist $\varepsilon_0>0$, a sequence $(f_{init}^n)_n\in E_j^{\mathbb{N}}$ and a sequence $(t_n)_n\in (\mathbb{R}^{+}_*)^{\bn}$ such that $f_{init}^n\xrightarrow{E_j} f_0$ and for all $n$, for all $\theta_0\in [0,2\pi]$ 
\vspace{-0.1cm}
\begin{align}\label{bb}
\begin{cases}
&\Vert f^n(t_n,\theta+\theta_0,v)-f_0(\theta,v)\Vert_{\dL^1}>\varepsilon_0,\\
&\text{or }\Vert v^2(f^n(t_n,\theta+\theta_0,v)-f_0(\theta,v))\Vert_{\dL^1}>\varepsilon_0,
\end{cases}
\end{align}
where $f^n(t_n,\theta,v)$ is a solution to (\ref{HMF}) with initial data $f_{init}^n$. Let $g_n(\theta,v)=f^n(t_n,\theta,v)$, we have $\ch(g_n)\leq \ch(f_{init}^n)$ from the conservation property of the flow (\ref{HMF}). Introduce $\bar{g_n}(\theta,v)=\gamma_n g_n\left(\theta,\frac{\gamma_n}{\lambda_n}v\right)$ where $(\gamma_n,\lambda_n)$ is the unique pair such that $\Vert\bar{g_n}\Vert_{\dL^1}=M_1$ and $\Vert j(\bar{g_n})\Vert_{\dL^1}=M_j$. Besides $\gamma_n$ and $\lambda_n$ satisfy
\vspace{-0.1cm}
\begin{equation}\label{lambda et gamma}
\lambda_n=\frac{M_1}{\Vert g_n\Vert_{\dL^1}} \text{ and } \gamma_n \text{ is such that } \frac{\Vert j(\gamma_n g_n)\Vert_{\dL^1}}{\gamma_n}=\frac{M_j\Vert g_n\Vert_{\dL^1}}{M_1}.
\end{equation}
The existence and uniqueness of such $(\gamma_n,\lambda_n)$ can be proved exactly the same way as Lemma A.1 in \cite{Manev}. As $\bar{g_n}$ satisfies the two constraints of the minimization problem (\ref{I(M1,Mj)}), we have $\ch(f_0)\leq\ch(\bar{g_n})$. Besides we have
\vspace{-0.1cm}
\begin{equation}\label{gnbar}
\ch(f_0)\leq\ch(\bar{g_n})\leq \lambda_n^2\left(\left(\frac{\lambda_n}{\gamma_n^2}-1\right)\Vert \frac{v^2}{2}g_n\Vert_{\dL^1}+\ch(f_{init}^n)\right).
\end{equation}
Notice that
\vspace{-0.2cm}
\begin{equation}\label{gn}
\left\Vert g_n\right\Vert_{\dL^1}=\Vert f_{init}^n\Vert_{\dL^1}\underset{n\to +\infty}{\longrightarrow}M_1 \text{ since } \Vert f_{init}^n-f_0\Vert_{\dL^1}\underset{n\to +\infty}{\longrightarrow}0 \text{ and } \Vert f_0\Vert_{\dL^1}=M_1.
\end{equation}
Hence the sequence $(g_n)_n$ is bounded in $\dL^1$. We also have
\vspace{-0.1cm}
\[\left\Vert\frac{v^2}{2}g_n\right\Vert_{\dL^1}=\ch(g_n)+\frac{1}{2}\int_0^{2\pi}\phi_{g_n}'^2(\theta)\dd\theta\leq C+\pi\Vert W'\Vert_{\dL^{\infty}}^2\Vert g_n\Vert_{\dL^1}^2 \text{ where } C \text{ is a constant},\]
and therefore the sequence $(\Vert \frac{v^2}{2}g_n\Vert_{\dL^1})_n$ is bounded too. Let us then show that $\lambda_n$ and $\gamma_n$ converge to 1. With (\ref{lambda et gamma}), we get $\lambda_n\underset{n\to+\infty}{\longrightarrow}1$. To deal with the case of $\gamma_n$, we will use the fact that the hypothesis (H3) is equivalent to the hypothesis (H3bis)
\vspace{-0.1cm}
\[\text{ (H3bis) }:\, b^pj(t)\leq j(bt)\leq b^q j(t), \, \forall b\geq 1, \, t\geq 0 \text{ and } b^qj(t)\leq j(bt)\leq b^pj(t), \, \forall b\leq 1, \, t\geq 0.\]
Therefore using (H3bis), we get
\vspace{-0.2cm}
\[\min(C_n^{\frac{1}{p-1}},C_n^{\frac{1}{q-1}})\leq \gamma_n\leq \max(C_n^{\frac{1}{p-1}},C_n^{\frac{1}{q-1}}), \text{ where } C_n=\left(\frac{M_j}{M_1}\frac{\Vert g_n\Vert_{\dL^1}}{\Vert j(g_n)\Vert_{\dL^1}}\right)^{\frac{1}{p-1}}.\]
But $\Vert j(g_n)\Vert_{\dL^1}=\Vert j(f_{init}^n)\Vert_{\dL^1}\underset{n\to+\infty}{\longrightarrow}\Vert j(f_0)\Vert_{\dL^1}$ and therefore $C_n\underset{n\to+\infty}{\longrightarrow}1$. Thus $\gamma_n\underset{n\to+\infty}{\longrightarrow}1$. We deduce with (\ref{gnbar}) that $\underset{n\to+\infty}{\lim}\ch(\bar{g_n})=\ch(f_0)$ and thus $(\bar{g_n})_n$ is a minimizing sequence of (\ref{I(M1,Mj)}). According to the previous step, this sequence is compact, hence, up to an extraction of a subsequence, there exists $\bar{g}\in E_j$ such that $\bar{g_n}\underset{n\to+\infty}{\longrightarrow}\bar{g}$ in $E_j$. It is easy to show with Brezis-Lieb's lemma that $g_n\underset{n\to+\infty}{\longrightarrow}\bar{g}$ in $E_j$ up to an extraction of a subsequence. This implies that
\vspace{-0.1cm}
\begin{equation}\label{b}
\Vert g_n-\bar{g}\Vert_{\dL^1}\underset{n\to+\infty}{\longrightarrow}0, \,\Vert v^2(g_n-\bar{g})\Vert_{\dL^1}\underset{n\to+\infty}{\longrightarrow}0 \text{ and } \vert\iint j(g_n)-\iint j(\bar{g})\vert\underset{n\to+\infty}{\longrightarrow}0.
\end{equation}
Then we deduce of this convergence that $\ch(g_n)\underset{n\to+\infty}{\longrightarrow}\ch(\bar{g})$, but $\ch(g_n)\underset{n\to+\infty}{\longrightarrow}\ci(M_1,M_j)$ and $\ci(M_1,M_j)=\ch(\bar{g})$. Besides $\bar{g}$ satisfies the two constraints therefore $\bar{g}$ is a minimizer of (\ref{I(M1,Mj)}). Furthermore in the same way as the proof of Theorem \ref{Orbital_stability1} in Section \ref{OS1}, we prove that $\bar{g}$ and $f_0$ are equimeasurable. In summary, $f_0$ and $\bar{g}$ are equimeasurable minimizers of $\ci(M_1,M_j)$. According to Lemma \ref{Unicite2}, $g$ cannot be a homogeneous steady state. Thus $g$ is an inhomogeneous minimizer and has the form (\ref{forme}) with $(\lambda_{\bar{g}}, \mu_{\bar{g}})\in\br\times\br_-^*$. The inhomogeneous minimizer $f_0$ also has the form (\ref{forme}) with $(\lambda_0,\mu_0)\in\br\times\br_-^*$. If \mbox{$\mu_{\bar{g}}= \mu_0$},  according to Lemma \ref{Unicite2}, $f_0=\bar{g}$ up to a translation in $\theta$. Then (\ref{b}) contradicts (\ref{bb}) and we have proved that $f_0$ is an orbitally stable steady state. Otherwise, $\mu_{\bar{g}}\neq\mu_0$ and according to Lemma \ref{nouveau}, there exists $\delta_0$ such that (\ref{eq_nouveau}) holds. Now, let us show that $\vert\Vert v^2\bar{g}\Vert_{\dL^1}-\Vert v^2f_0\Vert_{\dL^1}\vert\leq\delta_0$. In order to do that, let us prove that for all $n$,
\vspace{-0.2cm}
\begin{equation}\label{gn}
\vert\Vert v^2g_n\Vert_{\dL^1}-\Vert v^2 f_0\Vert_{\dL^1}\vert\leq\frac{\delta_0}{2}.
\end{equation}
We will show that $\forall t\geq 0$, $\vert\Vert v^2f^n(t)\Vert_{\dL^1}-\Vert v^2f_0\Vert_{\dL^1}\vert\leq\frac{\delta_0}{2}$. Let us argue by contradiction and assume there exists $t\geq 0$ such that $\vert\Vert v^2f^n(t)\Vert_{\dL^1}-\Vert v^2f_0\Vert_{\dL^1}\vert>\frac{\delta_0}{2}$. As $\Vert (1+v^2)(f_{init}^n-f_0)\Vert_{\dL^1}\underset{n\to+\infty}{\longrightarrow}0$, we can assume $\forall n$, $\Vert (1+v^2)(f_{init}^n-f_0)\Vert_{\dL^1}\leq\frac{\delta_0}{4}$. This implies $\forall n$, $\vert\Vert v^2f_{init}^n\Vert_{\dL^1}-\Vert v^2f_0\Vert_{\dL^1}\vert\leq\frac{\delta_0}{4}$. Thus we have 
\vspace{-0.2cm}
\[\vert\Vert v^2f^n(0)\Vert_{\dL^1}-\Vert v^2f_0\Vert_{\dL^1}\vert\leq\frac{\delta_0}{4} \text{ and } \exists t>0 \text{ s.t. } \vert\Vert v^2f^n(t)\Vert_{\dL^1}-\Vert v^2f_0\Vert_{\dL^1}\vert>\frac{\delta_0}{2}.\]
By continuity of the map $t\mapsto\Vert v^2f^n(t)\Vert_{\dL^1}$, there exists $t_0>$ such that 
\[\vert\Vert v^2f^n(t_0)\Vert_{\dL^1}-\Vert v^2f_0\Vert_{\dL^1}\vert=\frac{\delta_0}{3}<\frac{\delta_0}{2},\]
therefore according to Lemma \ref{lem_pre}, for all $\varepsilon>0$, we have \mbox{$\vert\Vert v^2f^n(t_0)\Vert_{\dL^1}-\Vert v^2f_0\Vert_{\dL^1}\vert\leq\varepsilon$.} For instance with $\varepsilon=\frac{\delta_0}{5}$, we get a contradiction. Hence: $\forall t\geq 0$, \mbox{$\vert\Vert v^2f^n(t)\Vert_{\dL^1}-\Vert v^2f_0\Vert_{\dL^1}\vert\leq\frac{\delta_0}{2}$} and we deduce (\ref{gn}). Recall that we have $\Vert v^2(g_n-\bar{g})\Vert\underset{n\to+\infty}{\longrightarrow}0$, hence with (\ref{gn}), we deduce that $\vert\Vert v^2f_0\Vert_{\dL^1}-\Vert v^2\bar{g}\Vert_{\dL^1}\vert\leq\delta_0.$
We get a contradiction with (\ref{eq_nouveau}) and $\mu_0=\mu_{\bar{g}}$ then $f_0=\bar{g}$ up to a translation shift in $\theta$. Then (\ref{b}) contradicts (\ref{bb}) and we have proved that $f_0$ is an orbitally stable steady state.\\

If $f_0$ is a homogeneous minimizer of (\ref{I(M1,Mj)}). We follow the same reasoning by contradiction and we build an other equimeasurable minimizer $\bar{g}$. Two cases arise: first, $\bar{g}$ is inhomogeneous and in fact, this case cannot occur according to the third point of Lemma \ref{mu_isole}. Hence we get a contradiction. Secondly, $\bar{g}$ is homogeneous and we have $f_0=\bar{g}$ according to the first point of Lemma \ref{Unicite2}. We get the same kind of contradiction as in the case of $f_0$ inhomogeneous. Hence, we have proved that $f_0$ is an orbitally stable steady state.
\end{proof}
To end this section, let us prove the preliminary lemma \ref{lem_pre}.
\begin{proof}[Proof of Lemma \ref{lem_pre}]
Let us argue contradiction. Then there exist $\varepsilon_0>0$, a sequence $(f_{init}^n)_n\in E_j^{\mathbb{N}}$ and a sequence $(t_n)_n\in \mathbb{R}^{+}_*$ such that $f_{init}^n \xrightarrow{E_j} f_0$ and for all $n$, 
\vspace{-0.1cm}
\begin{equation}\label{bb_pre}
\vert\Vert v^2f^n(t_n)\Vert_{\dL^1}-\Vert v^2 f_0\Vert_{\dL^1}\vert\leq \frac{\delta_0}{2} \quad \text{ and } \quad\vert\Vert v^2f^n(t_n)\Vert_{\dL^1}-\Vert v^2f_0\Vert_{\dL^1}\vert>\varepsilon_0,
\end{equation}
where $f^n(t_n)$ is a solution to (\ref{HMF}) with initial data $f_{init}^n$. Let $g_n(\theta,v)=f^n(t_n,\theta,v)$, exactly like in the proof of Theorem \ref{Orbital_stability2}, we introduce $\bar{g_n}(\theta,v)=\gamma_n g_n\left(\theta,\frac{\gamma_n}{\lambda_n}v\right)$ where $(\gamma_n,\lambda_n)$ is the unique pair such that $\Vert\bar{g_n}\Vert_{\dL^1}=M_1$ and $\Vert j(\bar{g_n})\Vert_{\dL^1}=M_j$. In the same way as the proof of Theorem \ref{Orbital_stability2} in Section \ref{implication et compacite}, we prove that $\bar{g}$ is a minimizer of (\ref{I(M1,Mj)}) and as in the proof of Theorem \ref{Orbital_stability1} in Section \ref{OS1}, we show that $\bar{g}$ and $f_0$ are equimeasurable. Using the first inequality of (\ref{bb_pre}) and the convergence of $\Vert v^2g_n\Vert_{\dL^1}$ to $\Vert v^2 \bar{g}\Vert_{\dL^1}$, we get
\begin{equation}\label{proche}
\vert\Vert v^2 f_0\Vert_{\dL^1}-\Vert v^2\bar{g}\Vert_{\dL^1}\vert\leq \delta_0
\end{equation}
Therefore according to Lemma \ref{Unicite2}, we deduce that $f_0=\bar{g}$ up to a translation in $\theta$ and we get a contradiction with the second inequality of (\ref{bb_pre}) and the convergence in $E_j$ of $g_n$ to $\bar{g}$.
\end{proof}
%
%\subsubsection{Expression of the minimizers}
%In the proof of the compactness of minimizing sequences, we have shown that any minimizing sequence $(f_n)_n$ of (\ref{I(M1,Mj)}) converges in $E_j$ to a minimizer $f_0$ which satisfies $f_0=F^{\phi_0}$. Hence, in particular, all minimizers of (\ref{I(M1,Mj)}) have the following expression:
%\vspace{-0.3cm}
%\[f_0(\theta,v)=(j')^{-1}\left(\frac{\frac{v^2}{2}+\phi_{f_0}(\theta)-\bar{\lambda}}{\bar{\mu}}\right)_+ \quad \text{ with } (\bar{\lambda},\bar{\mu})\in\br\times\br_-^*.\]

\section{Problem with an infinite number of constraints}\label{infini_contraintes}
\subsection{Generalized rearrangement with respect to the microscopic energy}

In the same way as in the two-constraints problem, we introduce a new function denoted by $f^{*\phi}$. The sequence $(f^{*\phi_n})_n$ has better compactness properties than the sequence $(f_n)_n$. We get the compactness of $(f_n)_n$ via the compactness of $(f^{*\phi_n})_n$ thanks to monotonicity properties of $\ch$ with respect to the transformation $f^{*\phi}$ which will be detailed in Lemma \ref{Inequalities1-2-3}. To define this new function, we use the generalization of symmetric rearrangement with respect to the microscopic energy $e=\frac{v^2}{2}+\phi(\theta)$ introduced in \cite{Stability_HMFcos}. For more generalized results, see also \cite{Lemou_seul}. We first recall the usual notion of rearrangement which is adapted here to functions defined on the domain $\bt\times\br$. For more details  on this subject see \cite{Kavian} and \cite{Loss}. For any nonnegative function $f\in\dL^1(\bt\times\br)$, we define its distribution function with (\ref{mu_f}). Let $f^{\#}$ be the pseudo-inverse of the function $\mu_f$ defined by (\ref{mu_f}):
\begin{equation}
f^{\#}(s)=\inf\{t\geq 0, \mu_f(t)\leq s\}=\sup \{t\geq 0, \mu_f(t)>s\},\, \text{ for all } s\geq 0.
\end{equation}
We notice that $f^{\#}(0)=\Vert f\Vert_{\dL^{\infty}}\in\br\cup\{+\infty\}$ and $f^{\#}(+\infty)=0$. It is well known that $\mu_f$ is right-continuous and that for all $s\geq 0$, $t\geq 0$,
\begin{equation}
f^{\#}(s)>t\quad\Longleftrightarrow\quad\mu_f(t)>s.
\end{equation}
Next, we define the rearrangement $f^*$ of $f$ by
\begin{equation}
f^*(\theta,v)=f^{\#}\left(\left\vert B(0,\sqrt{\theta^2+v^2})\cap\bt\times\br\right\vert\right),
\end{equation}
where $B(0,R)$ denotes the open ball in $\br^2$ centered at 0 with radius $R$. Then in order to generalize the rearrangements, we introduce for $\phi\in\cc^2(\bt)$ the quantity
\begin{equation}\label{a_phi}
a_{\phi}(e)=\left\vert \left\{(\theta,v)\in[0,2\pi]\times\br:\frac{v^2}{2}+\phi(\theta)<e\right\}\right\vert.
\end{equation}
From this quantity, we can adapt the proofs in Section 2.1 of \cite{Stability_HMFcos} to the case of $\phi\in\cc^2$ and we are able to define the generalized rearrangement with respect to the microscopic energy. We get the following properties gathered in Lemma \ref{rearrangement}. The last item of this lemma is proved in the Step 2 of the proof of Proposition 2.3 in \cite{Stability_HMFcos}.
\begin{lem}[Properties of $a_{\phi}$] We have the following statements.\label{rearrangement}
\begin{enumerate}
\item The function $a_{\phi}$ is continuous on $\br$, vanishes on $]-\infty,\min\phi]$ and is strictly increasing from $[\min\phi,+\infty[$ to $[0,+\infty[$.
\item  The function $a_{\phi}$ is invertible from $[\min\phi,+\infty[$ to $[0,+\infty[$, we denote its inverse by $a_{\phi}^{-1}$. This inverse satisfies
\vspace{-0.1cm}
\begin{equation}\label{inegalite_aphi}
\frac{s^2}{32\pi^2}+\min\phi\leq a_{\phi}^{-1}(s)\leq\frac{s^2}{32\pi^2}+\max\phi, \quad \forall s\in\br_+.
\end{equation}
\item Let $\phi\in\cc^{2}([0,2\pi])$ and let $a_{\phi}$ be the function defined by (\ref{a_phi}). Let $f$ be a nonnegative function in $\dL^1([0,2\pi]\times\br)$. Then the function
\[f^{*\phi}(\theta,v)=f^{\#}\left(a_{\phi}\left(\frac{v^2}{2}+\phi(\theta)\right)\right), \quad (\theta,v)\in[0,2\pi]\times\br\]
is equimeasurable to $f$, that is $\mu_{f^{*\phi}}=\mu_f$ where $\mu_f$ is defined by (\ref{mu_f}). The function $f^{*\phi}$ is called the decreasing rearrangement with respect to the microscopic energy $\frac{v^2}{2}+\phi(\theta)$.
\item Let $f\in\dL^1([0,2\pi]\times\br)$ and $\phi_f$ is the potential associated to $f$ defined by (\ref{phi}), we have 
\begin{equation}\label{utile1}
\iint\left(\frac{v^2}{2}+\phi_f(\theta)\right)(f(\theta,v)-f^{*\phi_f}(\theta,v))\mathrm{d}\theta\mathrm{d}v\geq 0,
\end{equation}
\end{enumerate}
\end{lem}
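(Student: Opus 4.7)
The plan is to treat the four items in turn, reducing everything to a concrete formula for $a_\phi$ obtained by a $v$-integration, and then deriving items (3) and (4) from the classical rearrangement machinery adapted to the microscopic energy. First I would write, by performing the $v$-integration explicitly,
\[
a_\phi(e) = 2\sqrt{2}\int_0^{2\pi}\sqrt{(e-\phi(\theta))_+}\,\dd\theta.
\]
Continuity on $\br$ follows at once from dominated convergence, and the vanishing on $]-\infty,\min\phi]$ is immediate since $(e-\phi(\theta))_+\equiv 0$ there. For $e>\min\phi$ strict monotonicity comes from the fact that the integrand is strictly increasing in $e$ on the open nonempty set $\{\phi<e\}$, while surjectivity onto $[0,+\infty[$ follows from the lower bound $a_\phi(e)\geq 4\pi\sqrt{2}\sqrt{(e-\max\phi)_+}$. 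The double-sided estimate in item (2) is then obtained by integrating the pointwise bounds $\sqrt{(e-\max\phi)_+}\leq\sqrt{(e-\phi(\theta))_+}\leq\sqrt{(e-\min\phi)_+}$ and solving for $e$, the right-hand inequality being trivial when $e\leq\max\phi$.

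For item (3) I would compute $\mu_{f^{*\phi}}$ directly. Fix $s\geq 0$; using the equivalence $f^{\#}(x)>s \Leftrightarrow \mu_f(s)>x$ recalled in the excerpt, one has
\[
\{f^{*\phi}>s\}=\left\{(\theta,v)\in[0,2\pi]\times\br:\ a_\phi\!\left(\tfrac{v^2}{2}+\phi(\theta)\right)<\mu_f(s)\right\}.
\]
Since $a_\phi$ is a strictly increasing bijection from $[\min\phi,+\infty[$ onto $[0,+\infty[$ by items (1)--(2), setting $t_s:=a_\phi^{-1}(\mu_f(s))$ this set coincides with $\{v^2/2+\phi(\theta)<t_s\}$, whose Lebesgue measure is $a_\phi(t_s)=\mu_f(s)$ by definition of $a_\phi$. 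Hence $\mu_{f^{*\phi}}(s)=\mu_f(s)$ for every $s\geq 0$.

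For item (4), the natural approach is a layer-cake decomposition combined with the bathtub principle. Writing $f=\int_0^{+\infty}\mathbf{1}_{\{f>t\}}\,\dd t$ and analogously for $f^{*\phi_f}$, and setting $e:=v^2/2+\phi_f$, one obtains
\[
\iint e\,(f-f^{*\phi_f})\,\dd\theta\,\dd v=\int_0^{+\infty}\!\!\left[\iint_{\{f>t\}} e\,\dd\theta\,\dd v-\iint_{\{f^{*\phi_f}>t\}} e\,\dd\theta\,\dd v\right]\dd t.
\]
By item (3) the two sublevel sets share the common Lebesgue measure $\mu_f(t)$, while the calculation of item (3) identifies $\{f^{*\phi_f}>t\}$ with the sublevel set $\{e<a_{\phi_f}^{-1}(\mu_f(t))\}$ of $e$ itself. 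The bathtub principle then gives the pointwise-in-$t$ nonnegativity of the bracket, and integration concludes the argument. The main technical point to verify carefully is the applicability of the bathtub minimality on the unbounded domain $[0,2\pi]\times\br$, which is ensured precisely by the finiteness of $a_\phi$ on $\br$ established in item (1).
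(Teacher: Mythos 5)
Your argument is correct, and it is essentially self-contained where the paper is not: the paper gives no proof of this lemma, but simply refers to Section 2.1 and to Step 2 of the proof of Proposition 2.3 of \cite{Stability_HMFcos} (and to \cite{Lemou_seul}), where exactly the machinery you describe is carried out --- the explicit formula $a_\phi(e)=2\sqrt{2}\int_0^{2\pi}\sqrt{(e-\phi(\theta))_+}\,\mathrm{d}\theta$ with its continuity, strict monotonicity and two-sided bound, the computation of $\mu_{f^{*\phi}}$ via the equivalence $f^{\#}(x)>s\Leftrightarrow\mu_f(s)>x$, and the layer-cake plus bathtub comparison for item (4). Two small points deserve one extra line each. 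In item (3), when $\mu_f(s)=0$ the identification $\{a_\phi(e)<\mu_f(s)\}=\{e<t_s\}$ degenerates (both sets are then empty), so the strict-monotonicity argument should be invoked only for $\mu_f(s)>0$ and the trivial case treated separately. In item (4), the microscopic energy $e=\frac{v^2}{2}+\phi_f(\theta)$ changes sign, so before applying Fubini to the layer-cake decomposition you should replace $e$ by the nonnegative weight $e-\min\phi_f$; this costs nothing, since $\min\phi_f\iint(f-f^{*\phi_f})=0$ by the equimeasurability proved in item (3) and the sublevel sets of $e-\min\phi_f$ coincide with those of $e$, so the bathtub comparison is unchanged.
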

The next lemma, proved in Section 3.1 of \cite{Lemou_seul}, is a technical lemma about rearrangements which will be used in Lemma \ref{Bphi}.
\begin{lem}\label{classic_egalite}
Let $\phi\in\cc^{2}([0,2\pi])$ and $f\in\dL^1([0,2\pi]\times\br)$, we have the following identity
\[\int_0^{2\pi}\int_{\br}\left(\frac{v^2}{2}+\phi(\theta)\right)f^{*\phi}(\theta,v)\dd\theta\dd v=\int_0^{+\infty}a_{\phi}^{-1}(s)f^{\#}(s)\dd s.\]
\end{lem}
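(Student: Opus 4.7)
The plan is to view both sides as integrals in the single energy variable $e=\frac{v^2}{2}+\phi(\theta)$ and to connect them via the change of variables $s=a_\phi(e)$. The key observation is that by construction, $f^{*\phi}(\theta,v)=f^\#(a_\phi(e(\theta,v)))$ depends on $(\theta,v)$ only through the microscopic energy $e(\theta,v)$, so the left-hand side can be rewritten as a one-dimensional integral after pushing the Lebesgue measure $\dd\theta\dd v$ forward to the energy line.

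Concretely, I would first observe that by the very definition (\ref{a_phi}) of $a_\phi$, the pushforward of the Lebesgue measure on $[0,2\pi]\times\br$ under the map $(\theta,v)\mapsto e(\theta,v)$ is exactly the Borel measure on $[\min\phi,+\infty[$ whose cumulative distribution function is $a_\phi$; equivalently, for every nonnegative Borel function $F:\br\to\br_+$,
\[\iint_{[0,2\pi]\times\br}F(e(\theta,v))\dd\theta\dd v=\int_{\min\phi}^{+\infty}F(e)\dd a_\phi(e),\]
where $\dd a_\phi$ denotes the Stieltjes measure of $a_\phi$. Applying this with $F(e)=e\,f^\#(a_\phi(e))$ transforms the left-hand side into $\int_{\min\phi}^{+\infty}e\,f^\#(a_\phi(e))\dd a_\phi(e)$. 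Since Lemma \ref{rearrangement}(2) ensures that $a_\phi$ is a continuous, strictly increasing bijection from $[\min\phi,+\infty[$ onto $[0,+\infty[$, the Stieltjes substitution $s=a_\phi(e)$ (so that $e=a_\phi^{-1}(s)$ and $\dd a_\phi(e)=\dd s$) finally turns this into $\int_0^{+\infty}a_\phi^{-1}(s)f^\#(s)\dd s$, which is the announced right-hand side.

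If one prefers to avoid Stieltjes machinery, there is an entirely equivalent elementary route via layer-cake: using the characterization $f^\#(s)>t\iff \mu_f(t)>s$ together with the monotonicity of $a_\phi$, one has
\[\{(\theta,v):f^{*\phi}(\theta,v)>t\}=\{(\theta,v):e(\theta,v)<a_\phi^{-1}(\mu_f(t))\},\]
and two applications of Fubini combined with $\int_0^{+\infty}\mathbf{1}_{\{f^\#(s)>t\}}\dd t=f^\#(s)$ deliver the identity directly. The only delicate point in either approach is that $e$ can take negative values since $\phi$ has zero mean, but $\phi$ is bounded by $\cc^2$-regularity, so the negative part of the integrand is trivially integrable on $[0,2\pi]$ and all exchanges of integrals are justified by the standard decomposition into positive and negative parts.
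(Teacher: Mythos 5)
Your proof is correct. Note, however, that the paper does not actually prove Lemma \ref{classic_egalite}: it is stated as a known identity and delegated to Section 3.1 of \cite{Lemou_seul}, so there is no in-paper argument to compare against. What you supply is a self-contained justification, and both of your routes are sound: the pushforward of Lebesgue measure under $(\theta,v)\mapsto e(\theta,v)$ is indeed the measure with cumulative distribution function $a_\phi$ (atomless by item (1) of Lemma \ref{rearrangement}), writing $e\,f^{\#}(a_\phi(e))=G(a_\phi(e))$ with $G(s)=a_\phi^{-1}(s)f^{\#}(s)$ on $[\min\phi,+\infty[$ and pushing forward once more under $a_\phi$ gives exactly the right-hand side; the layer-cake variant, resting on $f^{\#}(s)>t\Leftrightarrow\mu_f(t)>s$ and on $\{a_\phi(e)<\mu\}=\{e<a_\phi^{-1}(\mu)\}$ for $\mu>0$, is the argument closest in spirit to the one in \cite{Lemou_seul}, where the analogous identity is derived through the pseudo-inverse and Fubini. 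Your handling of the sign issue is also the right one: on the set where the integrand is negative one has $|e|\leq\max|\phi|$ and $f^{*\phi}\in\dL^1$ by equimeasurability, so the negative part is integrable and the identity holds in $[0,+\infty]$ after splitting into positive and negative parts. One small point worth making explicit if you write this up: the equality $\{a_\phi(e)<\mu_f(t)\}=\{e<a_\phi^{-1}(\mu_f(t))\}$ uses that $a_\phi$ vanishes on $]-\infty,\min\phi]$ and is strictly increasing beyond, so the case $\mu_f(t)=0$ must be checked separately (both sets are then empty on the range of $e$).
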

%\begin{proof}
%By using layer-cake representation and Fubini-Tonelli's theorem, we get
%\[\int_0^{2\pi}\int_{\br}\left(\frac{v^2}{2}+\phi(\theta)\right)f^{*\phi}(\theta,v)\dd\theta\dd v=\iint \left(\frac{v^2}{2}+\phi(\theta)\right)\left[\int_{t<f^{\#}(a_{\phi}(\frac{v^2}{2}+\phi(\theta)))}\dd t\right]\dd\theta\dd v.\]
%But we have
%\begin{align*}
%f^{\#}(a_{\phi}(\frac{v^2}{2}+\phi(\theta)))>t &\Longleftrightarrow \mu_f(t)>a_{\phi}\left(\frac{v^2}{2}+\phi(\theta))\right)\\
%&\Longleftrightarrow a_{\phi}^{-1}(\mu_f(t))>\frac{v^2}{2}+\phi(\theta) \text{ since }a_{\phi} \text{ is invertible. }
%\end{align*}
%Hence
%\begin{align*}
%\int_0^{2\pi}\int_{\br}\left(\frac{v^2}{2}+\phi(\theta)\right)f^{*\phi}(\theta,v)\dd\theta\dd v=&\iint \left(\frac{v^2}{2}+\phi(\theta)\right)\left[\int_{\frac{v^2}{2}+\phi(\theta)<a_{\phi}^{-1}(\mu_f(t))}\dd t\right]\dd\theta\dd v\\
%&=\int_0^{+\infty}B_{\phi}(\mu_f(t))\dd t,
%\end{align*}
%where $B_{\phi}(\mu)=\iint _{a_{\phi}(\frac{v^2}{2}+\phi(\theta))<\mu}\frac{v^2}{2}+\phi(\theta) \dd\theta\dd v$. But according to Lemma 1 in \cite{Lemou_seul}, we have
%\[\int_0^{+\infty}B_{\phi}(\mu_f(t))\dd t=\int_0^{\mu_f(t)}a_{\phi}^{-1}(s)\dd s.\]
%Then, we get
%\begin{align*}
%\int_0^{2\pi}\int_{\br}\left(\frac{v^2}{2}+\phi(\theta)\right)f^{*\phi}(\theta,v)\dd\theta\dd v&=\int_0^{+\infty}\left[\int_0^{\mu_f(t)}a_{\phi}^{-1}(s)\dd s\right]\dd t=\int_0^{\infty}\left[\int_{f^{\#}(s)>t}a_{\phi}^{-1}(s)\dd s\right]\dd t\\
%&=\int_0^{+\infty}f^{\#}(s)a_{\phi}^{-1}(s)\dd s, \, \text{ by using Fubini's theorem.}
%\end{align*}
%\end{proof}

In the rest of this Section, we adopt the following definition of minimizing sequences.
\begin{defi}[Minimizing sequence] \label{minimizing_sequence}
We shall say that $(f_n)_n$ is a minimizing sequence of (\ref{H0}) if $(f_n)_n$ is uniformly bounded and 
\[\mathcal{H}(f_n)\underset{n\to+\infty}{\longrightarrow}H_0 \quad \text{ and } \quad\Vert f_n^*-f_0^*\Vert_{\dL^1}\underset{n\to+\infty}{\longrightarrow}0 .\]
\end{defi}

As mentionned at the beginning of this section, we need to link $\ch(f_n)$ and $\ch(f^{*\phi_n})$ to get compactness for $f_n$. Hence, we introduce a second problem of minimization 
\begin{equation}\label{J0}
\cj_{f^*}^0=\!\!\underset{\int_0^{2\pi}\phi=0}{\inf}\cj_{f^*}(\phi) \text{ where } \cj_{f^*}(\phi)\!\!=\!\!\iint\left(\frac{v^2}{2}+\phi(\theta)\right)f^{*\phi}(\theta,v)\mathrm{d}\theta\mathrm{d}v+\frac{1}{2}\int_0^{2\pi}\phi'(\theta)^2\mathrm{d}\theta.
\end{equation}
\begin{lem}[Monotonicity properties of $\ch$ with respect to the transformation $f^{*\phi}$]\label{Inequalities1-2-3}
We have the following inequalities:
\begin{enumerate} 
\item Let $f\in \mathcal{E}$, for all $\phi \in H^2([0,2\pi])$ such that $\phi(0)=\phi(2\pi)$ and $\int_0^{2\pi}\phi=0$, we have $\mathcal{H}(f^{*\phi})\leq \cj_{f^*}(\phi)$.
\item For all $f\in\mathcal{E}$, $H_0\leq \mathcal{H}(f^{*\phi_f})\leq \cj_{f^*}(\phi_f)\leq \mathcal{H}(f)$ where $H_0$ is defined by (\ref{H0}). Besides $H_0=\cj_{f^*}^0$.
\end{enumerate}
\end{lem}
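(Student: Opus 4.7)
The structure parallels Lemma~\ref{inequality_relou1} and Lemma~\ref{inequality_relou}, with the strict-convexity argument used there replaced by the rearrangement inequality (\ref{utile1}) of Lemma~\ref{rearrangement}.

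For item (1), I would expand
\[
\cj_{f^*}(\phi)-\ch(f^{*\phi})=\iint \phi(\theta)\,f^{*\phi}(\theta,v)\,\dd\theta\,\dd v+\tfrac{1}{2}\|\phi'\|_{L^2}^2+\tfrac{1}{2}\|\phi'_{f^{*\phi}}\|_{L^2}^2.
\]
Since $\int_0^{2\pi}\phi\,\dd\theta=0$, the Poisson equation~(\ref{Poisson}) for $\phi_{f^{*\phi}}$ gives
\[
\iint\phi\,f^{*\phi}\,\dd\theta\,\dd v=\int_0^{2\pi}\phi\,\phi''_{f^{*\phi}}\,\dd\theta=-\int_0^{2\pi}\phi'\,\phi'_{f^{*\phi}}\,\dd\theta,
\]
the integration by parts being justified by $\phi\in H^2$, $\phi_{f^{*\phi}}\in H^1$ and the periodicity of both. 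Completing the square then yields the clean identity
\[
\cj_{f^*}(\phi)=\ch(f^{*\phi})+\tfrac{1}{2}\|\phi'-\phi'_{f^{*\phi}}\|_{L^2}^2,
\]
which gives $\ch(f^{*\phi})\leq \cj_{f^*}(\phi)$.

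For item (2), I would handle the three inequalities separately, taking $f\in Eq(f_0)\cap\mathcal{E}$ (so that $f^{*\phi_f}$ is admissible in (\ref{H0})). First, $\phi_f\in H^2$ is $2\pi$-periodic with zero average thanks to (\ref{phi}), so item~(1) applied with $\phi=\phi_f$ gives the middle inequality $\ch(f^{*\phi_f})\leq\cj_{f^*}(\phi_f)$. Next, item~(3) of Lemma~\ref{rearrangement} ensures $f^{*\phi_f}\in Eq(f)=Eq(f_0)$, so $f^{*\phi_f}$ is a competitor in (\ref{H0}) and $H_0\leq \ch(f^{*\phi_f})$. For the rightmost inequality, I would rewrite both functionals as $\iint(\tfrac{v^2}{2}+\phi_f)\cdot+\tfrac{1}{2}\|\phi'_f\|_{L^2}^2$, using the Poisson identity $\iint\phi_f f\,\dd\theta\,\dd v=-\|\phi'_f\|_{L^2}^2$ and the same identity with $f$ replaced by $f^{*\phi_f}$ (valid because $\|f^{*\phi_f}\|_{L^1}=\|f\|_{L^1}$, hence $\phi_{f^{*\phi_f}}$ satisfies the same Poisson problem tested against $\phi_f$). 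Subtracting, the difference becomes
\[
\ch(f)-\cj_{f^*}(\phi_f)=\iint\!\left(\tfrac{v^2}{2}+\phi_f(\theta)\right)\bigl(f(\theta,v)-f^{*\phi_f}(\theta,v)\bigr)\dd\theta\,\dd v,
\]
which is nonnegative by the rearrangement inequality~(\ref{utile1}).

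Finally, for $H_0=\cj_{f^*}^0$: on one hand, the chain just established gives $\cj_{f^*}^0\leq\cj_{f^*}(\phi_f)\leq\ch(f)$ for every $f\in Eq(f_0)\cap\mathcal{E}$, so taking the infimum yields $\cj_{f^*}^0\leq H_0$. On the other hand, for any admissible $\phi$ (i.e.\ $\phi\in H^2$ periodic with zero mean), $f_0^{*\phi}\in Eq(f_0)$ by Lemma~\ref{rearrangement}(3), whence $H_0\leq\ch(f_0^{*\phi})\leq\cj_{f^*}(\phi)$ by item~(1); taking the infimum over $\phi$ gives $H_0\leq\cj_{f^*}^0$. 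I do not expect any genuine obstacle: the argument is bookkeeping built on one identity (Poisson plus integration by parts) and one sign (the rearrangement inequality (\ref{utile1})), which substitutes here for the convexity of $j$ exploited in the finite-constraint analogues.
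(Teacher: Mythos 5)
Your proof is correct and follows essentially the same route as the paper: the completed-square identity $\cj_{f^*}(\phi)=\ch(f^{*\phi})+\tfrac{1}{2}\Vert\phi'-\phi'_{f^{*\phi}}\Vert_{\dL^2}^2$ for item (1), the decomposition $\ch(f)=\cj_{f^*}(\phi_f)+\iint(\tfrac{v^2}{2}+\phi_f)(f-f^{*\phi_f})$ combined with the sign from (\ref{utile1}) for item (2), and the two-sided chain for $H_0=\cj_{f^*}^0$ (which the paper leaves implicit and you spell out, correctly using that $f^{*\phi}=f_0^{*\phi}$ for $f\in Eq(f_0)$). The only cosmetic remark is that rewriting $\cj_{f^*}(\phi_f)$ via a second Poisson identity is unnecessary, since it is already in the form $\iint(\tfrac{v^2}{2}+\phi_f)f^{*\phi_f}+\tfrac{1}{2}\Vert\phi_f'\Vert_{\dL^2}^2$ by definition.
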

\begin{proof} 
The first item of this lemma is proved exactly like item (2) of Lemma \ref{inequality_relou}. Hence we have
\begin{equation}\label{norme2ter}
\cj_{f^*}(\phi)=\ch(f^{*\phi})+\frac{1}{2}\Vert \phi'_{f^{*\phi}}-\phi'\Vert_{\dL^2}^2
\end{equation}
Then, let us prove the right inequality of item (2). Let $f\in\ce$, the hamiltonian can be written as
\begin{align*}
\mathcal{H}(f)&=\int_0^{2\pi}\int_{\mathbb{R}}\left(\frac{v^2}{2}+\phi_f(\theta)\right)f^{*\phi_f}(\theta,v)\mathrm{d}\theta\mathrm{d}v+\frac{1}{2}\int_0^{2\pi}\phi'_f(\theta)^2\mathrm{d}\theta\\
&+\int_0^{2\pi}\int_{\mathbb{R}}\left(\frac{v^2}{2}+\phi_f(\theta)\right)(f(\theta,v)-f^{*\phi_f}(\theta,v))\mathrm{d}\theta\mathrm{d}v\\
&=\cj_{f^*}(\phi_f)+\int_0^{2\pi}\int_{\mathbb{R}}\left(\frac{v^2}{2}+\phi_f(\theta)\right)(f(\theta,v)-f^{*\phi_f}(\theta,v))\mathrm{d}\theta\mathrm{d}v.
\end{align*}
Using (\ref{utile1}), we get that $\mathcal{H}(f^{*\phi})\leq \cj_{f^*}(\phi)$. Thanks to the two above inequalities, we easily deduce $H_0=\cj_{f^*}^0$.
\end{proof}

\subsection{Existence of ground states}
This section is devoted to the proof of Theorem \ref{Existence3}. 
\subsubsection{Properties of the infimum}

\begin{lem}\label{Inf_fini_compacite3}The variational problem (\ref{H0}) satisfies the following statements.
\begin{enumerate}
\item The infimum (\ref{H0}) exists i.e. $H_0 >-\infty$.
\item For any minimizing sequence $(f_n)_n$ of the variational problem (\ref{H0}), we have the following properties:
\begin{enumerate}
\item There exists $\bar{f}\in\dL^1([0,2\pi]\times\br)$ such that $f_n\underset{n\to+\infty}{\longrightarrow}\bar{f}$ weakly in $\dL^1$.
\item We have $\Vert\phi_{f_n}-\phi_{\bar{f}}\Vert_{H^1}\underset{n\to+\infty}{\longrightarrow}0$. 
\end{enumerate}
\end{enumerate}
\end{lem}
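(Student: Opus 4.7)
The plan is to follow the pattern already established in Lemma \ref{Inf_fini_compacite1} and Lemma \ref{Inf_fini_compacite2}, exploiting the fact that a minimizing sequence in the sense of Definition \ref{minimizing_sequence} is uniformly bounded in $\dL^\infty$ and is, in an $\dL^1$ sense, asymptotically equimeasurable to $f_0$; these two ingredients together will play the role of the explicit $\dL^1$ and Casimir constraints from the previous sections.

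For item (1), I would first establish $H_0>-\infty$: any admissible $f\in Eq(f_0)\cap\ce$ satisfies $\Vert f\Vert_{\dL^1}=\Vert f_0\Vert_{\dL^1}=:M_1$, so by \eqref{Norme_infini_phi'} the electrostatic term is bounded, $\Vert\phi'_f\Vert_{\dL^2}^2\leq 2\pi\Vert W'\Vert_{\dL^\infty}^2 M_1^2$. Since the kinetic term is nonnegative, this gives $\ch(f)\geq -\pi\Vert W'\Vert_{\dL^\infty}^2 M_1^2$, hence the infimum is finite.

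For item (2), let $(f_n)_n$ be a minimizing sequence. Since the symmetric rearrangement is an $\dL^1$-isometry, $\Vert f_n\Vert_{\dL^1}=\Vert f_n^*\Vert_{\dL^1}\to\Vert f_0^*\Vert_{\dL^1}=M_1$, so $(\Vert f_n\Vert_{\dL^1})_n$ is bounded. Combined with the standing uniform $\dL^\infty$ bound of Definition \ref{minimizing_sequence}, this yields uniform control of $\Vert\phi'_{f_n}\Vert_{\dL^2}$ exactly as above and, via the identity $\Vert v^2 f_n\Vert_{\dL^1}=2\ch(f_n)+\Vert\phi'_{f_n}\Vert_{\dL^2}^2$, of the kinetic energy. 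The Dunford--Pettis criterion then applies: uniform integrability comes from the $\dL^\infty$ bound, tightness at $\vert v\vert=\infty$ from the bound on $\Vert v^2 f_n\Vert_{\dL^1}$, and tightness in $\theta$ is automatic on the bounded torus. Therefore, up to a subsequence, $f_n\rightharpoonup\bar f$ weakly in $\dL^1([0,2\pi]\times\br)$ for some $\bar f\in\dL^1$.

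For the strong $H^1$-convergence of the potentials, I would argue exactly as at the end of Lemma \ref{Inf_fini_compacite1}: for each fixed $\theta$, the kernel $(\tilde\theta,v)\mapsto W(\theta-\tilde\theta)$ (resp.\ $W'(\theta-\tilde\theta)$) belongs to $\dL^\infty([0,2\pi]\times\br)$, so the weak-$\dL^1$ convergence of $f_n$ immediately gives the pointwise convergences $\phi_{f_n}(\theta)\to\phi_{\bar f}(\theta)$ and $\phi'_{f_n}(\theta)\to\phi'_{\bar f}(\theta)$. The uniform bounds $\Vert\phi_{f_n}\Vert_{\dL^\infty}+\Vert\phi'_{f_n}\Vert_{\dL^\infty}\leq C(M_1)$ then allow dominated convergence on the bounded interval $[0,2\pi]$ to promote this to $\dL^2$-convergence of both $\phi_{f_n}$ and $\phi'_{f_n}$, i.e.\ $\Vert\phi_{f_n}-\phi_{\bar f}\Vert_{H^1}\to 0$. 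The only genuinely new point compared with Section~\ref{1contrainte} is the derivation of the $\dL^1$-bound on $f_n$ from the hypothesis $\Vert f_n^*-f_0^*\Vert_{\dL^1}\to 0$; this is immediate from the $\dL^1$-isometry of the rearrangement, and I do not anticipate any further obstacle.
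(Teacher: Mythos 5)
Your proposal is correct and follows essentially the same route as the paper, which itself simply refers back to the proof of Lemma \ref{Inf_fini_compacite1}: the $\dL^1$-bound comes from $\Vert f_n\Vert_{\dL^1}=\Vert f_n^*\Vert_{\dL^1}\to\Vert f_0^*\Vert_{\dL^1}$, the kinetic energy is controlled through the Hamiltonian identity and \eqref{Norme_infini_phi'}, Dunford--Pettis gives the weak $\dL^1$ limit, and the $H^1$-convergence of the potentials follows by testing the weak convergence against the bounded kernels $W$ and $W'$ and dominating on the torus. If anything, you spell out the Dunford--Pettis hypotheses (uniform integrability from the $\dL^\infty$ bound in Definition \ref{minimizing_sequence}, tightness in $v$ from the kinetic bound) more explicitly than the paper does.
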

The proof of item (1) from Lemma \ref{Inf_fini_compacite3} is similar to the one of Lemma \ref{Inf_fini_compacite1}.  In the spirit of Lemma \ref{Inf_fini_compacite1}, noticing that $\Vert f_n\Vert_{\dL^1}=\Vert f_n^*\Vert_{\dL^1}$ is bounded and using Dunford-Pettis's theorem, we get the weak convergence of $(f_n)_n$ in $\dL^1([0,2\pi]\times\br)$. The proof of item (b) is similar to the one of item (2) in Lemma \ref{Inf_fini_compacite1}.

%\begin{proof}
%Let  $(f_n)_n\in\mathcal{E} ^{\mathbb{N}}$ be a minimizing sequence of $\ref{H0}$. Thus $\mathcal{H}(f_n)\underset{n\to+\infty}{\longrightarrow}H_0$, $f_n^* \underset{n\to+\infty}{\longrightarrow}f_0^*$ and $\Vert f_n\Vert_{\mathrm{L}^{\infty}}<M$ (does not depend on n). Let us prove that we can extract from $f_n$ a convergent subsequence. By the Dunford-Pettis theorem see \cite{Dunford-Pettis}, if $\Vert f_n \Vert_{\dL^1}$, $\Vert v^2 f_n\Vert_{\dL^1}$ and $\Vert f_n \Vert_{\dL^{\infty}}$ are bounded hence  the sequence of functions $(f_n)_n$ is weakly compact in $\dL^1([0,2\pi]\times\mathbb{R})$.  We have
%\begin{itemize}
%\item $\Vert f_n\Vert_{\dL^1}=\Vert f_n^*\Vert_{\dL^1}\underset{n\to+\infty}{\longrightarrow}\Vert f_0^*\Vert_{\dL^1}=\Vert f_0\Vert_{\dL^1}$ so $\Vert f_n\Vert_{\dL^1}$ is bounded.
%\item $\Vert f_n\Vert_{\dL^{\infty}}\leq M$
%\item $\Vert v^2 f_n\Vert_{\dL^{\infty}}=2\mathcal{H}(f_n)+\int_0^{2\pi}\phi_{f_n}'^2\mathrm{d}\theta \leq 2 \mathcal{H}(f_n)+2\pi\Vert W' \Vert_{\dL^{\infty}}^2\Vert f_n\Vert_{\dL^1}^2$. $(\mathcal{H}(f_n))_n$ and $(\Vert f_n\Vert_{\dL^1})_n$ are bounded, therefore $\Vert v^2 f_n\Vert_{\dL^1}$ is bounded.
%\end{itemize}
%Hence by Dunford-Pettis theorem, there exists $\bar{f}\in \dL^1([0,2\pi]\times\br)$ such that $f_n \underset{n\to +\infty}{\rightharpoonup}\bar{f}$ in $\dL^1_w([0,2\pi]\times\br)$.
%With the same arguments as in \ref{Partie_existence1}, we get the last point of this lemma.
%\end{proof}

\subsubsection{Proof of Theorem \ref{Existence3}} \label{existence_limite_faible}
We are now ready to prove Theorem \ref{Existence3}.\\

\textbf{Step 1}: Existence of a minimizer.\\

From item (1) of Lemma \ref{Inf_fini_compacite3}, we know that $H_0$ is finite. Let us show that there exists a function which minimizes the variational problem (\ref{H0}). Let  $(f_n)_n\in\mathcal{E} ^{\mathbb{N}}$ be a minimizing sequence of (\ref{H0}). From item (a) of Lemma \ref{Inf_fini_compacite3}, there exists $\bar{f}\in\dL^1([0,2\pi]\times\br)$ such that $f_n \underset{n\to +\infty}{\rightharpoonup}\bar{f}$ in $\dL^1_w$. From item (b) of Lemma \ref{Inf_fini_compacite3}, $\phi_{f_n}$ strongly converges to $\phi_{\bar{f}}$ in $\dL^2([0,2\pi]\times\br)$ and $\phi'_{f_n}$ strongly converges to $\phi'_{\bar{f}}$ in $\dL^2([0,2\pi]\times\br)$.

In the following paragraphs, we will note $\phi_n:=\phi_{f_n}$ and $\bar{\phi}:=\phi_{\bar{f}}$.  Notice using item (2) of Lemma \ref{Inequalities1-2-3} that $(\phi_n)_n$ is a minimizing sequence of (\ref{J0}). As in the proof of Theorem \ref{Existence2}, we introduce a new minimizing sequence which has better compactness properties than $(f_n)_n$. The sequence $(f_0^{*\phi_n})_n$ is well-defined according to Lemma \ref{rearrangement}. Since $(\phi_n)_n$ is a minimizing sequence of (\ref{J0}) and using the second item of Lemma \ref{Inequalities1-2-3}, we directly get $\mathcal{H}(f_0^{*\phi_n})\underset{n\to +\infty}{\longrightarrow} H_0$. The next step is to prove that $\mathcal{H}(f_0^{*\phi_n})\underset{n\to +\infty}{\longrightarrow}\mathcal{H}(f_0^{*\bar{\phi}})$. In order to do that, let us show that $f_0^{*\phi_n}\underset{n\to +\infty}{\longrightarrow}f_0^{*\bar{\phi}}$ strongly in $\dL^1([0,2\pi]\times\br)$. From general properties of rearrangements, see \cite{Kavian} and \cite{Loss}, we have $\Vert f_0^{*\phi_n}\Vert_{\dL^1}=\Vert f_0\Vert_{\dL^1}$ and $\Vert f_0^{*\bar{\phi}}\Vert_{\dL^1}=\Vert f_0\Vert_{\dL^1}$ and therefore using Brezis-Lieb, see \cite{Brezis_Lieb}, it is sufficient to show that $f_0^{*\phi_n}\underset{n\to +\infty}{\longrightarrow}f_0^{*\bar{\phi}}$ a.e. to get the strong convergence in $\dL^1([0,2\pi]\times\br)$. Using the dominated convergence theorem, we easily get that
\[a_{\phi_n}\left(\frac{v^2}{2}+\phi_n(\theta)\right)\underset{n\to +\infty}{\longrightarrow}a_{\bar{\phi}}\left(\frac{v^2}{2}+\bar{\phi}(\theta)\right) \text{ a.e. up to a subsequence.}\] 
As by hypothesis, $f_0\in \ce\cap\mathcal{C}^0([0,2\pi]\times\mathbb{R})$, $f_0^{\#}$ is continuous then $f_0^{*\phi_n}\underset{n\to +\infty}{\longrightarrow}f_0^{*\bar{\phi}}$ a.e. up to an extraction of a subsequence. Thus, we get $\Vert f_0^{*\phi_n}-f_0^{*\bar{\phi}}\Vert_{\dL^1}\underset{n\to +\infty}{\longrightarrow}0$. Then, from classical inequality about lower semicontinuous functions (see \cite{Kavian}) and the convergence in $\dL^2([0,2\pi]\times\br)$ of $\phi_n$, we deduce that 
\vspace{-0.1cm}
\begin{equation}\label{z}
H_0\geq \iint \frac{v^2}{2}f_0^{*\bar{\phi}}(\theta,v)\mathrm{d}\theta\mathrm{d}v-\frac{1}{2}\int_0^{2\pi}\phi_{f_0^{*\bar{\phi}}}'(\theta)^2\mathrm{d}\theta=\ch(f_0^{*\bar{\phi}})
\end{equation}
Since $f_0^{*\bar{\phi}}\in \ce$ and is equimeasurable to $f_0$, we get $H_0\leq \ch(f_0^{*\bar{\phi}})$. Hence with the inequality (\ref{z}), we deduce  $H_0=\ch(f_0^{*\bar{\phi}})$ and $f_0^{*\bar{\phi}}$ is a minimizer of (\ref{H0}).\\

\textbf{Step 2}: The minimizer is a steady state of (\ref{HMF}).\\

The minimizer $f_0^{*\bar{\phi}}$ is a stationary state of the system (\ref{HMF}) and to prove that it is sufficient to show that $\bar{\phi}=\phi_{f_0^{*\bar{\phi}}}$. The proof is similar to the one of two-constraints case in Section \ref{Proof2}, we use Lemma \ref{Inequalities1-2-3} and equality (\ref{norme2ter}) to get the result.

\subsection{Orbital stability of the ground states}
\subsubsection{Proof of Theorem \ref{Stability3}}\label{OS3}

This section is devoted to the proof of Theorem \ref{Stability3}. As we do not have the uniqueness of the minimizers under constraint of equimeasurability, we can only get the orbital stability of the set of minimizers and not the orbital stability of each minimizer.\\

First, we need to the following  lemma which is at the heart of the proof of the compactness of minimizing sequences. This lemma will be proved at the end of the proof of Theorem \ref{Stability3}.
\begin{lem}\label{Bphi}
Let $f_0\in\ce\cap\mathcal{C}^0([0,2\pi]\times\mathbb{R})$ and let $(f_n)_n$ be a minimizing sequence of (\ref{H0}). Then $(f_n)_n$ has a weak limit $\bar{f}$ in $\dL^1([0,2\pi]\times\br)$. Denoting $\bar{\phi}:=\phi_{\bar{f}}$, we have
\[\int_0^{\Vert f_0\Vert_{\dL^{\infty}}}B_{\bar{\phi}}(\mu_{f_0}(t)+\beta_{f_n,f_0^{*\bar{\phi}}}(t))+B_{\bar{\phi}}(\mu_{f_0}(t)-\beta_{f_n,f_0^{*\bar{\phi}}}(t))-2B_{\bar{\phi}}(\mu_{f_0}(t))\mathrm{d}t\underset{n\to+\infty}{\longrightarrow}0\]
where 
\vspace{-0.2cm}
\begin{align}\label{beta}
\begin{cases}
&\beta_{f,g}(t)=\vert\{(\theta,v)\in [0,2\pi]\times\mathbb{R}: f(\theta,v)\leq t< g(\theta,v)\}\vert,\\
&B_{\bar{\phi}}(\mu)=\iint_{\{a_{\bar{\phi}}(\frac{v^2}{2}+\bar{\phi}(\theta))<\mu\}}\frac{v^2}{2}+\bar{\phi}(\theta)\mathrm{d}\theta\mathrm{d}v.
\end{cases}
\end{align}
\end{lem}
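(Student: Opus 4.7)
The first step is to observe that $B_{\bar\phi}$ is $\cc^1$ and convex. A change of variables $s = a_{\bar\phi}(e)$ (combined with Fubini and the identity $\int_{\br}\sqrt{(a-v^2/2)_+}\mathrm{d}v=\frac{\pi}{2}(a)_+$ that is implicit in the very definition of $a_{\bar\phi}$) yields
\[
B_{\bar\phi}(\mu) = \int_0^\mu a_{\bar\phi}^{-1}(s)\,\mathrm{d}s,
\]
so $B_{\bar\phi}' = a_{\bar\phi}^{-1}$ is nondecreasing by Lemma~\ref{rearrangement}. Since $\beta_{f_n,f_0^{*\bar\phi}}(t) \leq \mu_{f_0^{*\bar\phi}}(t) = \mu_{f_0}(t)$, convexity implies that the integrand in the statement is nonnegative, and it suffices to produce an upper bound for its integral that goes to zero.

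Next I would establish a pointwise (in $t$) excess estimate. Write $A_n(t) = \{f_n > t\}$, $A_0(t) = \{f_0^{*\bar\phi} > t\}$ and $E_{\bar\phi}(\theta,v) := \frac{v^2}{2}+\bar\phi(\theta)$. By definition of the rearrangement, $A_0(t) = \{E_{\bar\phi} < a_{\bar\phi}^{-1}(\mu_{f_0}(t))\}$ is exactly the sublevel set of $E_{\bar\phi}$ of measure $\mu_{f_0}(t)$. Hence the slice $A_n(t)\setminus A_0(t)$, of measure $\beta_{f_0^{*\bar\phi},f_n}(t)$, lies in $\{E_{\bar\phi}\geq a_{\bar\phi}^{-1}(\mu_{f_0}(t))\}$, and the slice $A_0(t)\setminus A_n(t)$, of measure $\beta_{f_n,f_0^{*\bar\phi}}(t)$, lies in $A_0(t)$. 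Minimizing and maximizing energy on these two slices subject to the measure constraint gives
\[
\iint_{A_n(t)\setminus A_0(t)} E_{\bar\phi} \geq B_{\bar\phi}\bigl(\mu_{f_0}(t) + \beta_{f_0^{*\bar\phi},f_n}(t)\bigr) - B_{\bar\phi}(\mu_{f_0}(t)),
\]
\[
\iint_{A_0(t)\setminus A_n(t)} E_{\bar\phi} \leq B_{\bar\phi}(\mu_{f_0}(t)) - B_{\bar\phi}\bigl(\mu_{f_0}(t) - \beta_{f_n,f_0^{*\bar\phi}}(t)\bigr),
\]
and subtraction produces the key pointwise inequality
\[
\iint_{A_n(t)} E_{\bar\phi} - \iint_{A_0(t)} E_{\bar\phi} \geq B_{\bar\phi}\bigl(\mu_{f_0}(t) + \beta_{f_0^{*\bar\phi},f_n}(t)\bigr) + B_{\bar\phi}\bigl(\mu_{f_0}(t) - \beta_{f_n,f_0^{*\bar\phi}}(t)\bigr) - 2B_{\bar\phi}(\mu_{f_0}(t)).
\]

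I would then integrate this in $t$ and use the layer-cake identity
\[
\iint E_{\bar\phi}\,(f_n - f_0^{*\bar\phi})\,\mathrm{d}\theta\,\mathrm{d}v = \int_0^{+\infty}\!\!\left[\iint_{A_n(t)} E_{\bar\phi} - \iint_{A_0(t)} E_{\bar\phi}\right]\mathrm{d}t.
\]
The left-hand side can be computed explicitly: applying $\iint \bar\phi\,f = -\int \bar\phi'\phi_f'$ (integration by parts, using the Poisson equation (\ref{Poisson}) and $\int \bar\phi = 0$) and $\iint \frac{v^2}{2}f = \ch(f) + \frac{1}{2}\|\phi_f'\|_{\dL^2}^2$, together with the identification $\phi_{f_0^{*\bar\phi}} = \bar\phi$ established in Step~2 of the proof of Theorem~\ref{Existence3}, one gets
\[
\iint E_{\bar\phi}\,(f_n - f_0^{*\bar\phi})\,\mathrm{d}\theta\,\mathrm{d}v = \ch(f_n) - H_0 + \tfrac{1}{2}\|\phi_n' - \bar\phi'\|_{\dL^2}^2 \underset{n\to+\infty}{\longrightarrow} 0,
\]
by the definition of minimizing sequence and by Lemma~\ref{Inf_fini_compacite3}(2)(b). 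Finally I would pass from $\beta_{f_0^{*\bar\phi},f_n}$ in the ``$+\beta$'' slot to the symmetric form stated in the lemma: since $\beta_{f_0^{*\bar\phi},f_n}(t) - \beta_{f_n,f_0^{*\bar\phi}}(t) = \mu_{f_n}(t) - \mu_{f_0}(t)$, whose $\dL^1(\mathrm{d}t)$-norm is dominated by $\|f_n^* - f_0^*\|_{\dL^1}\to 0$, and since $B_{\bar\phi}$ is Lipschitz on the (uniformly bounded) range in play, this substitution incurs only an $o(1)$ error. A similar $o(1)$ argument, again using $\mu_{f_n}(t)\to 0$ for $t\geq \|f_0\|_{\dL^\infty}$ together with the uniform $\dL^\infty$-bound on $f_n$, takes care of the tail $t\geq \|f_0\|_{\dL^\infty}$ where $A_0(t)=\emptyset$.

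The main obstacle is establishing the sharp pointwise excess estimate: it requires the explicit description of $A_0(t)$ as a sublevel set of $E_{\bar\phi}$, together with the layer-cake representation $B_{\bar\phi}(\mu) = \int_0^\mu a_{\bar\phi}^{-1}$ which makes the minimum of $\iint_S E_{\bar\phi}$ over measurable sets $S$ of prescribed measure explicitly tractable. A secondary technical difficulty is the reconciliation of the two forms with $\beta_{f_0^{*\bar\phi},f_n}$ and $\beta_{f_n,f_0^{*\bar\phi}}$, whose $o(1)$ difference is exactly what is guaranteed by the $\dL^1$-convergence of rearrangements built into the definition of minimizing sequence (Definition~\ref{minimizing_sequence}).
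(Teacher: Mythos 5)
Your plan follows the same architecture as the paper's proof: the convexity defect is bounded above by an energy term (your $\iint E_{\bar\phi}(f_n-f_0^{*\bar\phi})$, the paper's $A_n$, computed identically as $\ch(f_n)-\ch(f_0^{*\bar\phi})+\tfrac12\Vert\phi_n'-\bar\phi'\Vert_{\dL^2}^2\to 0$) plus a correction term controlled by $\Vert f_n^*-f_0^*\Vert_{\dL^1}$. The only structural difference is that the paper imports the key inequality wholesale from the remark following Theorem~1 in \cite{Lemou_seul}, whereas you rederive it via the bathtub principle and the representation $B_{\bar\phi}(\mu)=\int_0^\mu a_{\bar\phi}^{-1}(s)\,\dd s$; that derivation is correct and is a welcome addition, since the paper leaves it entirely to the citation.

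There is, however, a genuine gap in your treatment of the correction term (the paper's $B_n$), which is precisely where the paper spends most of its effort. You justify the substitution of $\beta_{f_0^{*\bar\phi},f_n}$ by $\beta_{f_n,f_0^{*\bar\phi}}$ by saying that $B_{\bar\phi}$ is ``Lipschitz on the (uniformly bounded) range in play''. This is false: the arguments of $B_{\bar\phi}$ range over $[0,\mu_{f_0}(t)+\mu_{f_n}(t)]$, and since the domain $[0,2\pi]\times\br$ has infinite measure, $\mu_{f_0}(t)$ and $\mu_{f_n}(t)$ blow up as $t\to 0^+$ (e.g.\ for a Maxwellian $f_0$); meanwhile $B_{\bar\phi}'=a_{\bar\phi}^{-1}$ grows quadratically by (\ref{inegalite_aphi}), so $B_{\bar\phi}$ is not Lipschitz on the relevant set and the error is really of size $\int_0^{\Vert f_0\Vert_{\dL^\infty}}\bigl(1+\mu_{f_0}(t)^2+\mu_{f_n}(t)^2\bigr)\vert\mu_{f_n}(t)-\mu_{f_0}(t)\vert\,\dd t$, which does not follow from $\Vert f_n^*-f_0^*\Vert_{\dL^1}\to 0$ alone. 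The paper's cited inequality is arranged so that the weight is $a_{\bar\phi}^{-1}(2\mu_{f_0}(t))$, depending on $f_0$ only; the integrand is then dominated by a fixed function of the form $C\mu_{f_0}(t)+\mu_{f_0}(t)^3/(8\pi^2)$, and the integrability of $\mu_{f_0}^3$ is itself nontrivial --- it is proved via $\int_0^{+\infty}\mu_{f_0}(t)^3\dd t=3\int_0^{+\infty}s^2f_0^{\#}(s)\dd s$, the bound (\ref{inegalite_aphi}), Lemma~\ref{classic_egalite} and the finiteness of $\ch(f_0^{*\bar\phi})=H_0$. Your version, with the $n$-dependent weight $\mu_{f_n}^2$, needs an additional argument (e.g.\ a splitting of the $t$-integral according to whether $\mu_{f_n}\leq 2\mu_{f_0}$, plus the same $\mu_{f_0}^3$ integrability) before the dominated convergence can be run. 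The tail $t\geq\Vert f_0\Vert_{\dL^\infty}$ is handled correctly since there $\mu_{f_n}(t)\leq\Vert f_n\Vert_{\dL^1}/\Vert f_0\Vert_{\dL^\infty}$ is genuinely bounded.
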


\textbf{Step 1}: Compactness of the minimizing sequences\\ 

Let $(f_n)_n$ be a minimizing sequence of (\ref{H0}), let us show that $(f_n)_n$ is compact in $\ce$. Using Lemma \ref{Inf_fini_compacite3}, there exists $\bar{f}\in\dL^1$ such that $f_n\underset{n\to+ \infty}{\rightharpoonup}\bar{f}$ weakly in  $\dL^1([0,2\pi]\times\br)$ and $\phi_n\underset{n\to+\infty}{\longrightarrow}\bar{\phi}$ strongly in $\dL^2([0,2\pi]\times\br)$ where $\bar{\phi}:=\phi_{\bar{f}}$. Arguing as in the proof of Theorem \ref{Existence3} in Section \ref{existence_limite_faible}, we also get  $f_0^{*\phi_n}\underset{n\to+\infty}{\longrightarrow}f_0^{*\bar{\phi}}$ strongly in $\dL^1([0,2\pi]\times\br)$. Our aim is now to show that $\Vert f_n -f_0^{*\bar{\phi}}\Vert_{\dL^1}\underset{n\to+\infty}{\longrightarrow}0$. In order to do that, we will use some techniques about rearrangements introduced in \cite{Lemou_seul}. In particular, we will use the following equality established in the proof of Theorem 1 in Section 2.3 in \cite{Lemou_seul}
\vspace{-0.1cm}
\begin{equation}\label{egalite_compacite}
\Vert f_n-f_0^{*\bar{\phi}}\Vert_{\dL^1}=2\int_0^{+\infty}\beta_{f_n,f_0^{*\bar{\phi}}}(t)\dd t+\Vert f_n\Vert_{\dL^1}-\Vert f_0\Vert_{\dL^1}
\end{equation}
where $\beta_{f,g}$ is defined in (\ref{beta}). The second term of (\ref{egalite_compacite}): $\Vert f_n\Vert_{\dL^1}-\Vert f_0\Vert_{\dL^1}$ goes to 0 when $n$ goes to infinity. Indeed, according to Definition \ref{minimizing_sequence} of a minimizing sequence, we have: $\Vert f_n^*-f_0^*\Vert_{\dL^1}\underset{n\to+\infty}{\longrightarrow}0$ then $\Vert f_n^*\Vert_{\dL^1}=\Vert f_n \Vert_{\dL^1}\underset{n\to+\infty}{\longrightarrow}\Vert f_0^*\Vert_{\dL^1}=\Vert f_0\Vert_{\dL^1}$ using rearrangements properties, see \cite{Kavian}. Hence to prove that: $\Vert f_n-f_0^{*\bar{\phi}}\Vert_{\dL^1}\underset{n\to+\infty}{\longrightarrow}0$, we need to prove that $\int_0^{+\infty}\beta_{f_n,f_0^{*\bar{\phi}}}(t)\dd t\underset{n\to+\infty}{\longrightarrow}0$. For this purpose, it is sufficient to show that $\beta_{f_n,f_0^{*\bar{\phi}}}(t)\underset{n\to+\infty}{\longrightarrow}0$. Indeed, this a direct application of the dominated convergence theorem
\begin{itemize}
\item $\beta_{f_n,f_0^{*\bar{\phi}}}(t)\underset{n\to+\infty}{\longrightarrow}0$,
\item $0\leq \beta_{f_n,f_0^{*\bar{\phi}}}(t)\leq \mu_{f_0}(t)$ and $\int_0^{+\infty}\mu_{f_0}(t)\dd t=\Vert f_0\Vert_{\dL^1}$ using Fubini's theorem.
\end{itemize}
To get the a.e. convergence to 0 of $\beta_{f_n,f_0^{*\bar{\phi}}}(t)$, we will use Lemma \ref{Bphi}. By convexity of $B_{\bar{\phi}}$ given by Theorem 1 in \cite{Lemou_seul}, 
\vspace{-0.1cm}
\[B_{\bar{\phi}}(\mu_{f_0}(t)+\beta_{f_n,f_0^{*\bar{\phi}}}(t))+B_{\bar{\phi}}(\mu_{f_0}(t)-\beta_{f_n,f_0^{*\bar{\phi}}}(t))-2B_{\bar{\phi}}(\mu_{f_0}(t))\geq 0\] 
therefore Lemma \ref{Bphi} implies that 
\vspace{-0.1cm}
\[B_{\bar{\phi}}(\mu_{f_0}(t)+\beta_{f_n,f_0^{*\bar{\phi}}}(t))+B_{\bar{\phi}}(\mu_{f_0}(t)-\beta_{f_n,f_0^{*\bar{\phi}}}(t))-2B_{\bar{\phi}}(\mu_{f_0}(t)) \underset{n\to+\infty}{\longrightarrow}0 \text{ for almost } t\geq 0.\]
Notice that $\beta_{f_n,f_0^{*\bar{\phi}}}(0)=0$ and for all $t>0$,
\vspace{-0.2cm}
\[0<\beta_{f_n,f_0^{*\bar{\phi}}}(t)\leq \frac{1}{t}\Vert f\Vert_{\dL^1}.\] 
Thus the sequence $(\beta_{f_n,f_0^{*\bar{\phi}}}(t))_n$ is bounded and has a convergent subsequence. Let us suppose that $\beta_{f_n,f_0^{*\bar{\phi}}}(t)\underset{n\to+\infty}{\longrightarrow}l\neq 0$, then by strict convexity of $B_{\bar{\phi}}$,
\begin{align*}
B_{\bar{\phi}}(\mu_{f_0}(t)&+\beta_{f_n,f_0^{*\bar{\phi}}}(t))+B_{\bar{\phi}}(\mu_{f_0}(t)-\beta_{f_n,f_0^{*\bar{\phi}}}(t))-2B_{\bar{\phi}}(\mu_{f_0}(t)) \\
&\underset{n\to+\infty}{\longrightarrow}B_{\bar{\phi}}(\mu_{f_0}(t)+l)+B_{\bar{\phi}}(\mu_{f_0}(t)-l)-2B_{\bar{\phi}}(\mu_{f_0}(t))>0.
\end{align*} 
Absurd then $\beta_{f_n,f_0^{*\bar{\phi}}}(t)\underset{n\to+\infty}{\longrightarrow}0$ for almost $t\geq 0$. 
Hence $\Vert f_n-f_0^{*\bar{\phi}}\Vert_{\dL^1}\underset{n\to+\infty}{\longrightarrow}0$. Besides we have proved that $f_n\underset{n\to+\infty}{\rightharpoonup}\bar{f}$ weakly in $\dL^1([0,2\pi]\times\br)$, hence by uniqueness of the limit, we get $f_0^{*\bar{\phi}}=\bar{f}$. Since by definition, a minimizing sequence is uniformly bounded, to prove the compactness of the sequence $(f_n)_n$ in the energy space $\ce$, there remains show that
\[\Vert v^2(f_n-\bar{f})\Vert_{\dL^1}\underset{n\to+\infty}{\longrightarrow}0 .\]  
Notice that
\vspace{-0.2cm}
\[\iint v^2(f_n(\theta,v)-\bar{f}(\theta,v))\mathrm{d}\theta\mathrm{d}v=2(\mathcal{H}(f_n)-\mathcal{H}(\bar{f}))+\Vert\phi'_n\Vert_{\dL^2}^2-\Vert \bar{\phi}'\Vert_{\dL^2}^2,\]
thus $\Vert v^2 f_n\Vert_{\dL^1}\underset{n\to+\infty}{\longrightarrow}\Vert v^2\bar{f}\Vert_{\dL^1}$ since $(f_n)_n$ is a minimizing sequence and $\bar{f}$ is a minimizer. Moreover $v^2 f_n \underset{n\to+\infty}{\longrightarrow}v^2\bar{f}$ up to an extraction of a subsequence since $f_n \underset{n\to+\infty}{\longrightarrow}\bar{f}$ strongly in $\dL^1$. Thanks to Brezis Lieb's lemma (see \cite{Brezis_Lieb}), we deduce that $\Vert v^2(f_n-\bar{f})\Vert_{\dL^1}\underset{n\to+\infty}{\longrightarrow}0$. To conclude, we have proved that the sequence $(f_n)_n$ is compact in $\mathcal{E}$.\\

\textbf{Step 2}:  Proof of the orbital stability \\

Let us argue by contradiction, let $f_{i_0}$ be a steady state of (\ref{HMF}) which minimizes (\ref{H0}). Assume that $f_{i_0}$ is orbitally unstable. Then there exist $\varepsilon_0>0$, a sequence $(f_{init}^n)_n\in\ce^{\bn}$ and a sequence $(t_n)_n\in(\mathbb{R}_*^+)^{\bn}$ such that $ f_{init}^n\xrightarrow{\ce}f_{i_0}$ and for all $n$, for all $\theta_0\in [0,2\pi]$, for all $f_i$ minimizer of (\ref{H0}), 
\begin{align}\label{etoile}
\begin{cases}
&\Vert f^n(t_n,\theta+\theta_0,v)-f_{i}(\theta,v)\Vert_{\dL^1}>\varepsilon_0,\\
&\text{or }\Vert v^2(f^n(t_n,\theta+\theta_0,v)-f_{i}(\theta,v))\Vert_{\dL^1}>\varepsilon_0,\\
\end{cases}
\end{align}
where $f^n(t_n,\theta,v)$ is a solution to (\ref{HMF}) with initial data $f_{init}^n$. Let $g_n(\theta,v)=f^n(t_n,\theta,v)$. Notice that 
\begin{align*}
\Vert (f_{init}^n)^*-f_0^*\Vert_{\dL^1}&=\Vert (f_{init}^n)^*-f_{i_0}^*\Vert_{\dL^1}\quad\text { since }f_{i_0}\in Eq(f_0),\\
&\leq \Vert f_{init}^n-f_{i_0}\Vert_{\dL^1}\quad  \text{ by contractivity of rearrangement (see \cite{Kavian}), }
\end{align*} 
but from conservation properties of the flow (\ref{HMF}), we have $g_n ^*\!=\!(f_{init}^n)^*$ together with \mbox{$\Vert g_n \Vert_{\dL^{\infty}}\!=\!\Vert f_{init}^n\Vert_{\dL^{\infty}}$.} Therefore $g_n^* \underset{n\to+\infty}{\longrightarrow}f_0^*$ strongly in $\dL^1$ and $(g_n)_n$ is uniformly bounded. Finally, from item (2) of Lemma \ref{Inequalities1-2-3} and from the conservation property of the flow (\ref{HMF}), we have
\[H_0\leq \mathcal{H}(f_0^{*\phi_{g_n}})\leq\mathcal{H}(g_n)\leq\mathcal{H}(f_{init}^n)\underset{n\to+\infty}{\longrightarrow}H_0.\]
Thus $\mathcal{H}(g_n)\underset{n\to+\infty}{\longrightarrow}H_0$ and the sequence $(g_n)_n$ is a minimizing sequence of (\ref{H0}). According to the previous step, this sequence is compact, hence, up to an extraction of a subsequence, there exists $f_{I}\in\ce$ such that $ g_n\xrightarrow{\ce}f_{I}$. This implies that 
\begin{equation}\label{converge}
\Vert g_n-f_{I}\Vert_{\dL^1}\underset{n\to+\infty}{\longrightarrow}0\quad \text{ and } \quad \Vert v^2(g_n -f_I)\Vert_{\dL^1}\underset{n\to+\infty}{\longrightarrow}0.
\end{equation}
Arguing as in the proof of Theorem \ref{Orbital_stability1} in Section \ref{OS1}, we prove that $\ch(f_I)= H_0$ and that $f_I$ is equimeasurable to $f_{i_0}$. We deduce that $f_I$ is equimeasurable to $f_0$ and hence this is a minimizer of (\ref{H0}). We get a contradiction with (\ref{converge}) and (\ref{etoile}). There remains to show Lemma \ref{Bphi}.

\begin{proof}[Proof of Lemma \ref{Bphi}]
The existence of the weak limit $\bar{f}$ is given by item (3) of Lemma \ref{Inf_fini_compacite3}. Many techniques in this proof have been introduced in \cite{Lemou_seul}.
By convexity of $B_{\bar{\phi}}$, see Theorem 1 in \cite{Lemou_seul}, we have 
\[\int_0^{\Vert f_0\Vert_{\dL^{\infty}}}B_{\bar{\phi}}(\mu_{f_0}(t)+\beta_{f_n,f_0^{*\bar{\phi}}}(t))+B_{\bar{\phi}}(\mu_{f_0}(t)-\beta_{f_n,f_0^{*\bar{\phi}}}(t))-2B_{\bar{\phi}}(\mu_{f_0}(t))\mathrm{d}t\geq 0.\]
Using the remark following Theorem 1 in \cite{Lemou_seul} , we have 
\[\int_0^{\Vert f_0\Vert_{\dL^{\infty}}}B_{\bar{\phi}}(\mu_{f_0}(t)+\beta_{f_n,f_0^{*\bar{\phi}}}(t))+B_{\bar{\phi}}(\mu_{f_0}(t)-\beta_{f_n,f_0^{*\bar{\phi}}}(t))-2B_{\bar{\phi}}(\mu_{f_0}(t))\mathrm{d}t
\leq A_n + B_n\]
where 
\begin{align*}
\begin{cases}
&A_n=\int_0^{2\pi}\int_{\mathbb{R}}\left(\frac{v^2}{2}+\bar{\phi}(\theta)\right)(f_n(\theta,v)-f_0^{*\bar{\phi}}(\theta,v))\mathrm{d}\theta\mathrm{d}v,\\
&B_n=\int_0^{+\infty}[a_{\bar{\phi}}^{-1}(2\mu_{f_0}(s))\beta_{f_n^*,f_0^*}(s)-a_{\bar{\phi}}^{-1}(\mu_{f_0}(s))\beta_{f_0^*,f_n^*}(s)]\mathrm{d}s.
\end{cases}
\end{align*}
Then let us show that $A_n \underset{n\to+\infty}{\longrightarrow}0$. After integrating by parts, we get
\[A_n=\int_0^{2\pi}\int_{\mathbb{R}}\left(\frac{v^2}{2}+\bar{\phi}(\theta)\right)(f_n(\theta,v)-f_0^{*\bar{\phi}}(\theta,v))\mathrm{d}\theta\mathrm{d}v=\mathcal{H}(f_n)-\mathcal{H}(f_0^{*\bar{\phi}})+\frac{1}{2}\Vert \phi_n'-\bar{\phi}'\Vert_{\dL^2}^2.\]
We have seen in Step 1 of the proof of Theorem \ref{Existence3} in Section \ref{existence_limite_faible} that $\ch(f_n)-\ch(f_0^{*\bar{\phi}})$ converges to 0 and $\Vert \phi_n'-\bar{\phi}'\Vert_{\dL^2}^2\underset{n\to+\infty}{\longrightarrow}0$; therefore $A_n\underset{n\to+\infty}{\longrightarrow}0$. Finally let us show that $B_n\underset{n\to+\infty}{\longrightarrow}0$. We have the following inequality using inequality (\ref{inegalite_aphi})
\vspace{-0.2cm}
\begin{align*}
B_n &=\int_0^{+\infty}[a_{\bar{\phi}}^{-1}(2\mu_{f_0}(s))\beta_{f_n^*,f_0^*}(s)-a_{\bar{\phi}}^{-1}(\mu_{f_0}(s))\beta_{f_0^*,f_n^*}(s)]\mathrm{d}s\\
&\leq \int_0^{+\infty}\left(\frac{4\mu_{f_0}(s)^2}{32\pi^2}+\max \bar{\phi}\right)\beta_{f_n^*,f_0^*}(s)-\left(\frac{\mu_{f_0}(s)^2}{32\pi^2}+\min \bar{\phi}\right)\beta_{f_0^*,f_n^*}(s)\mathrm{d}s.
\end{align*}
Using the following identity, see the proof of Proposition 4.1 in \cite{Stability_HMFcos},
\[\int_0^{+\infty}\beta_{f_0^*,f_n^*}(s)\mathrm{d}s+\int_0^{+\infty}\beta_{f_n^*,f_0^*}(s)\mathrm{d}s=\Vert f_n^*-f_0^*\Vert_{\dL^1},\] 
we get
\vspace{-0.2cm}
\[B_n\leq \frac{1}{8\pi^2}\int_0^{+\infty}\mu_{f_0}(s)^2\beta_{f_n^*,f_0^*}(s)\mathrm{d}s+(\max \bar{\phi}+\min \bar{\phi})\int_0^{+\infty}\beta_{f_n^*,f_0^*}(s)\mathrm{d}s-\min \bar{\phi}\Vert f_n^*-f_0^*\Vert_{\dL^1}.\]
Notice that $\min\bar{\phi}\Vert f_n^*-f_0^*\Vert_{\dL^1}\underset{n\to+\infty}{\longrightarrow}0$ since $(f_n)_n$ is a minimizing sequence of (\ref{H0}). Besides 
\vspace{-0.3cm}
\begin{align*}
(\max\bar{\phi}+\min\bar{\phi})\int_0^{+\infty}\beta_{f_n^*,f_0^*}(s)\mathrm{d}s&\leq \max \bar{\phi}\int_0^{+\infty}\beta_{f_n^*,f_0^*}(s)\mathrm{d}s\\
&\leq \max \bar{\phi}\int_0^{+\infty}(f_n^*-f_0^*)_+\mathrm{d}s\\
&\leq \max \bar{\phi} \Vert f_n^*-f_0^*\Vert_{\dL^1}\underset{n\to+\infty}{\longrightarrow}0.
\end{align*}
Finally, let us prove that
\[\frac{1}{8\pi^2}\int_0^{+\infty}\mu_{f_0}(s)^2\beta_{f_n^*,f_0^*}(s)\mathrm{d}s\underset{n\to+\infty}{\longrightarrow}0.\] 
First notice that $\beta_{f_n^*,f_0^*}(s)\underset{n\to+\infty}{\longrightarrow}0$. Indeed we shall apply the dominated convergence theorem to $\beta_{f_n^*,f_0^*}(s)=\iint \mathds{1}_{\{f_n^*(\theta,v)\leq s <f_0^*(\theta,v)\}}\mathrm{d}\theta\mathrm{d}v$ for $s>0$. We first have
\begin{itemize}
\item $\mathds{1}_{\{f_n^*(\theta,v)\leq s <f_0^*(\theta,v)\}}\underset{n\to+\infty}{\longrightarrow}\mathds{1}_{\{f_0^*(\theta,v)\leq s <f_0^*(\theta,v)\}}$ a.e. since $f_n^*\underset{n\to+\infty}{\longrightarrow}f_0^*$ strongly in $\dL^1([0,2\pi]\times\br)$,
\item $\mathds{1}_{\{f_n^*(\theta,v)\leq s <f_0^*(\theta,v)\}}\leq \mathds{1}_{\{s <f_0^*(\theta,v)\}}$. But $\iint  \mathds{1}_{\{s <f_0^*(\theta,v)\}}\mathrm{d}\theta\mathrm{d}v=\mu_{f_0}^*(s)=\mu_{f_0}(s)<\infty$ since $f_0\in \dL^1([0,2\pi]\times\br)$.
\end{itemize}
Hence by the dominated convergence theorem, we get for all $s>0$, $\beta_{f_n^*,f_0^*}(s)\underset{n\to+\infty}{\longrightarrow}0$. For $s=0$, $\beta_{f_n^*,f_0^*}(0)=\vert \varnothing \vert =0$, thus for all $s\geq 0$, $\beta_{f_n^*,f_0^*}(s)\underset{n\to+\infty}{\longrightarrow}0$. There remains to dominate the term $\mu_{f_0}(s)^2\beta_{f_n^*,f_0^*}(s)$. Notice that $\mu_{f_0}(s)^2\beta_{f_n^*,f_0^*}(s)\leq \mu_{f_0}(s)^3$. However we have
\[\int_0^{+\infty}s^2f_0^{\#}(s)\mathrm{d}s=\int_0^{+\infty}\left(\int_{0\leq s<\mu_{f_0}(t)}s^2\mathrm{d}s\right)\mathrm{d}t=\frac{1}{3}\int_0^{+\infty}\mu_{f_0}(t)^3\mathrm{d}t.\]
So to prove the integrability of $s\to \mu_{f_0}(s)^3$, it is sufficient to show that $\int_0^{+\infty}\!\!s^2f_0^{\#}(s)\mathrm{d}s\!<\!\infty$. Using equality (\ref{inegalite_aphi}), identity $\int f_0^{\#}(s)\mathrm{d}s=\Vert f_0\Vert_{\dL^1}$ and Lemma \ref{classic_egalite}, we get 
\begin{align*}
\int_0^{+\infty}s^2 f_0^{\#}(s)\mathrm{d}s &\lesssim \int_0^{+\infty}(a_{\bar{\phi}}^{-1}(s)+1)f_0^{\#}(s)\mathrm{d}s\\
&=\int_0^{+\infty}a_{\bar{\phi}}^{-1}(s)f_0^{\#}(s)\mathrm{d}s+\Vert f_0\Vert_{\dL^1},\\
&=\iint \left(\frac{v^2}{2}+\bar{\phi}(\theta)\right)f_0^{*\bar{\phi}}(\theta,v)\mathrm{d}\theta\mathrm{d}v+\Vert f_0\Vert_{\dL^1}<+\infty
\end{align*}
since $f_0^{*\bar{\phi}}$ satisfies $H_0=\mathcal{H}(f_0^{*\bar{\phi}})$ and $f_0\in \dL^1([0,2\pi]\times\br)$. Hence $\int_0^{+\infty}\mu_{f_0}(t)^3\mathrm{d}t<+\infty$. We conclude by dominated convergence that 
\vspace{-0.1cm}
\[\int_0^{+\infty}\mu_{f_0}(s)^2\beta_{f_n^*,f_0^*}(s)\mathrm{d}s\underset{n\to+\infty}{\longrightarrow}0.\] 
Therefore $B_n\underset{n\to+\infty}{\longrightarrow}0$ and the lemma is proved.
\end{proof}
\subsubsection{Expression of the minimizers}

From the proof of compactness of minimizing sequences in Section \ref{OS3}, we can deduce the expression of the steady states of (\ref{HMF}) which minimizes (\ref{H0}). Indeed, we have proved that any minimizing sequences $(f_n)_n$ converge to a minimizer $\bar{f}$ in $\ce$ which satisfies $\bar{f}=f_0^{*\bar{\phi}}$. Hence any minimizer $\bar{f}$ of (\ref{H0}) has the following expression:

\[\bar{f}=f_0^{\#}\left(a_{\bar{\phi}}\left(\frac{v^2}{2}+\bar{\phi}(\theta)\right)\right).\]

\appendix
\phantomsection
\renewcommand{\thesection}{\Alph{section}}
\renewcommand{\thesubsection}{\Alph{section}.\arabic{subsection}}
\renewcommand{\theequation}{\Alph{section}.\arabic{equation}}
\section{}

\begin{proof}[Proof of Lemma \ref{sci_bis}] 
Let $(f_n)_n$ be a sequence of nonnegative functions converging weakly in $\dL^1([0,2\pi]\times\br)$ to $\bar{f}$ such that $\Vert f_n\Vert_{\dL^1}=M$, $\Vert v^2 f_n\Vert_{\dL^1}\leq C_1$ and $\iint f_n\ln(f_n)\leq C_2$ where $M$, $C_1$ and $C_2$ do not depend on $n$. Let $\lambda\in\br_+$ and $f_1(\theta,v)=e^{-\vert v\vert}$, we have
\vspace{-0.1cm}
\begin{align*}
\iint f_n\ln(f_n) &=\iint f_n\ln\left(\frac{f_n}{\lambda f_1}\right)+\ln(\lambda)\iint f_n+\iint f_n\ln(f_1)\\
&=\iint_{\{0\leq f_n\leq \lambda f_1\}} f_n\ln\left(\frac{f_n}{\lambda f_1}\right)+\iint \left(f_n\ln\left(\frac{f_n}{\lambda f_1}\right)\right)_+ +\ln(\lambda)M+\iint f_n\ln(f_1).\\
\end{align*}
\vspace{-0.2cm}
First by using the lower semicontinuity properties of convex positive functions, we get
\vspace{-0.1cm}
\[\underset{n\to+\infty}{\liminf}\iint \left(f_n\ln\left(\frac{f_n}{\lambda f_1}\right)\right)_+\geq \iint \left(\bar{f}\ln\left(\frac{\bar{f}}{\lambda f_1}\right)\right)_+.\]
At this stage, we have the following identity
\begin{align}\label{A1}
\underset{n\to+\infty}{\liminf}\iint f_n\ln(f_n)\geq &\left[\iint \left(\bar{f}\ln\left(\frac{\bar{f}}{\lambda f_1}\right)\right)_+ +\ln(\lambda)M\right]+\underset{n\to+\infty}{\liminf}\iint f_n\ln(f_1)\\
&+\underset{n\to+\infty}{\liminf}\iint_{\{0\leq f_n\leq \lambda f_1\}} f_n\ln\left(\frac{f_n}{\lambda f_1}\right).\nonumber
\end{align}
Let us then show that 
\vspace{-0.3cm}
\begin{equation}\label{limite}
\lim_{\lambda\to 0}\sup_n\left\vert\iint_{\{0\leq f_n\leq \lambda f_1\}} f_n\ln\left(\frac{f_n}{\lambda f_1}\right)\right\vert=0.
\end{equation}
This term can be written as
\[
\iint_{\{0\leq f_n\leq \lambda f_1\}} f_n\ln\left(\frac{f_n}{\lambda f_1}\right)=\iint_{\{0\leq f_n\leq \lambda f_1\}} f_n\ln\left(\frac{f_n}{ f_1}\right)-\ln(\lambda)\iint_{\{0\leq f_n\leq \lambda f_1\}} f_n=T_1+T_2.
\]
We have $\vert T_2\vert \leq \lambda\vert \ln(\lambda)\vert M_1 \underset{\lambda\to 0}{\longrightarrow}0 \text{ uniformly in } n$ where $M_1=\Vert f_1\Vert_{\dL^1}$. Since for $\lambda$ sufficiently small, the function $x\to x\vert\ln(x)\vert$ is increasing on $[0, \lambda f_1]$, we have for $T_1$
\begin{align*}
\vert T_1\vert &\leq \iint_{\{0\leq f_n\leq \lambda f_1\}}f_n\vert \ln(f_1)\vert+\iint_{\{0\leq f_n\leq \lambda f_1\}}f_n\vert \ln(f_n)\vert\\
&\leq \lambda\iint f_1\vert \ln(f_1)\vert+\lambda\iint f_1\vert\ln(\lambda f_1)\vert \leq 2\lambda\iint f_1\vert \ln(f_1)\vert+\lambda\vert\ln(\lambda)\vert M_1.
\end{align*}
Clearly, we have $\iint f_1\vert\ln(f_1)\vert<+\infty$ so $\vert T_1\vert \underset{\lambda\to 0}{\longrightarrow}0$ uniformly in $n$. So far, we have
\begin{equation}\label{w1}
\underset{n\to+\infty}{\liminf}\iint f_n\ln(f_n)\geq \underset{\lambda\to 0}{\lim}\left[\iint\left(\bar{f}\ln\left(\frac{\bar{f}}{\lambda f_1}\right)\right)_+ + \ln(\lambda)M\right]+\underset{n\to+\infty}{\liminf}\iint f_n\ln(f_1).
\end{equation}
The next step is to show that $\underset{\lambda\to 0}{\lim}\left[\iint\left(\bar{f}\ln\left(\frac{\bar{f}}{\lambda f_1}\right)\right)_+ +\ln(\lambda)M\right]=\iint \bar{f}\ln\left(\frac{\bar{f}}{f_1}\right).$
We have
\vspace{-0.5cm}
\begin{equation}\label{w2}
\left\vert\!\iint\!\!\left(\bar{f}\ln\left(\frac{\bar{f}}{\lambda f_1}\right)\!\!\!\right)_+ \!\!\!\!\!+\ln(\lambda)M-\!\!\iint \bar{f}\ln\left(\frac{\bar{f}}{f_1}\right)\right\vert\leq \left\vert\iint_{\{\bar{f}\geq \lambda f_1\}}\!\!\!\!\bar{f}\ln\left(\frac{\bar{f}}{f_1}\right)\!-\!\iint \bar{f}\ln\left(\frac{\bar{f}}{f_1}\right)\right\vert+\lambda\vert\ln(\lambda)\vert M_1.
\end{equation}
\vspace{-0.1cm}
Let us show, using the dominated convergence theorem, that the first term of (\ref{w2}) converges to 0 when $\lambda$ goes to 0. The term $\bar{f}\ln(\frac{\bar{f}}{f_1})\mathds{1}_{\{\bar{f}\geq \lambda f_1\}}$ clearly converges to $\bar{f}\ln(\frac{\bar{f}}{f_1})$. So it remains to show that $\iint\vert \bar{f}\ln(\frac{\bar{f}}{f_1})\vert\dd\theta\dd v<+\infty$. We have
\vspace{-0.2cm}
\begin{align*}
\iint\left\vert \bar{f}\ln\left(\frac{\bar{f}}{f_1}\right)\right\vert\dd\theta\dd v&\leq \iint\vert\bar{f}\ln(\bar{f})\vert\dd\theta\dd v+\iint \vert \bar{f}\ln(f_1)\vert\dd\theta\dd v\\
&\leq \iint\vert\bar{f}\ln(\bar{f})\vert\dd\theta\dd v+ M+\Vert v^2\bar{f}\Vert_{\dL^1}.
\end{align*}
It is well-knwon, see \cite{Villani_Desvillettes}, that for $\bar{f}\geq 0$, if $\Vert \bar{f}\Vert_{\dL^1}\!\!<\!\!+\infty$, $\Vert v^2\bar{f}\Vert_{\dL^1}\!\!<\!\!+\infty$, $\vert\!\iint\!\! \bar{f}\ln(\bar{f})\dd\theta\dd v\vert\!\!<\!\!+\infty$, we have $\iint\vert\bar{f}\ln(\bar{f})\vert\dd\theta\dd v<+\infty$. We already have that $\Vert \bar{f}\Vert_{\dL^1}<+\infty$, $\Vert v^2\bar{f}\Vert_{\dL^1}<+\infty$, so let us show that $\vert \iint \bar{f}\ln(\bar{f})\dd\theta\dd v\vert<+\infty$. Thanks to Jensen's inequality (\ref{jensen}), we have
\vspace{-0.1cm}
\[\iint \bar{f}\ln(\bar{f})\dd\theta\dd v\geq M(\ln(M)-\ln(M_1))-\iint \vert v\vert\bar{f}>-\infty.\]
By hypothesis, we know that $\underset{n\to + \infty}{\liminf}\iint f_n\ln(f_n)\dd\theta\dd v\leq C_2$ and with inequality (\ref{A1}) and limit (\ref{limite}), we get for all $\lambda \in\br_+$

\[ C_2\geq \iint_{\{\bar{f}\geq \lambda f_1\}}\bar{f}\ln(\bar{f})\dd\theta\dd v+\ln(\lambda)\iint_{\{\bar{f}\leq \lambda f_1\}}\bar{f}\dd\theta\dd v-\iint \vert v\vert \bar{f}.\]
The two last terms are bounded so $\iint_{\{\bar{f}\geq \lambda f_1\}}\bar{f}\ln(\bar{f})\dd\theta\dd v$ is bounded from above et we deduce that $\iint \bar{f}\ln(\bar{f})\dd\theta\dd v$ is bounded from above. So the dominated convergence theorem gives the limit. Then the second term of (\ref{w2}) clearly converges to 0. So
\[\underset{n\to+\infty}{\liminf}\iint f_n\ln(f_n)\geq\iint \bar{f}\ln(\bar{f})+\underset{n\to+\infty}{\liminf}\iint (f_n-\bar{f})\ln(f_1).\]
To conclude, it is sufficient to show that $\iint (f_n-\bar{f})\ln(f_1)\underset{n\to +\infty}{\longrightarrow}0$. Let $\varepsilon>0$ and $R>0$ such that $\frac{2C_1}{R}\leq \varepsilon$, we have
\vspace{-0.1cm}
\begin{align*}
\left\vert \iint (f_n-\bar{f})\ln(f_1)\right\vert&\leq \left\vert \iint_{\{\vert v\vert \leq R\}}(f_n-\bar{f})\vert v\vert\dd\theta\dd v\right\vert+\iint_{\{\vert v\vert>R\}}(f_n+\bar{f})\vert v\vert\dd\theta\dd v\\
&\leq \left\vert \iint_{\{\vert v\vert \leq R\}}(f_n-\bar{f})\vert v\vert\dd\theta\dd v\right\vert+\frac{1}{R}\iint v^2(f_n+\bar{f})\dd\theta\dd v\\
&\leq \left\vert \iint_{\{\vert v\vert \leq R\}}(f_n-\bar{f})\vert v\vert\dd\theta\dd v\right\vert+\frac{2C_1}{R}.
\end{align*}

\noindent The first term converges to 0 when $n$ goes to infinity thanks to the weak convergence in $\dL^1([0,2\pi]\times\br)$ of $f_n$ to $\bar{f}$ and $R$ is chosen such that the second term is smaller than $\varepsilon$.

\end{proof}

\subsection*{Acknowledgements}
The authors acknowledge supports from the ANR project Moonrise ANR-14-CE23-0007-01,  and from the INRIA project IPSO.

\bibliographystyle{plain}
\bibliography{biblio-article}

\end{document}